\newcommand{\bU}{\boldsymbol{U}}
\newcommand*\bigcdot{\mathpalette\bigcdot@{.5}}
\newcommand*\bigcdot@[2]{\mathbin{\vcenter{\hbox{\scalebox{#2}{$\m@th#1\bullet$}}}}}
\DeclareMathOperator{\diam}{diam}
\DeclareMathOperator{\tw}{tw}
\DeclareMathOperator{\ltw}{ltw}
\newcommand{\N}{{{\mathbb{N}}}}
\newcommand{\m}{{\cal{M}}}
\newcommand{\f}{{\cal{F}}}
\newcommand{\W}{{\cal{W}}}
\newcommand{\dd}{{\cal{D}}}
\newcommand{\CC}{{\cal{C}}} 
\newcommand{\CT}{{\cal{T}}} 
\newcommand{\numOfV}{{N}}
\title{FPT Algorithms for Finding Near-Cliques in $c$-Closed Graphs} 
\author{Balaram Behera}{Georgia Institute of Technology, Atlanta, USA}{balaramdb@gatech.edu}{}{}
\author{Edin Husi\' c}{London School of Economics and Political Science, UK}{e.husic@lse.ac.uk}{https://orcid.org/0000-0002-6708-5112
}{}
\author{Shweta Jain}{University of Illinois, Urbana-Champaign, USA}{shwetaj@illinois.edu}{}{}
\author{Tim Roughgarden}{Columbia University, New York, USA}{tr@cs.columbia.ed}{}{Supported in part by  NSF Award CCF-1813188
 and ARO grant W911NF1910294.}
\author{C. Seshadhri}{University of California, Santa Cruz, USA}{sesh@ucsc.edu}{}{Supported by NSF DMS-2023495, CCF-1740850, 1839317, 1813165, 1908384, 1909790, and ARO Award W911NF1910294.}
\authorrunning{Behera, Husi\' c, Jain, Roughgarden, and Seshadhri} 
\keywords{$c$-closed graph, dense subgraphs, FPT algorithm, enumeration algorithm, $k$-plex, Moon-Moser theorem} 
\begin{document}

\maketitle
\begin{abstract}
Finding large cliques or cliques missing a few edges is a fundamental algorithmic task in the
study of real-world graphs, with applications in community detection, pattern recognition, and clustering. A number of effective backtracking-based heuristics for these problems have emerged from recent empirical work in social network analysis.
Given the $\mathbb{NP}$-hardness of variants of clique counting,
these results raise a challenge for \emph{beyond worst-case analysis} of these problems.
Inspired by the triadic closure of real-world graphs,
Fox et al. (SICOMP 2020) introduced the notion of $c$-closed graphs and proved
that maximal clique enumeration is fixed-parameter tractable with respect to $c$.

In practice, due to noise in data, one wishes to actually discover "near-cliques",
which can be characterized as cliques with a sparse subgraph removed.
In this work, we prove that many different kinds of maximal near-cliques
can be enumerated in polynomial time (and FPT in $c$) for $c$-closed graphs.
We study various established notions of such substructures, including $k$-plexes, complements of bounded-degeneracy and bounded-treewidth graphs.
Interestingly, our algorithms follow relatively simple backtracking procedures, analogous to what is done in practice. Our results underscore the significance of the $c$-closed graph class for theoretical understanding of social network analysis.
\end{abstract}

\newpage
\setcounter{page}{1}
\section{Introduction}
\label{section:intro}

The discovery of cliques and clique-like subgraphs is a fundamental tool in modern graph analysis, especially
for social networks. Such substructures have been used in many different applications including  community detection in social networks ~\cite{kumar1999trawling,sozio2010community}, identification of real-time stories in the news~\cite{angel2012dense} and even detection of regulatory motifs in DNA ~\cite{fratkin2006motifcut}. They have been used for graph visualization~\cite{zhang2012extracting,zhao2012large} and for creating index structures
for answering reachability and distance queries in databases~\cite{cohen2003reachability, jin20093}.

In practice, due to noise in data, one is also interested in large "near-cliques". While this is an ill-defined term, applications require cliques that are missing a small sparse subgraph. For example, incomplete cliques have been used to predict missing pairwise interactions ~\cite{yu2006predicting} and for identifying functional groups~\cite{han2007identifying} in a protein interaction network. They have been used for community detection~\cite{zhu2020community} and for detecting test collusion ~\cite{belov2021graph}. Recent works have used the fraction of near-cliques to $k$-cliques to define higher
order variants of clustering coefficients~\cite{yin2017local}. A common notion is that of $k$-plexes (a clique minus a subgraph with degree bound $k$). They have been used in community detection ~\cite{wang2017query, bacciu2021k}, for partitioning of sparse biological networks~\cite{grbic2019variable}, and  for determining molecular similarity~\cite{hernandez2016novel}. 

From a worst-case standpoint, even the simpler problem of maximum clique is a notoriously difficult computational problem. Even getting $O(n^{1-\delta})$-approximations is $\mathbb{NP}$-hard~\cite{haastad1999clique, zuckerman2006linear}, and it is hard to non-trivially approximate even with algorithm parameterized by solution size~\cite{chalermsook2017gap}.
On the other hand, there have been many recent successes in clique enumeration/approximation in the data mining community~\cite{jain2020power,jain2020provably,li2020ordering,danisch2018listing,chen2018mining,dhulipala2021parallel,jain2017fast}. Many of these results employ backtracking heuristics~\cite{jain2020power,jain2017fast,danisch2018listing,dhulipala2021parallel}. These algorithms can even get the exact maximum clique for graphs
with millions of edges. Moreover, the basic backtracking techniques work for approximating counts of cliques missing a few edges~\cite{jain2020provably,wu2007parallel,bentert2019listing,song2015method}. 

This gap between theory and practice is the main focus of our work. \emph{Can we prove the existence of efficient (hopefully, backtracking) algorithms for near-clique discovery, assuming the input has "reasonable" properties of social networks?}

The starting point for our work is the recent notion of \emph{$c$-closed graphs}, defined by Fox et al.~\cite{DBLP:conf/icalp/FoxRSWW18,fox2020finding}. 
Triadic closure -- the property that friends of friends are often friends -- is a well-observed property of social networks. A $c$-closed graph has the property that two vertices sharing at least $c$ common neighbors are connected by an edge. Fox et al. empirically show that real-world social network are often (or approximately) $c$-closed for small values of $c$. Theoretically, they proved that maximal clique
enumeration can be done in time $2^{O(c)}n^2$, and is hence fixed parameter tractable (FPT) in $c$. (The basic brute force algorithm can be shown to run in $O(n^{c})$ time.)

\subsection{Main results} \label{sec:mainresult}

Our focus is on counting the number of maximal near-clique structures, which we can roughly
define as "a clique minus a sparse subgraph", or alternately, the complement of
a sparse subgraph. The input graph $G$ has $n$ vertices, $m$ edges, and is assumed to be $c$-closed.

We define the various pattern subgraphs that will be counted.
We begin with the classic notion of a $(d+1)$-plex.

\begin{definition}[$(d+1)$-plex,~\cite{seidman1978graph}]
  A subset of vertices $S$ is called a \emph{$(d+1)$-plex} 
  if each $v\in S$ is adjacent to all but at most $d$ vertices of $S$ (excluding itself).
\end{definition}

Observe that a $(d+1)$-plex is precisely the complement of a graph with maximum degree at most $d$. 
Our first result is that enumerating maximal $(d+1)$-plexes (for constant $d$) in an input $c$-closed
graph is FPT in $c$. 

\begin{theorem}\label{thm:fasterBoundedDegree}
	For $c$-closed graphs and a fixed $d \ge 0$, 
	there is an algorithm running in time $O(n^{2d} \cdot \kappa_d^{c} \cdot p(c))$ for enumerating $(d+1)$-plexes, 
	where $\kappa_d < 2$ is the root of $x^{d+4} - 2x^{d+3} + 1=0$; and for a polynomial $p$.
 	For $2$-plexes, a stronger bound $O(n^{2} \cdot 10^{c/5} \cdot p(c))$ applies.
\end{theorem} 

We go further and show analogous results for other patterns that can be expressed as complements
of sparse graphs. A pattern has \emph{bounded co-degeneracy} if the degeneracy of the complement
is bounded. The degeneracy can be thought of as a more robust notion of maximum degree, and
has
a significant role in social network analysis. Bounded co-degenerate graphs are 
a natural generalization of $(d+1)$-plexes. Analogously, we also consider counting maximum
\emph{bounded co-treewidth} graphs. 

\begin{theorem} \label{thm:max-degen} For $c$-closed graphs and a fixed $d \ge 0$, 
	there is an algorithm running in time $O(n^{2d+4}4^c)$ that outputs 
all maximal induced subgraphs with co-degeneracy $d$ in an input $c$-closed graph.
\end{theorem}

\begin{theorem}\label{thm:max-treewidth}
	For $c$-closed graphs and a fixed $t \ge 0$, there is an algorithm running in time $O(n^{t+4}2^{2c})$ that outputs all maximal induced subgraphs with co-treewidth $\le t$.
\end{theorem}

The exponential dependence $n^d$ in Theorems~\ref{thm:fasterBoundedDegree} and~\ref{thm:max-degen} is necessary, as is the dependence $n^t$ in Theorem~\ref{thm:max-treewidth} as we show with examples.

We note that not all natural notions of ``co-sparse'' subgraphs lead
to FPT bounds.  For example,
the maximal subgraphs with bounded \emph{average} co-degree cannot be
listed by an FPT algorithm, even for average co-degree of at most~2.

\begin{example}\label{example1}
  Let $ \ell \in \N$ and let $c = \frac{\ell}{2}(\ell+1) + 1$.  By the
  hand-shaking lemma, a subgraph $G[S]$ has average co-degree at most
  $2$ if and only if $G[S]$ contains at most $|S|$ non-edges.
  Consider a graph $G$ consisting of a clique $K$ on $c-1$ vertices
  and an independent set $I$ on $n$ vertices, where any vertex in $I$
  is adjacent to every vertex in $K$. $G$ is $c$-closed since
  any two non-adjacent vertices are adjacent only to $K$.  Note that
  $G$ contains exactly $n + c-1$ vertices.

  Let us show that the number of maximal subgraphs $G[S]$ with at most
  $|S|$ non-edges is at least~$n^{\ell}$ and hence not FPT with
  respect to~$c$.  In particular, consider a set of the form $S\cup K$
  where $S\subseteq I$.  If $|S| = s$, then the number of non-edges in
  $G[S\cup K]$ is exactly $s(s-1)/2$.  By the choice of~$c$, any set
  $S$ of size $\ell$ is a maximal subgraph with at most $|S|$
  non-edges.  Thus, there are at least
  $O(n^{\ell}) \approx O(n^{\sqrt{2c}})$ maximal subgraphs $G[S]$ with
  at most $|S|$ non-edges.
\end{example}

{\em The backtracking connection:} One of the first steps in proving the above theorems
is a different, simpler proof that maximal clique enumeration is FPT in $c$. (This
is the main result of Fox et al.~\cite{fox2020finding}.) Typical backtracking algorithms exhaustively and incrementally build candidates for solutions until they have discovered all candidates. We analyze a simple
backtracking procedure that finds cliques (Section~\ref{section:cliques}), and give a bound on its running time. 
Moreover, we use this result to show that maximal bounded co-degenerate subgraphs
can be enumerated efficiently.
We consider these proofs as mathematical justification for the empirical success of backtracking algorithms, 
and see our results as ``beyond the worst-case analysis'' results~\cite{roughgarden_2021}.

\smallskip
{\em Organization:} In Section~\ref{section:results}, we describe our results in more detail. Section~\ref{section:related} covers related work. Section~\ref{section:preliminaries} describes the definitions and terms required for the proofs. Sections~\ref{section:cliques},~\ref{section:plexes}, ~\ref{section:degeneracy} and~\ref{section:treewidth} respectively gives proofs for FPT bounds for cliques, $(d+1)$-plexes, bounded co-degeneracy and bounded co-treewidth graphs.

\subsection{Discussion of results}
\label{section:results}

\smallskip \noindent \textbf{Cliques} We first provide a simple proof that uses a backtracking tree to show that the number of maximal cliques is bounded by $O(cm2^c)$ where $m$ represents the number of edges in the complement graph. (Fox et al. prove a bound of $\min{\{3^{(c-1)/3}n^2,4^{(c+4)(c-1)/2}n^{2-2^{1-c}}\}}$). We convert this result into a simple backtracking algorithm for enumerating maximal cliques that runs in time $O(cmn^22^c)$. Although the running time bound we obtain is slightly worse than that of Fox et al., the algorithm and proof are simpler, in particular, as  Fox et al. black-box clique enumeration. We also believe that our proof provides theoretical understanding for the practical efficiency of common backtracking methods, such as the Bron-Kerbosch algorithm \cite{bron1973algorithm}
and a recent work of Jain-Seshadhri~\cite{jain2020power}.

\noindent\textbf{Two approaches} For the other dense subgraph types, we do the following: for each type, we provide structural results bounding the maximum possible number of maximal subgraphs of that type. 
Our results come in two flavors. In one flavor, the \emph{backtracking approach}, we show that any subgraph of that type can be split into parts which are either bounded in size or are cliques. 
For parts that are cliques, we use the simple backtracking algorithm for counting cliques mentioned above. For parts that are not cliques (and are thus bounded in size), we simply find candidate vertices for each part, enumerate all subsets of these candidate sets and combine them to give a set of subgraphs that is a superset of the set of all maximal subgraphs of that type (for cliques, $k$-plexes and co-degenerate subgraphs, there exist simple tests for checking if a subgraph is a maximal subgraph of that type).  Because the parts are of bounded size, we get FPT bounds for the size of this superset. 
In some cases (cliques and $d+1$-plexes), this approach leads to slightly worse exponential factors than bounds obtained using the second approach, but leads to simple algorithms that are easy to describe. Indeed, the enumeration algorithms follow from the structural results; obtaining the structural results is the main challenge. Interestingly, the algorithms obtained using this approach have significant portions that use backtracking, reflecting the fact that backtracking has proven to be effective in practice. 

In the other flavor, the \emph{three-step  approach}, we use the approach taken by Fox et al. for proving their result for maximal cliques. We view their proof as being composed
of three parts.
The first part uses a combinatorial bound on the number of maximal
cliques, the classic Moon-Moser theorem~\cite{moonmoser,millerMuller}.
This theorem states that the number of maximal cliques in an arbitrary
$\numOfV$-vertex graph is bounded above by $3^{N/3}$ (with a matching
lower bound furnished by a complete $(N/3)$-partite graph).  

The second and most interesting part of the proof exploits the
$c$-closed condition to translate the Moon-Moser theorem into an FPT
bound of at most $n^2 3^{(c-1)/3}$ maximal cliques in a $c$-closed
graph with $n$ vertices.  Roughly, this step of the proof works as
follows.  For (almost) every maximal clique, one can identify two
non-adjacent vertices such that the clique is contained in the common
neighborhood of the two vertices.  Such a maximal clique in the
original graph is also maximal in an induced subgraph on at most $c-1$
vertices, by the $c$-closure property.  The upper bound follows by
applying the Moon-Moser theorem to these subgraphs (of which there is a
polynomial number), each of size at most $c-1$.

The third step is to translate the FPT combinatorial bound on the
number of maximal cliques into an FPT algorithm for enumerating them.
For the case of cliques, there is a well known
algorithm~\cite{tsukiyama1977new} that can be used to list all maximal
cliques in $O(mn)$ time per
clique.\footnote{Replacing the Moon-Moser
  bound with the trivial bound of~$2^N$ would also lead to an FPT
  result, albeit one that is exponentially worse.
  Fox et al.~\cite{DBLP:conf/icalp/FoxRSWW18,fox2020finding} also prove an
  incomparable bound with better dependence on~$n$ ($n^{2-2^{1-c}}$)
  but worse dependence on~$c$ ($4^{(c+4)(c-1)/2}$).}

Thus, for proofs using the three-step approach, we use the same three-part
framework outlined above for the special case of cliques:
\begin{enumerate}

	\item \textbf{Combinatorial bound:} Find an upper bound on the
          number of maximal dense subgraphs in an arbitrary
          $\numOfV$-vertex graphs, in the spirit of the Moon-Moser theorem.
(Either relying on an existing
          bound or proving a new one from scratch.)

	\item \textbf{FPT bound:} Exploit the $c$-closed condition to
          translate the combinatorial bound into an FPT-type upper bound (with parameter $c$) on the number of maximal dense subgraphs 
	in a $c$-closed graph on $n$ vertices.

	\item \textbf{Enumeration:} Give an FPT enumeration algorithm for
          listing all maximal dense subgraphs in a $c$-closed graph.
          (Either relying on an existing enumeration algorithm or
          devising a new one.)

\end{enumerate}

We describe our contributions in more detail below:

\smallskip \noindent \textbf{$(d+1)$-plexes}\footnote{Similar result for $(d+1)$-plexes was proved independently and concurrently with the previous version of this paper by Koana,
Komusiewicz, and Sommer~\cite{KKS}. 
The results in~\cite{KKS,koana_et_al:LIPIcs:2020:13364} apply more generally to the class of
weakly $c$-closed graphs defined in
\cite{DBLP:conf/icalp/FoxRSWW18,fox2020finding}
(The paper~\cite{KKS} also includes several results showing polynomial-size
kernels for various problems in weakly $c$-closed graphs, an important
direction that is not pursued here.)} A subset $S\subseteq V(G)$ is called a {\em $(d+1)$-plex} if every
vertex $v\in S$ is non-adjacent to at most $d$ other vertices in $S$.
Equivalently, a subset $S$ is a $(d+1)$-plex if $G[S]$ has co-degree
at most $d$. Thus, a clique is $1$-plex.  This is a common relaxation of
cliques used in practice~\cite{fortunato2010community, seidman1978graph}.  For each
fixed~$d$, we give an FPT algorithm for enumerating {$(d+1)$-plexes}. In general graphs, an FPT algorithm for finding a largest $(d+1)$-plex is impossible (assuming P $\neq$ NP)
\cite{lewis1980node}.

For the backtracking approach, we show that every maximal $(d+1)$-plex is either a maximal clique, or contains a pair of non-adjacent vertices $(u,v)$ such that the $(d+1)$-plex can be split into two parts -- one part of size at most $2d-2$ consisting of vertices that are non-adjacent to either $u$ or $v$, and the other of size at most $c$ consisting of (a subset of) common neighbors of $u$ and $v$. Since the number of pairs of non-adjacent vertices in the given $c$-closed graph is equal to the number of edges in its complement graph, $m$, this gives the maximum number of maximal $(d+1)$-plexes as $O(mn^{2d-2}2^c)$ and the enumeration algorithm follows. 

For the three-step approach, we use $\m_d(\numOfV)$ -- the maximum number of maximal $(d+1)$-plexes in an
$\numOfV$ vertex graph.  (Equivalently, $\m_d(\numOfV)$ is the
number of maximal subgraphs of degree at most $d$ in an $\numOfV$
vertex graph.) For the combinatorial bound we need an upper bound on $\m_d(\numOfV)$. A recent result shows that for every fixed $d$ there
is a constant $\kappa_d < 2$ such that
$\m_d(\numOfV) \le \kappa_d^{\numOfV}$~\cite{zhou2020enumerating}.

 Determining a tight bound for $\m_d(\numOfV)$ appears to be 
challenging.  To the best of our knowledge, the only tight bound is
the Moon-Moser theorem stating that
$\m_0(\numOfV) \le 3^{\numOfV/3} \approx 1.442^{\numOfV}$.  One of our
contributions is to give a tight bound for $\m_1(\numOfV)$:
$\m_1(\numOfV) \le 10^{\numOfV/5} \approx 1.585^{\numOfV}$.  This
result is presented in Appendix~\ref{section:combinatorics}, and
requires a much more involved proof than the Moon-Moser theorem (see Appendix~\ref{sec:proofOfMoonMoser} for a short
proof of the Moon-Moser~theorem).\footnote{
  The induced subgraphs with maximum degree at most one are also called \emph{dissociation sets}~\cite{yannakakis1981node}. 
  Thus, we show that the number of maximal dissociation sets in an $\numOfV$-vertex graph is at most $10^{\numOfV/5}$.
}

In the second step of the three-step approach, we give an FPT bound with a smaller (than in the case of backtracking) exponential factor $O(n^{2d} \cdot \kappa_d^{c})$ using a more careful analysis of the structure of a $(d+1)$-plex.
(Example~\ref{examplePlexes} shows that the exponential dependence~$n^d$ is necessary.)  Moreover, using the tight bound for
$\m_1(\numOfV)$ we give a stronger bound $O(n^{2} \cdot 10^{c/5})$ for
the number of maximal $2$-plexes in a $c$-closed graph on $n$
vertices.  

To convert the tighter bound into an enumeration algorithm and complete the third step, the simplest approach is
to apply black-box one of the recent polynomial delay
algorithms for efficiently listing $(d+1)$-plexes~\cite{berlowitz2015efficient,cao2020enumerating}. 
E.g., Berlowitz et al.~\cite{berlowitz2015efficient} give an algorithm which enumerates all maximal $(d+1)$-plexes 
in time $O((d+1)^{2d+2} p(n))$ per maximal $(d+1)$-plex, where $p(n)$ is a polynomial in~$n$.
By the FPT bound, the enumeration algorithm runs in FPT time. 
However, we can obtain a better running time by translating our proof
of the FPT bound into a bespoke enumeration algorithm.

\smallskip \noindent \textbf{Bounded co-degeneracy}
We say that a graph has {\em co-degeneracy} at most $d$ if its
complement is $d$-degenerate.  (Recall that a graph is $d$-degenerate
if every induced subgraph has at least one vertex with degree at
most~$d$.)  In Section~\ref{section:degeneracy} we give, for each
fixed $d$, FPT algorithms for enumerating maximal subgraphs with
co-degeneracy at most $d$.

For the backtracking approach, we first show that every subgraph with bounded co-degeneracy is either a clique, or the degeneracy ordering of the complement of the subgraph contains an edge that splits the subgraph into three parts; two of whose sizes are bounded ($2d-2$ and $c$, respectively) and the third is a maximal independent set (in the complement graph) which can be discovered using the  algorithm for enumerating cliques. This gives a bound of $O(cm^2n^{2d}4^c)$ on the number of maximal subgraphs with co-degeneracy $d$ and an enumeration algorithm follows. 

For the three-step approach, for the combinatorial bound, we define $\dd_d(\numOfV)$ to be the
maximum number of maximal subgraphs with co-degeneracy at most $d$ in
an arbitrary $\numOfV$-vertex graph.  
For every fixed
$d$ there is a constant $\gamma_d<2$ such that
$\dd_d(\numOfV) \le \gamma_d^{\numOfV}$,
see~\cite{pilipczuk2012finding}.

For the FPT bound, we show that the number of maximal subgraphs with
co-degeneracy at most $d$ is at most
$O(n^{8d}\cdot \dd_d(2dc)) \le O(n^{8d}\cdot \gamma_d^{2dc})$.  The
idea is to show that there are two types of maximal subgraphs with
co-degeneracy at most $d$: either they have the structure of a
generalized co-star, or we can find $2d$ pairs of non-adjacent edges such
that the maximal subgraph is contained in the common neighborhoods of
these non-adjacent pairs and an additional $4d$ vertices.  Counting
generalized stars reduces to counting cliques, and we control the
other case using the $c$-closed condition.

An FPT algorithm is 
 obtained by applying the recent enumeration
algorithm~\cite{conte2019proximity} that lists all maximal subgraphs
with bounded degeneracy in time $O(mn^{d+2})$ per maximal subgraph.

\smallskip \noindent \textbf{Bounded co-treewidth}
A graph is said to have {\em co-treewidth} at most $t$ if its
complement has treewidth at most $t$.  The class of graphs with
co-treewidth at most $t$ is denoted by $\CT_t$.  In
Section~\ref{section:treewidth}, we give, for each fixed $t$, FPT
algorithms for enumerating {$\CT_t$-graphs} using (only) the three-step approach.

Obtaining non-trivial combinatorial bounds on the number of maximal
subgraphs with (co-)treewidth at most $t$ in an arbitrary $N$-vertex
graph is an open question in graph theory, so we use
the trivial upper bound of
$2^{\numOfV}$.  
(In any case, there are no known polynomial-delay algorithms for listing
subgraphs of bounded (co-)treewidth that would allow us to
algorithmically exploit (black-box) the savings that a better bound
would give us.)

For our FPT bound, we show that
for almost every maximal subgraph of bounded co-treewidth we can
either find two pairs of non-adjacent vertices and show that the
subgraph is contained in the common neighborhoods of these two pairs
(plus $t$ additional vertices), or else that the subgraph is a
generalized co-star.  In the former case we use the $c$-closure
condition and reduce the latter case to counting maximal cliques in
smaller graphs.  We show that there are $O(n^{t+4}2^{2c})$
maximal subgraphs with co-treewidth at most $t$.  Exponential
dependence $n^t$ is necessary, even when~$c=1$ (Example~\ref{exampleTW}).

While there are no known polynomial-delay enumeration algorithms for
listing maximal subgraphs of bounded \mbox{(co-)treewidth}, we show
how to turn our FPT bound into an FPT algorithm for enumerating {
  $\CT_t$-graphs}.

We also extend these results to the 
subgraphs of bounded
\emph{local} co-treewidth (Appendix~\ref{section:localTreeWidth}).

\subsection{Further related work}
\label{section:related}

\noindent \textbf{Polynomial-time solvable special cases of the 
{\sc Maximum Clique} problem and its generalizations in hereditary
graph classes}
The problems we consider generalize the fundamental {\sc Maximum
  Independent Set} and {\sc Maximum Clique} problems.  It is well
known that polynomial-time and fixed-parameter tractability results
for these problems require significant restrictions on the allowable
input graphs.
For example,
it is known that {\sc Maximum Independent Set} is NP-hard already for
subcubic graphs, and for $H$-free graphs (for $H$ connected) whenever
$H$ is not a path nor a subdivision of the claw ($K_{1,3}$)~\cite{alekseev1982effect}. 
Similarly, the problem is $W[1]$-hard when parameterized by the
solution size for $H$-free graphs whenever $H$ is not a suitable
generalization of a path or a subdivision of the
claw~\cite{DBLP:journals/algorithmica/BonnetBCTW20} (obtained by
replacing each vertex by a clique); in fact, the problem does not even admit
an FPT constant-factor approximation 
for these graph classes (assuming Gap ETH)~\cite{dvovrak2020parameterized}.
Known polynomial-time solvable special cases of the
{\sc Maximum Independent Set} problem include input graphs that are
perfect 
(including (co-)chordal and (co-)bipartite graphs), $P_6$-free
graphs~\cite{lokshtanov2017independence}, fork-free
graphs~\cite{lozin2008polynomial}, and other highly restricted classes~\cite{graphclasses,chudnovsky2020maximum, chudnovsky2019maximum,husic2019polynomial}.

\smallskip \noindent \textbf{Real worlds graphs}
It is widely accepted that the real-world graphs possess several nice properties that differentiate them from arbitrary graphs. 
The established ones include heavy-tailed degree distributions, 
a high density of triangles and communities,
the small world property (low diameter), and triadic closure. 
Over the years there has been a lot of significant and influential work trying to capture 
the special structure of real-world graphs. 
The literature is almost entirely focused on the generative (i.e., probabilistic) models. 
A few most popular ones include
preferential attachment~\cite{barabasi1999emergence}, the copying model~\cite{kumar2000stochastic}, 
Kronecker graphs~\cite{leskovec2010kronecker}, the Chung-Lu random graph model~\cite{chung2002average, chung2002connected}, 
with many new models introduced every year.
For example, already in 2006, the survey by Chakrabarti and Faloutsos~\cite{chakrabarti2006graph} examines 23 different models.
Generative approaches are very enticing as they, by definition, give an easy way of producing synthetic data,
 and are a good proxy for studying random processes on graphs. 
On the other hand, if one is to design an algorithm for real-world graphs with good worst-case guarantees, 
a hard choice of the exact model arises as there is a little consensus about which of the many models is the “right” one, if any. 

An idea is to find algorithms that are not suited to any specific generative model, 
but only assume a deterministic condition.
In other words, isolate a parameter of the real-world graphs that differentiates them from arbitrary graphs 
and use it give stronger guarantees for particular algorithms/problems.
Fox, Roughgarden, Seshadhri, Wei, and Wein~\cite{DBLP:conf/icalp/FoxRSWW18,fox2020finding} took this approach and introduced the class of $c$-closed graphs, 
where they showed that the maximum clique  problem is FPT when parameterized by $c$. 

There are only a few other algorithmic results in the same spirit.
Notably, several problems can be solved faster for graphs with a power-law degree distribution:
Barch, Cygan, \L acki, and Sankowski~\cite{brach2016algorithmic} gave faster algorithms for transitive closure, maximum matching, determinant,
PageRank and matrix inverse; and 
Borassi, Crescenzi, and Trevisan~\cite{borassi2017axiomatic} gave faster algorithms for
diameter, radius, distance oracles, and computing the most ``central'' vertices by assuming additional axioms satisfied
by real-world graphs.

Motivated by triadic closure, Gupta, Roughgarden, and Seshadhri~\cite{gupta2016decompositions} define triangle-dense graphs 
and proved relevant structural results. 
Informally, they proved that if a constant fraction of two-hop paths are closed
into triangles, then (most of) the graph can be decomposed into clusters with diameter at most $2$.

\noindent \textbf{$c$-closed graphs} The $c$-closed graph model was introduced by Fox et al.~\cite{fox2020finding} (see book chapter in  ~\cite{roughgarden_2021} by some of the authors). After Fox et al. introduced $c$-closed graphs, Koana, Komusiewicz, and Sommer wrote several papers further exploting $c$-closure to design FPT algorithm for hard problems. In~\cite{koana2020exploiting} they showed that 
the dominating set problem, 
the induced matching problem, and the irredundant set problem admit kernels 
of size $k^{O(c)}$, $O(c^7 k^8)$, $O(c^{5/2} k^3)$ respectively; where $k$ is the size of the solution. In~\cite{DBLP:conf/isaac/KoanaKS20}, they show that enumerating maximal bicliques and $(d+1)$-plexes, is FPT with respect to $c$ and study fixed parameter tractability of related hard problems with respect to the parameter $c$ and size of the solution.
In~\cite{DBLP:journals/corr/abs-2103-03914}, they give the kernels for Capacitated Vertex Cover, Connected Vertex Cover, and Induced Matching of sizes $k^{O(c)}$, and $(ck)^{O(c)}$, respectively. Moreover, Koana and Nichterlein~\cite{DBLP:journals/dam/KoanaN21} explore the fixed parameter tractability of enumerating small induced subgraphs in a $c$-closed graph. 

We note that the densest subgraph problem is trivially solvable in polynomial time for $c$-closed graph when $c=1$, and NP-hard already for $c=2$, see~\cite{DBLP:journals/algorithmica/RamanS08}.

\smallskip \noindent \textbf{$(d+1)$-plexes}
The maximal cliques often fail to detect cohesive subgraphs. 
To address the issue, Seidman and Foster~\cite{seidman1978graph} in 1978 introduced the notion of $(d+1)$-plex.
We refer the reader to~\cite{wu2007parallel, mcclosky2012combinatorial,pattillo2013clique,berlowitz2015efficient, conte2017fast,bentert2019listing} and references therein for an overview of the literature. 
The literature is mostly focused on heuristic algorithms for finding large $(d+1)$-plexes or enumerating (several) maximal $(d+1)$-plexes 
without providing any worst-case guarantees. 
For example, recently Conte, Firmani, Patrignani, and Torlone~\cite{conte2019shared} gave a novel approach for the detection of 2-plexes.
We point out that Lewis and Yannakakis~\cite{lewis1980node} proved that the problem of finding a maximum $(d+1)$-plex is NP-hard for any fixed $d$. Alternate proof is given in~\cite{balasundaram2011clique}.

\smallskip \noindent \textbf{Counting and enumerating maximal subgraphs}
Counting (maximal) induced subgraphs in an arbitrary $N$-vertex graph is a crucial part when it comes to design of faster exact algorithms. 
We mention a few related results.
Moon and Moser~\cite{moonmoser} and also Miller and Muller~\cite{millerMuller} 
prove that the number of maximal cliques (equivalently maximal independent sets) 
in a graph on $\numOfV$ vertices is at most $3^{\numOfV/3}$. 
Tomita, Tanaka and Takahashi~\cite{tomita2006worst} gave an algorithm for finding a maximum clique by enumerating all maximal cliques in time $O(3^{\numOfV/3})$. 

Gupta, Raman and Saurabh~\cite[Theorem 4]{rRegular} show that the number of maximal $1$-regular induced graphs in an $\numOfV$-vertex graph is at most $10^{\numOfV/5}$ and gave an algorithm for finding a maximum such subgraph with similar running time.
Note that in any graph, the number of maximal induced matchings is not larger than the number of maximal induced subgraphs with degree at most $1$.
Therefore, it is somewhat surprising that the number of maximal induced subgraphs with degree at most $1$ 
is also bounded by $10^{\numOfV/5}$, as we show in Appendix~\ref{section:combinatorics}.
The same paper~\cite{rRegular} shows that for each integer $r$ there is a constant $\rho_r<2$, 
such that the number of maximal $r$-regular graphs in an $\numOfV$ vertex graph is at most $\rho_r^{\numOfV}$.

Zhou, Xu, Guo, Xiao, and Jin~\cite{zhou2020enumerating} show that for each $d$ there is a constant $\kappa_d<2$ 
such that all maximal $(d+1)$-plexes can be enumerated in time $O(\kappa_d^{\numOfV} \numOfV^2)$.
Implicitly, they also show that the number of maximal $(d+1)$-plexes is at most $\kappa_d^{\numOfV}$, i.e., $\m_d(\numOfV) \le \kappa_d^{\numOfV}$.

Pilipczuk and Pilipczuk~\cite{pilipczuk2012finding} show that for every fixed $d$ there is a constant $\gamma_d<2$ such that the number of maximal induced $d$-degenerate subgraphs in a graph on $\numOfV$ vertices is at most $\gamma_d^{\numOfV}$, i.e., $\dd_d(\numOfV) \le \gamma_d^{\numOfV}$.

\section{Preliminaries and complementary terminology}
\label{section:preliminaries}
We consider finite, simple, undirected graphs. Let $G = (V, E)$ be a graph.
We write $uv  \in E(G)$ for an edge $\{u, v\} \in E(G)$ and we say that 
the vertices $u$ and $v$ are \emph{adjacent} or that $u$ is a \emph{neighbor} of $v$ and vice versa. 
If $w\in N_G(u) \cap N_G(v)$ we say that $w$ is a \emph{common} neighbor of $u$ and $v$. 
For a vertex $v\in V(G)$ we denote by $N_G(v) = \{u \in V(G) : uv \in E(G)\}$ the \emph{neighborhood} of $v$ in $G$ and 
$N_G[v] = N_G(v) \cup \{v\}$ the \emph{closed neighborhood} of $v$ in $G$.
For $U\subseteq V(G)$, we define $N_G(U) = \cup_{u\in U} N_G(u) \setminus U$ and $N_G[U] = N_G(U) \cup U$. 
For simplicity, if the set $U$ is given implicitly as a collection of vertices $u_1, \dots, u_{\ell}$ we write $N_G(u_1, \dots, u_{\ell})$
instead of $N_G(\{u_1, \dots, u_{\ell}\})$, and similarly for $N_G[u_1, \dots , u_{\ell}]$.
We drop the subscript $G$ when the graph is clear from the context.

Let $W\subseteq V(G)$.
The induced subgraph $G[W]$ is defined as the graph $H = (W, E(G) \cap \binom{W}{2})$, where $\binom{W}{2}$
is the set of all unordered pairs with elements in $W$.
The graph $G[V(G)\setminus W]$ is also denoted as $G\setminus W$.
Set $W$ is \emph{separator} in $G$ if $G\setminus W$ has strictly more connected components than graph $G$.
A connected component is \emph{non-trivial} if it contains at least two vertices (equivalently at least one edge).
The diameter of $G$, denoted $\diam(G)$, is the length of a longest shortest path among two vertices in $G$.
If $G$ is disconnected, then $\diam(G) = \infty$.

The \emph{complement} of a graph $G= (V,E)$ is the graph $\overline G:= (V, \binom{V}{2} \setminus E)$.
We say that $W$ is a clique (in $G$) if for any two vertices $u,v \in W$ we have $uv \in E(G)$.
A set $I$ is an \emph{independent set} in $G$ if $I$ is a clique in $\overline G$.
A set $U \subseteq V(G)$ is a vertex cover in $G$ if $V(G)\setminus U$ is an independent set in $G$.
\begin{sloppypar}
The \emph{degree} of $v$ in $G$ is $\deg_G(v) = |N_G(v)|$,
and the \emph{maximum degree} of $G$ is \mbox{$\Delta(G) = \max_{v\in V(G)} \deg_G(v)$}.
Graph is $d$-\emph{degenerate} (has degeneracy at most $d$) if every induced subgraph of $G[S]$ contains a vertex $v$ such that $\deg_{G[S]}(v) \le d$. 
\end{sloppypar}
\begin{definition}[Treewidth,~\cite{robertson1986graph}]
Let $G$ be a graph. A \emph{tree decomposition} of $G$ is a pair $(T, \W)$, where $T$ is a tree and $\W = \{W_t \subseteq V(G) : t \in V(T)\}$
is a set of \emph{bags} satisfying
\begin{itemize}
	\item $\cup_{t\in V(T)} W_t = V(G)$ and for every edge $uv$ in $G$ there is bag $W_t$ containing $u$ and $v$; and
	\item if $t, t', t'' \in V(T)$ and $t'$ lies on the path between $t$ and $t''$ in $T$, then $W_t \cap W_{t''} \subseteq W_{t'}$.
\end{itemize}
The \emph{width} of $(T, \W)$ is $\max_{t\in V(T)} (|W_t| -1)$. 
The \emph{treewidth} of $G$, denoted $\tw(G)$, is the smallest number $t$ such that there is a tree decomposition $(T, \W)$ of $G$ with width $t$.
\end{definition}

Co-degree, co-treewidth, and co-degeneracy refer to the degree, treewidth and degeneracy in the complement graph, respectively.
\begin{definition}[$c$-closed,~\cite{DBLP:conf/icalp/FoxRSWW18}]
A graph $G$ is $c$-\emph{closed} if any two non-adjacent vertices have at most $c-1$ common neighbors.	
\end{definition}
Finding the smallest $c$ for which a given graph $G$ is $c$-closed
can be done by squaring the adjacency matrix in $O(n^{\omega})$ time, 
where $\omega < 2.373$ is the matrix multiplication exponent. 

A problem is said to be \emph{fixed-parameter tractable} with respect to a parameter $k$ if there is an algorithm that
solves it in time $O(f(k) n^{\alpha})$ where $f$ can be an arbitrary function and $\alpha$ is a constant, 
for more details on parameterized algorithms and complexity we refer to~\cite{cygan2015parameterized}.
Throughout the paper, unless otherwise stated the parameter is $c$, the number of vertices (resp. edges) in a $c$-closed graph (or its complement) is denoted by $n$ (resp. $m$), and the number of vertices in a generic graph is denoted by $\numOfV$. 

We state the main theorem of Fox et al. proving that maximal clique enumeration is FPT in $c$.

\begin{theorem}[Fox et al.\cite{DBLP:conf/icalp/FoxRSWW18,fox2020finding}]
\label{thm:cliquesIntro}
	 In any $c$-closed graph, a set of cliques containing all maximal cliques can be generated in time $O(p(n, c) + 3^{c/3}n^2)$, where 
	 $p(n,c) = O(n^{2+o(1)}c +c^{2-\omega-\alpha/(1-\alpha)} n^{\omega} + n^{\omega}\log(n))$ for the matrix multiplication exponent $\omega$ and $\alpha>0.29$.
\end{theorem}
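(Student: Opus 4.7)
The plan is to prove the theorem in two parts: first a structural claim that in any $c$-closed graph on $n$ vertices a family of at most $O(3^{c/3}\cdot n^2)$ cliques suffices to contain every maximal clique, and then an enumeration algorithm that realizes this bound up to some preprocessing $p(n,c)$. The factor shape $3^{c/3}\cdot n^2$ is itself suggestive: it says each maximal clique should be encoded by an (ordered or unordered) pair of vertices, of which there are $O(n^2)$ choices, together with at most $3^{c/3}$ residual choices inside a set of size $O(c)$ — exactly the regime where Moon--Moser applies.

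For the structural step, I would use the only real lever the hypothesis gives: for every \emph{non-adjacent} pair $\{u,v\}$, the common neighborhood satisfies $|N(u)\cap N(v)|<c$. The aim is a lemma saying that every maximal clique $K$ admits a witness pair $(u,v)$ — I expect with $u\in K$ and $v$ a vertex non-adjacent to $u$, chosen to exploit maximality of $K$ — such that $K\setminus\{u\}$ is contained in a set $S(u,v)$ of size less than $c$ (essentially $N(u)\cap N(v)$, possibly with a constant number of adjustments). Applying the classical Moon--Moser theorem inside $S(u,v)$ yields at most $3^{c/3}$ candidate cliques per pair, and summing over the $O(n^2)$ pairs gives the claimed bound. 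Small maximal cliques (of size at most, say, $2$) are bounded trivially by $O(n^2)$ and handled separately.

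The algorithm follows the structure of the bound. After preprocessing, iterate over all pairs $(u,v)$, compute the restricted set $S(u,v)$, and run a Bron--Kerbosch-style procedure with pivoting on $G[S(u,v)]$ to enumerate its maximal cliques; output their extensions together with $u$ (and $v$, when appropriate). This contributes the $3^{c/3}\cdot n^2$ term. The preprocessing $p(n,c)$ consists of identifying which pairs have common neighborhood of size $\ge c$ (by $c$-closedness these must be edges, so the structure is very constrained) and assembling common-neighborhood lists efficiently; here fast rectangular matrix multiplication with exponent $\alpha$ balances the cost, yielding the $n^{\omega}\log n$ and $c^{2-\omega-\alpha/(1-\alpha)}n^{\omega}$ terms, while the $n^{2+o(1)}c$ term comes from sweeping edges and touching their common neighborhoods once each.

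The main obstacle is the structural bound. Moon--Moser gives $3^{k/3}$ only inside a host set of size $k$, whereas a maximal clique of a $c$-closed graph can be far larger than $c$; $c$-closedness constrains non-edges, not cliques. The heart of the proof is therefore the witness lemma: showing that for every maximal clique $K$ one can commit to a pair $(u,v)$ so that, beyond $u$, the clique is trapped in a set of size $<c$. I expect this to require a careful case split on whether candidate witness pairs have common neighborhood of size $\ge c$ (in which case $c$-closedness forces adjacencies that simplify the picture) or $<c$ (in which case the trapping set is immediate), together with an amortization argument ensuring each maximal clique is charged to at most a bounded number of pairs so as not to over-count.
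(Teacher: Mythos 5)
Your overall architecture matches the paper's (and Fox et al.'s): charge each maximal clique to a non-adjacent witness pair, note that $c$-closure traps the clique (minus one vertex) in a common neighborhood of size at most $c-1$, apply Moon--Moser there, and multiply by $O(n^2)$ pairs; the enumeration and preprocessing layers are also as you describe. The gap is in the witness lemma, which you correctly flag as the heart of the matter but state in a form that is false. It is not true that every maximal clique $K$ with $|K|\ge 3$ admits a pair $(u,v)$ with $u\in K$, $v\notin N[u]$, and $K\setminus\{u\}\subseteq N(v)$: such a $v$ exists precisely when some outside vertex is non-adjacent to \emph{exactly one} vertex of $K$, and there are graphs in which every outside vertex misses at least two vertices of every maximal clique. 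For a concrete family, take the join of $k$ ``bowties'' (two triangles sharing an apex): every maximal clique has size $3k$, there are $2^k$ of them, none has a witness pair, and the graph is $c$-closed for $c\approx 5k$. So the exceptional cliques are neither few nor small, and your plan to dispose of them as ``small maximal cliques bounded by $O(n^2)$'' does not cover them. (They happen to be at most $2^{O(c)}$ in number here, but your argument establishes no bound for them at all.)

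The paper's route around this is not a direct sum over pairs but an induction on $n$: fix a vertex $v$; for each maximal clique $K$, either $K\setminus\{v\}$ remains maximal in $G\setminus\{v\}$ (these cliques are charged recursively to the smaller graph) or there exists $u$ non-adjacent to $v$ with $(K\setminus\{v\})\cup\{u\}$ a clique, in which case $K\setminus\{v\}\subseteq N(u)\cap N(v)$, a set of size at most $c-1$ where Moon--Moser gives $3^{(c-1)/3}$. Unrolling yields $\m_0(G)\le \m_0(G\setminus v)+n\cdot 3^{(c-1)/3}\le n^2 3^{(c-1)/3}$; the second factor of $n$ comes from the recursion depth, not from enumerating unordered pairs. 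Two smaller points: the amortization you worry about is unnecessary, since the theorem only asks for a superset of the maximal cliques and an upper bound on its size, so over-charging is harmless; and your case split on whether a witness pair has common neighborhood of size $\ge c$ is vacuous, because the witness pair is non-adjacent by construction and $c$-closure forbids that case outright.
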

  
\smallskip \noindent \textbf{Complementary terminology} 
We are interested in finding the dense subgraphs in $c$-closed graphs, 
but it is more convenient to present the rest of the paper in the complementary terminology.
This means that we will be working with the complements of $c$-closed graphs. We will use $\bf{m}$ to denote the number of edges in the \textbf{co-graph} (short for complement graph) of a $c$-closed graph. 


\begin{proposition}
A graph	$G$ is the complement of a $c$-closed graph if and only if for any two adjacent vertices $u,v$ in $G$ it holds $|V(G)\setminus N_G[u,v]| \le c-1$. 
\end{proposition}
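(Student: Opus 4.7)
The plan is to unpack the definition of $c$-closure for $H := \overline G$ and translate each ingredient into the language of $G$ via complementation. By definition, $H$ is $c$-closed if and only if for every pair of non-adjacent vertices $u,v$ in $H$ we have $|N_H(u) \cap N_H(v)| \le c-1$. Since edges of $\overline G$ are precisely the non-edges of $G$, a pair $\{u,v\}$ is non-adjacent in $H$ exactly when it is adjacent in $G$. So the quantifier on the left-hand side of the equivalence already matches the quantifier in the proposition, and it remains only to identify the quantity being bounded.

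The key step is the identity $N_H(u) \cap N_H(v) = V(G) \setminus N_G[u,v]$ for any pair $u,v \in V(G)$. For this, note that a vertex $w$ lies in $N_H(u) \cap N_H(v)$ iff $w \notin \{u,v\}$ and $w$ is adjacent in $H$ to both $u$ and $v$, which by definition of the complement means $w \notin \{u,v\}$ and $uw, vw \notin E(G)$. Equivalently, $w \notin \{u,v\} \cup N_G(u) \cup N_G(v) = N_G[u,v]$. Combining with the preceding paragraph, the $c$-closure of $H$ becomes the condition that for every $uv \in E(G)$ one has $|V(G) \setminus N_G[u,v]| \le c-1$, which is exactly the statement of the proposition.

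There is essentially no obstacle here: the proof is a routine check that two definitions coincide once complementation is applied. The only place where a careless reader might slip is ensuring that $u,v$ themselves are excluded from $N_H(u) \cap N_H(v)$ (since simple graphs have no loops), which is why the closed neighborhood $N_G[u,v]$ rather than the open neighborhood appears on the right-hand side.
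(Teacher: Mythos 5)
Your proof is correct: the paper states this proposition without proof as an immediate consequence of the definitions, and your argument is exactly the routine verification one would supply, including the key identity $N_{\overline G}(u)\cap N_{\overline G}(v)=V(G)\setminus N_G[u,v]$ and the correct attention to excluding $u,v$ themselves via the closed neighborhood.
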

As the notions of co-treewidth and co-degeneracy are already introduced in the complementary notions, 
it is clear that we are interested in the subgraphs of bounded treewidth and bounded degeneracy in the complement of a $c$-closed graph.

We provide an alternate definition of degenerate graphs, that follows by results of Matula-Beck~\cite{matula1983smallest}.

Given an ordering of vertices $(v_1, \ldots, v_n)$, we will let $V^+(v)$ denote the set of vertices following $v$ in the ordering, and $N^+(v)$ denote the neighbors of $v$ that are after $v$ in the ordering. Thus, $N^+(v) \subseteq V^+(v)$. 
Note that $N^+(v)$ and $V^+(v)$ depend on the ordering, but for brevity we do not it include in the notation as the ordering will always be clear from the context.

\begin{definition} (Degeneracy Ordering) An ordering of vertices $(v_1, \ldots, v_n)$ is a degeneracy ordering if for all $1 \le i \le n$, $v_i$ is the minimum degree vertex in $G[\{v_{i}, \ldots, v_n\}]$, breaking ties lexicographically.
\end{definition}

\begin{definition}($d$-Degenerate Graph)
    A graph $G = (V, E)$ is $d$-degenerate if there exists an ordering $(v_1, \ldots, v_n)$ such that for all $1 \le i \le n$, we have $|N^+(v_i)| \le d$. The degeneracy ordering of a  $d$-degenerate satisfies this property.
\end{definition}
\noindent

We recall that whenever we say maximal subgraph this is referred to a maximal vertex induced subgraph.

\section{Cliques}
\label{section:cliques}

For enumerating cliques, we only consider the backtracking approach, as the three-step approach is already given by Fox et al.~\cite{fox2020finding}.

\begin{definition}(Independent Set Backtracking Tree)
\label{def:backtracking}
    Let $G = (V, E)$ denote the co-graph of a $c$-closed graph and fix an ordering of the vertices. The backtracking tree of $G$ is denoted as $T = (X, F)$ where $X$ is a node-set and $F$ a link-set (we will use nodes and links for the backtracking tree and vertices and edges for $G$).
    A node in $X$ is labeled by a $U \subseteq V$, and a link is labeled by a $v \in V$.
    The tree has the following properties.
    \begin{itemize}
        \item
            The root node is labeled by $V$.
        \item
            All nodes that are labeled by an independent set are leaves.
        \item
            For all internal nodes labeled by $U$, there is a child node for each $v \in U$ labeled by $U' = V^+(v) \setminus N(v)$ with the corresponding link $(U, U')$ labeled by $v$.
    \end{itemize}
    \noindent The root node is at level 0 and the children of any vertex are at exactly one level lower than the vertex.
    We call every $P \cup Q$ an independent set path where $P$ is a root-to-leaf path in $T$ and $Q$ is the last node label of $P$.
\end{definition}

Consider any root-to-leaf path $P = (v_1, \ldots, v_k)$.
By definition of $T$, we have $v_i \in V^+(v_{i - 1}) \setminus N(v_1, \ldots, v_{i - 1})$ for all $1 \le i \le k$.
Hence, $P$ is an induced independent set since $P \subseteq V \setminus N(P)$.
Let the last node label of $P$ be $Q$ which is an independent set since it is a leaf label.
Then, $P \cup Q$ is also an independent set since $Q \subseteq V \setminus N[P]$.
Compiling the above conclusions, it follows that every independent set path in $T$ indeed is an induced independent set in $G$.
Moreover, by the fixed ordering, no two independent set paths correspond to the same independent set.
Now the following converse theorem is fairly straightforward and it does not use the $c$-closure property.
\begin{theorem}
    Every maximal independent set of $G$ is an independent set path in the backtracking tree $T$.
\end{theorem}
\begin{proof}
    Consider a maximal independent set $S$ of size $k$, and let $(v_1, \ldots, v_k)$ be the ordered form of $S$ according to our fixed ordering (in Definition~\ref{def:backtracking}).
    Choose the minimum $j$ such that $V^+(v_j) \setminus N(v_1, \ldots, v_j)$ is an independent set.
    We now show that $P = (v_1, \ldots, v_j)$ is a root-to-leaf path in $T$ and that $Q = \{v_{j + 1}, \ldots, v_k\}$ is the last node label of $P$; hence, $S$ is an independent set path of $T$.
    Further observe that if $P$ is a path starting at the root (a root-originating path), its last node must be a leaf by our choice of $j$.

    We prove that $P$ is a root-originating path by induction on $j$.
    For $j = 0$, this is vacuously true, and for $j = 1$, the claim holds since $v_1 \in V$.
    Now, consider some $j \ge 2$ and assume the inductive hypothesis for $j - 1$, so $(v_1, \ldots, v_{j - 1})$ is a root-originating path.
    Since $v_j \in V \setminus N(v_1, \ldots, v_{j - 1})$, since $S$ is an independent set, and since $v_j$ is of higher order than the vertices $v_1, \ldots, v_{j - 1}$, we have $v_j \in V^+(v_{j - 1}) \setminus N(v_1, \ldots, v_{j - 1})$.
    Thus, by definition of $T$, the path $P$ exists and is a root-originating path.

    Next, since $P$ is a root-to-leaf path, the last node label of $P$ is $U = V^+(v_j) \setminus N(P)$.
    Since $S$ is an independent set, for all $j < i \le k$, we have $v_i \in U$ since $v_i$ has higher order than any vertex in $P$.
    Further, if there exists a $v \in U \setminus \{v_{j + 1}, \ldots, v_k\}$, we have an independent set $P \cup U$ whose subset is $S$, contradicting the maximality of $S$.
    Hence, $Q = U$ as required.
\end{proof}

The key argument that bounds the size of the backtracking tree follows. 
It shows a surprising connection with the $c$-closure parameter.

\begin{lemma}
The backtracking tree $T$ has at most $c$ levels.
\end{lemma}
\begin{proof}
We show the lemma by showing that for every independent set $S$ of size $c$, the set of its non-neighbours is also an independent set. 

Let $U = V \setminus N[S]$ be the set of non-neighbours of $S$. 
We claim $U$ is an independent set.
If $G[U]$ were to contain an edge $\{u, v\}$, then $S \subseteq V \setminus N[u, v]$ since $S \cup \{u\}$ and $S \cup \{v\}$ are independent sets.
Since $|S| = c$, we breach the $c$-closed condition; thus, $U$ must be an independent set.
Hence $T$ has at most $c$ levels, since every node at level $c$ is a leaf node. 
\end{proof}

\begin{theorem}\label{thm:cliques}
   The size of the backtracking tree $T$ is $O(cm2^c)$.
\end{theorem}
\begin{proof}
    For any non-leaf node label $U$, the induced subgraph $G[U]$ contains an edge.
    For any edge $e = \{u,v\}$, let us count the number of such tree nodes such that $G[U]$ contains $e$.
    Let $P$ be the path in $T$ from the root to $U$.
    Then we have $P \subseteq V \setminus N[u, v]$ since $P \cup \{u\}$ and $P \cup \{v\}$ are independent sets.
    Since $|V \setminus N[u, v]| < c$ and all paths are unique, the edge $e$ can appear in at most $\binom{c}{i}$ non-leaf nodes at level $i$. In other words, the number of occurrences of edge $e$ at level $i$ can be at most $\binom{c}{i}$. Thus the total number of occurrences of all edges at level $i$ is at most $\sum\limits_{e \in E(G)}\binom{c}{i}=m\binom{c}{i}$. In other words, if we let $\bU_i$ be the set of all non-leaf nodes at level $i$, then $\sum\limits_{U \in \bU_i}|E(U)| \leq m\binom{c}{i}$. Note that this means that $|\bU_i| \leq m\binom{c}{i}$.

    The number of isolated vertices in $G[U]$ is less than $c$ since $G[U]$ contains an edge, and the number of non-isolated vertices in $G[U]$ is at most $2|E(U)|$.
    Hence, the node labeled by $U$ can have at most $2|E(U)| + c$ children.
    Thus the number of all children produced at level $i$ (i.e. the total number of nodes in the tree $T$ at level $i+1$) is at most
     \begin{align*}
        \sum\limits_{U \in {\bU_i}}(2|E(U)|+c) \leq 2\sum\limits_{U \in {\bU_i}}(|E(U)|) + c|\bU_i| \leq 2m{c \choose i} + cm{c \choose i} = (2+c)m{c \choose i},
    \end{align*}

    Thus, the total number of nodes in $T$ is given by
    \begin{align*}
        (2+c)m\sum\limits_{i = 0}^c {c \choose i} = O( cm2^c)
    \end{align*}
    as desired.\qedhere
\end{proof}
\noindent To construct the children for every internal node of this tree will take $O(n^2)$ time, so to build $T$ and enumerate a superset of maximal independent sets in $G$ (equivalently, a superset of maximal cliques in the $c$-closed graph whose complement is $G$) will take $O(cmn^22^c)$ time. Thus, the backtracking algorithm runs in FPT time with parameter $c$.
Interestingly, the backtracking algorithm does not need to know the value of the parameter $c$. 

\begin{corollary}\label{cor:backtrackingEnum}
The backtracking algorithm enumerates a superset of all maximal independent sets in the co-graph of a $c$-closed graph in time $O(cmn^22^c)$, where $m$ is the number of edges in the co-graph and $n$ is the number of vertices.
\end{corollary}

\section{$(d+1)$-plexes}
\label{section:plexes}


For any fixed $d$, we show that the number of maximal subgraphs with degree at most $d$ in the complement of a $c$-closed graph 
admits an FPT bound.
This implies that the number of maximal $(d+1)$-plexes in a $c$-closed
graph admits an FPT bound and an FPT enumeration algorithm. 

We give proofs using both approaches, starting with the approach that uses backtracking as a subroutine.

\begin{theorem}
Let $G$ be the complement of a $c$-closed graph. The number of maximal subgraphs with degree at most $d$ in $G$ is bounded by $O(mn^{2d-2}2^c)$.
\end{theorem}
\begin{proof}
We count two types of maximal subsets $S$ that induce a subgraph with degree at most $d$:
\begin{itemize}
 	\item subsets $S$ for which $G[S]$ is edgeless, and 
 	\item subsets $S$ for which $G[S]$ contains at least one edge. 
 \end{itemize} 
If $G[S]$ is a maximal subgraph with degree at most $d$ and $G[S]$ is edgeless, 
then $S$ is also a maximal independent set in $G$.
By Corollary~\ref{cor:backtrackingEnum}, a superset of all maximal independent sets in $G$ can be enumerate in time $O(cmn^22^c)$.

Suppose $G[S]$ has an edge, say $(u,v)$. Let $Y=S \cap N(u,v)$ and $Z=S \setminus N[u,v]$, then by the $c$-closed condition, $|Z| \leq c$. Moreover, since $Y$ consists of neighbors of $(u,v)$ and $u$ and $v$ can have at most $d-1$ neighbors, $|Y| \leq 2d-2$. For any edge, there are $2^c$ possible choices for $Z$ and $O(n^{2d-2})$ choices for $Y$. Hence, the number of maximal $(d+1)$-plexes containing at least one edge is $O(mn^{2d-2}2^c)$. By simply enumerating all possible choices for $Y$ and $Z$ for every edge and combining them, in total time $O(mn^{2d-2}2^c)$, we will have enumerated a superset of all $(d+1)$-plexes containing an edge.
\end{proof}

\begin{corollary}
Let $G$ be the complement of a $c$-closed graph. A superset of all maximal subgraphs with degree at most $d$ in $G$ can be enumerated in time $O(mn^{2d-2}2^c + cmn^22^c)$.
\end{corollary}

\subsection{Enumerating $(d+1)$-plexes via the three-step approach}
Next, we give an alternate bound with exponential improvement in $c$ is using the three step approach. The running time bound we obtain is $O(n^{2d} \cdot \kappa_d^{c} \cdot p(c))$ where $\kappa_d < 2$ is the root of $x^{d+4} - 2x^{d+3} + 1=0$; and for a polynomial $p$.

\smallskip \noindent \textbf{Combinatorial bound} 
Our bound depends on an extension of $\m_d(\numOfV)$.
For a (not necessarily $c$-closed) graph $G$ and $P\subseteq V(G)$, the number of maximal subgraphs containing $P$ and with degree at most $d$ is denoted by  $\m_d(G; P)$.
Analogously, $\m_d(\numOfV+p; p)$ is the maximum value $\m_d(G; P)$ takes over all graphs on $\numOfV+p$ vertices and all sets $P\subseteq V(G)$ with size $p$.
In particular, $\m_d(\numOfV) = \m_d(\numOfV; 0)$. 
By adding isolated vertices, it is easy to see that $\m_d(\numOfV+p; p) \le \m_d(\numOfV+p'; p')$ for all $p\le p'$.

By closely examining the result by Zhou et al.~\cite[Theorem 1]{zhou2020enumerating}, 
we note that they implicitly show that for each $d$ and every $p$
there is a constant $\kappa_d<2$ such that $\m_d(\numOfV+p; p) \le \kappa_d^{\numOfV}$.
More precisely, they show that the bound holds if $\kappa_d$ is the positive solution of $x^{d+3} - 2x^{d+2} + 1=0$.
For $d= 0,\dots, 4$ we have $\kappa_d = 1.618, 1.839, 1.928, 1.966$ and $1.984$.
To the best of our knowledge, next to the Moon-Moser theorem, 
these are the best (and only) existing bounds for $\m_d(\numOfV)$ and $\m_d(\numOfV + p; p)$.

The Moon-Moser theorem  states that $\kappa_0 = 3^{1/3}$ suffices.  
In Appendix~\ref{section:combinatorics}, we prove a tight upper bound on $\m_1(\numOfV)$.
In other words we show that we can set $\kappa_1 = 10^{1/5} \le 1.585$. 
The proof uses similar recursive bound(s) as in the Moon-Moser theorem (Theorem~\ref{thm:moonMozer}), 
and in the proof for $1$-regular graphs given by Gupta et al.~\cite[Theorem 4]{rRegular}, 
but our proof requires a significantly more extensive case analysis.

\begin{restatable}{theorem}{generalizedInducedMatching}
\label{thm:generalizedInducedMatching}
  $\m_1(\numOfV) \le 10^{\numOfV/5} \le 1.585^{\numOfV}$.
\end{restatable}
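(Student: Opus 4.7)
The plan is to prove $\m_1(\numOfV)\le \kappa^{\numOfV}$ with $\kappa := 10^{1/5}$ by strong induction on $\numOfV$, following the recursive template of the Moon--Moser theorem. The bound is tight, and the extremal example is a disjoint union of $\numOfV/5$ copies of $K_5$: every maximal degree-$\le 1$ induced subgraph of $K_5$ is one of its $\binom{5}{2}=10$ edges (any third vertex forces a triangle), so $K_5$ contributes a factor of $10=\kappa^5$ per five vertices. This extremum both fixes the constant $\kappa$ and pinpoints where the recursion must close with equality.

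Given a vertex $v\in V(G)$, I would classify every maximal degree-$\le 1$ subgraph $S$ according to the role of $v$ in $G[S]$: (i) $v\in S$ is isolated in $G[S]$, so $S\setminus\{v\}$ is a maximal degree-$\le 1$ subgraph of $G[V\setminus N[v]]$; (ii) $v\in S$ is paired with a unique neighbour $u\in N(v)\cap S$, so $S\setminus\{v,u\}$ is maximal in $G[V\setminus N[\{v,u\}]]$; or (iii) $v\notin S$, in which case maximality forces that adding $v$ to $S$ would create a vertex of degree~$2$, so either $v$ has at least two neighbours in $S$ or some neighbour $u\in S$ of $v$ is already paired to another vertex of $S$. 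Aggregating these counts yields a recursion of the form $\m_1(\numOfV)\le \sum_i c_i\,\m_1(\numOfV - a_i)$, and, by the induction hypothesis, it suffices to verify $\sum_i c_i\,\kappa^{-a_i}\le 1$, where $a_i$ is the number of vertices removed in the $i$-th branch.

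The main obstacle is that no single vertex-branching strategy closes the recursion at $\kappa = 10^{1/5}$. In particular, the Moon--Moser choice of a minimum-degree vertex fails on $K_5$: there $\delta=\Delta=4$, and the generic branching $\m_1(\numOfV)\le \m_1(\numOfV-1)+(\deg v+1)\cdot \m_1(\numOfV-1-\deg v)$ translates, under the induction hypothesis, to a right-hand side of $\kappa^4+5>\kappa^5=10$, so the inequality breaks. The remedy, and the source of the claimed extensive case analysis, is to split the induction into several structural regimes: (a) an isolated vertex or a very small component (trivial reduction $\m_1(G)=\m_1(G-v)$); (b) a leaf $v$ with neighbour $u$, where one branches on whether $v\in S$ and, when $v\notin S$, on $u$'s partner in $S$; (c) minimum degree $2$ or $3$, where branching on a triangle or short cycle through a carefully chosen vertex consumes $4$--$5$ vertices per branch; and (d) minimum degree $\ge 4$, where one switches to branching on an edge $uv$ and exploits $|N[\{u,v\}]|\ge 5$ to match the $K_5$-extremum. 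The last regime is tightest: when $N[\{u,v\}]$ induces~$K_5$ one obtains $10$ branches of budget $5$, realising the critical identity $10\cdot \kappa^{-5}=1$.

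The hardest part of the proof is not any single branch but the bookkeeping that guarantees every structural configuration falls into one of the above regimes and that each associated inequality $\sum_i c_i\kappa^{-a_i}\le 1$ holds with enough slack to absorb the less efficient sub-cases; in particular, small dense base graphs like $K_4$ and $K_5$ must be treated directly because the induction hypothesis $\m_1(k)\le \kappa^k$ is too loose to feed them back into the recursion. The overall outline mirrors the Moon--Moser argument and the proof for $1$-regular induced subgraphs due to Gupta, Raman, and Suarabh~\cite{rRegular}, but hitting the sharp constant $10^{1/5}$ forces one to enumerate substantially more local configurations, which accounts for the length of the full case analysis.
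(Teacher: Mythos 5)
Your skeleton is the same as the paper's: induction on $\numOfV$, the three-way branching on the role of a chosen vertex $v$ in a maximal set $S$ ($v\notin S$; $v\in S$ unmatched; $v\in S$ matched to some $w\in N(v)$), yielding $\m_1(G)\le \m_1(G\setminus v)+\m_1(G\setminus N[v])+\sum_{w\in N(v)}\m_1(G\setminus N[v,w])$, and the observation that the union of $K_5$'s forces $\kappa=10^{1/5}$ and that the naive recursion does not close at this constant. But the proposal stops exactly where the proof begins: the entire case analysis, which is the content of the theorem, is deferred to phrases like ``branching on a triangle or short cycle through a carefully chosen vertex consumes $4$--$5$ vertices per branch.'' Note that the generic branching fails for every degree from $2$ to $5$, not just on $K_5$: with $\kappa^{-j}=10^{-j/5}$ one gets $\kappa^{-1}+3\kappa^{-3}\approx 1.385$, $\kappa^{-1}+4\kappa^{-4}\approx 1.265$, $\kappa^{-1}+5\kappa^{-5}=1.131$, and $\kappa^{-1}+6\kappa^{-6}\approx 1.010$, all exceeding $1$; it closes only from degree $6$ on. So \emph{all} of degrees $2$ through $5$ need bespoke refinements, and you have not exhibited them.

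Two of your regimes are moreover not recursions at all as stated, so the plan is not merely incomplete but would fail if executed as written. For minimum degree $\ge 4$ you propose to ``branch on an edge $uv$'' and invoke ``$10$ branches of budget $5$''; that computation describes only the isolated base graph $K_5$ (handled in the paper by a Sperner/antichain argument for $n\le 5$), not a recursion for a connected graph on $\ge 6$ vertices. For a connected graph with a degree-$4$ vertex $v$ whose neighbourhood contains a dominated vertex, the sum is $\kappa^{-1}+\kappa^{-5}+3\kappa^{-5}+\kappa^{-6}\approx 1.094>1$; the paper rescues this (its Case D.2) with a domination lemma showing that a vertex dominating a neighbour is always matched, which deletes the $\m_1(G\setminus N[v])$ term and brings the sum to $0.994$. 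You have no analogue of this lemma, nor of the twin-splitting lemma, and both are load-bearing in several subcases. Similarly, organizing by \emph{minimum} degree is the wrong axis: the paper branches at a \emph{maximum}-degree vertex, reduces to cubic graphs and cycles, and then needs five distinct subcases for cubic graphs alone (including refined branchings that re-expand the ``$v\notin S$'' term into explicit configurations of $N(v)$). Until those inequalities are written down and verified, the bound $10^{\numOfV/5}$ is not established.
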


To see that the bound is tight consider any $\numOfV$ a multiple of $5$. 
The graph consisting of $\frac{\numOfV}{5}$ copies of $K_5$ 
contains $10^{\numOfV/5}$ maximal subgraphs with degree at most 1.
The same number of subgraphs is attained if we remove a matching from each of the $K_5$s.

\smallskip \noindent \textbf{FPT bound}
Our next goal is to give an upper bound on the number of subgraphs with degree at most $d$ in the complement of a $c$-closed graph using 
$\m_d(\numOfV +p; p)$ for $d>1$, and $\m_1(\numOfV)$. 
For the case when $d=0$, we already have Theorem~\ref{thm:cliquesIntro} which we use in the proof. 

\begin{theorem}
	\label{thm:boundedDegree}
	Let $G$ be the complement of a $c$-closed graph.
	The number of maximal induced subgraphs with degree at most $d$ in $G$,
	is bounded by $2 n^{2d} \cdot \m_d(c - 1 + 2d; 2d)$.
	Moreover, for $d=1$ the bound simplifies to $2 n^2 \cdot \m_1(c - 1)$.
\end{theorem}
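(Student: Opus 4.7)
The plan is to translate the combinatorial argument behind Fox et al.'s clique bound to the low-co-degree setting, working in the complement throughout. Write $G$ for the complement of the $c$-closed input graph; by the proposition stated earlier, every edge $uv$ of $G$ satisfies $|V(G)\setminus N_G[u,v]|\le c-1$. Let $S$ be a maximal induced subgraph of $G$ with $\Delta(G[S])\le d$. I would split into two cases depending on whether $G[S]$ has any edges. In the edgeless case, $S$ is independent in $G$, and I would first observe that $S$ is in fact a \emph{maximal independent set} in $G$: adding any $w\notin S$ must push some degree in $G[S\cup\{w\}]$ above $d$, and since all existing degrees in $G[S]$ are zero, this forces $|N_G(w)\cap S|\ge d+1\ge 1$. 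Theorem~\ref{thm:cliquesIntro} then bounds the number of such $S$ by $n^2\cdot 3^{(c-1)/3}$.

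Now suppose $G[S]$ contains an edge $uv$. Each of $u,v$ has at most $d-1$ further $G$-neighbors in $S$, so the set $P:=\{u,v\}\cup(N_G(u)\cap S)\cup(N_G(v)\cap S)$ has size at most $2d$ and by construction absorbs every vertex of $S$ that is $G$-adjacent to $u$ or $v$. Consequently each $w\in S\setminus P$ is $G$-non-adjacent to both $u$ and $v$, so $w\in W:=V(G)\setminus N_G[u,v]$, a set of size at most $c-1$ by $c$-closedness. Thus $S\subseteq P\cup W$, an ambient set of at most $2d+c-1$ vertices containing the fixed $2d$-set $P$, and $S$ is a maximal degree-$\le d$ induced subgraph of $G[P\cup W]$ that contains $P$. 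For each of the at most $n^{2d}$ choices of a tuple $(p_1,\dots,p_{2d})$ specifying $P$ (with, say, $u=p_1$ and $v=p_2$), the definition of $\m_d(\,\cdot\,;\,\cdot\,)$ together with its monotonicity yields at most $\m_d(c-1+2d;2d)$ candidates for $S$, so this case contributes at most $n^{2d}\cdot\m_d(c-1+2d;2d)$.

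Summing the two cases gives $n^2\cdot 3^{(c-1)/3}+n^{2d}\cdot\m_d(c-1+2d;2d)\le 2n^{2d}\cdot\m_d(c-1+2d;2d)$, using for $d\ge 1$ the inequality $\m_d(c-1+2d;2d)\ge 3^{(c-1)/3}$ (witnessed by $(c-1)/3$ disjoint triangles together with $2d$ isolated vertices forming $P$) and $n^2\le n^{2d}$. For the $d=1$ simplification, $u$ and $v$ each have degree exactly one in $G[S]$, so $P=\{u,v\}$; because $W\cap N_G[u,v]=\emptyset$, the pair $\{u,v\}$ is an isolated edge of $G[P\cup W]$, and maximal degree-$\le 1$ subgraphs of $G[P\cup W]$ containing $P$ correspond bijectively to $\{u,v\}$ together with a maximal degree-$\le 1$ subgraph of $G[W]$. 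This yields the sharper Case~B bound $n^2\cdot\m_1(c-1)$, and a total of $2n^2\cdot\m_1(c-1)$ once combined with Case~A.

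The main obstacle is choosing the pivot set $P$ correctly. Using only the edge endpoints $\{u,v\}$ suffices when $d=1$ but fails for larger $d$: a vertex $w\in S$ might be $G$-adjacent to $u$ or $v$ (up to $d-1$ such adjacencies on each side), in which case $w$ need not lie in $V(G)\setminus N_G[u,v]$ and $c$-closedness gives no control on where $w$ can be. Augmenting $P$ by exactly those neighbors inside $S$ absorbs the obstruction at the cost of raising the pivot count from $2$ to $2d$, which is precisely the trade-off encoded in the factor $n^{2d}\cdot\m_d(c-1+2d;2d)$.
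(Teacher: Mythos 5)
Your proof follows the paper's argument essentially step for step: the same split into edgeless versus edge-containing maximal subgraphs, the same pivot set (your $P$ is exactly the paper's $\{u,v\}\cup N_S$ with $N_S=S\cap N_G(u,v)$, of size at most $2d$), the same use of the complementary $c$-closure to confine $S\setminus P$ to the at most $c-1$ common non-neighbors of $u$ and $v$, the same appeal to $\m_d(c-1+2d;2d)$ with monotonicity in the anchored set, and the same refinement for $d=1$ via $N_S=\emptyset$. The one point where you diverge is in justifying the folding inequality $\m_0(c-1)\le \m_d(c-1+2d;2d)$ used to absorb the edgeless case into the final bound: your proposed witness (disjoint triangles plus $2d$ isolated anchor vertices) only certifies this for $d=1$, since for $d\ge 2$ that graph has maximum degree $2\le d$ and hence a \emph{unique} maximal degree-$\le d$ subgraph containing $P$. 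The paper instead asserts the chain $\m_0(c-1)\le\m_0(c-1+d;d)\le\m_d(c-1+2d;2d)$ without an explicit construction, so this is a small defect in an auxiliary step rather than a gap in the main argument; the additive bound $n^2\,\m_0(c-1)+n^{2d}\,\m_d(c-1+2d;2d)$ is fully established by your proof exactly as in the paper.
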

\begin{proof}
Similar to the proof for the first bound for counting $(d+1)$-plexes, we count two types of maximal subsets $S$ that induce a subgraph with degree at most $d$:
\begin{itemize}
 	\item subsets $S$ for which $G[S]$ is edgeless, and 
 	\item subsets $S$ for which $G[S]$ contains at least one edge. 
 \end{itemize} 
If $G[S]$ is a maximal subgraph with degree at most $d$ and $G[S]$ is edgeless, 
then $S$ is also a maximal independent set in $G$.
By Theorem~\ref{thm:cliquesIntro}, the number of maximal independent sets in $G$ 
is bounded by $n^2 \cdot  \m_0(c-1)$.
By definition, it is not hard to see that $\m_0(c-1) \le \m_0(c-1+d; d) \le \m_d(c-1+2d; 2d)$ holds.
Therefore, in order to prove the theorem, it suffices to show that 
the number of maximal subgraphs that contain an edge and with degree at most $d$
is bounded by $n^{2d} \cdot \m_d(c - 1+2d; 2d)$.

\begin{figure}[h]
	\centering
	\includegraphics[width = 0.7 \textwidth]{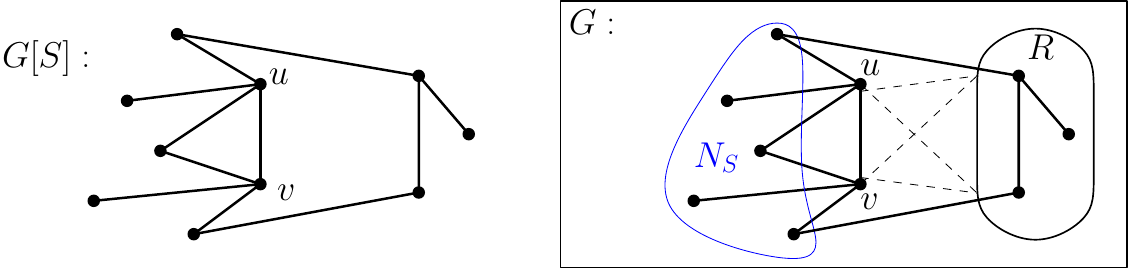}
	\caption{Proof of Theorem~\ref{thm:boundedDegree}. 
	Left: $G[S]$ represents an induced subgraph with maximum degree $4$.
	Right: depiction of $G[S]$ within $G$. 
	Recall that $R= V(G)\setminus N_G[u,v]$ and that $|R|\le c-1$ as $G$ is the complement of a $c$-closed graph.
	The dashed lines represent non-edges.}
	\label{figure:kplexProof}
\end{figure}

We refer to Figure~\ref{figure:kplexProof}. 
Let $uv$ be an edge in $G$. 
Suppose that $S$ is a maximal set such that $\Delta(G[S]) \le d$ and $u,v \in S$.   
Let $N_S = S\cap N_G(u,v)$.
By the maximum degree assumption and since $u$ and $v$ are adjacent to each other, 
there are at most $2d-2$ vertices in $N_S$.
To prove the theorem, we show that the number of maximal sets $S$ satisfying the following two 
\begin{itemize}
	\item degree of $G[S]$ is at most $d$, and
	\item $S$ contains $\{u,v \}$ and $S\cap N(u,v) = N_S$ ($S$ contains $\le 2d$ fixed vertices);
\end{itemize}
is bounded by $\m_d(c - 1+2d; 2d)$.

\smallskip
We claim that any such maximal set $S$ also induces a maximal subgraph (with the same properties) 
in graph $G[\{u,v\} \cup N_S \cup R]$ where $R = V\setminus N_G[u,v]$.
Namely, we can obtain $G[\{u,v\} \cup N_S \cup R]$ from $G$ by removing some vertices that are not in $S$.
As removal of such vertices does not influence the maximality of $S$, it follows that 
$S$ induces a maximal subgraph (with the above stated properties) in $G[\{u,v\} \cup N_S \cup R]$.

Since $G$ is the complement of a $c$-closed graph and by definition of $R$, we have $|R| \le c-1$.
Let $k = |\{u,v\} \cup N_S|$.
Then, by definition of $\m_d(c -1+k; k)$ it follows that 
the number of maximal sets $S$ that induce a subgraph with degree at most $d$ and contain $\{u,v\} \cup N_S$
is bounded by $\m_d(c - 1+k; k)$.
As $|\{u,v\} \cup N_S| = k\le 2d$ we have $\m_d(c-1+k; k) \le \m_d(c-1+2d; 2d)$ and the proof follows.

\smallskip
Next, we deal with the case $d=1$.
The proof is largely the same and we make a small change in the way we count the subsets $S$ that contain $u, v$. 
As the maximum degree of $G[S]$ is at most $1$ and since $u$ and $v$ are adjacent to each other 
we have that $N_S = \emptyset$. 
We claim that if $S$ is maximal set with degree at most $1$ in $G$ containing $uv$,
then $S\setminus \{u,v\}$ is a maximal set with degree at most $1$ in $G[R]$.

For a contradiction, suppose that $S\setminus \{u,v\}$ is not a maximal such set, 
and let $S' \subseteq R$ such that $S\setminus \{u,v \}\subset S'$ and $\Delta(G[S']) \le 1$.
Since $S' \subseteq R = V(G) \setminus N[u,v]$ it follows that $u$ and $v$ are non-adjacent to $S'$.
Thus, $\Delta(G[S' \cup \{u,v\}]) \le 1$ contradicting maximality of $S$.

It follows that the number of maximal subsets $S$ with $\Delta(G[S])\le 1$ 
and that contain edge $uv$ is at most $\m_1(c-1)$.
Thus, the number of maximal subsets $S$ with $\Delta(G[S]) \le 1$ is bounded by $n^2\m_0(c-1) + n^2\m_1(c-1) \le 2n^2\m_1(c-1)$.
\end{proof}

We give an example showing that the dependency on $n$ and $d$ cannot be improved.
\begin{example}\label{examplePlexes}
	Any complete bipartite graph is the complement of a $1$-closed graph 
	as any two adjacent vertices have no common non-neighbors.
	Let $K_{i,j}$ be the complete bipartite graph with parts of size $i$ and $j$.
	It is easy to see that the number of maximal subgraphs with degree at most $d$ in $K_{\ell, \ell}$ for $\ell >d$, 
	is at least $\Omega(\ell^{2d}) = \Omega\left( \frac{|V(K_{\ell, \ell})|^{2d}}{2^{2d}} \right)$ for any fixed $d$.
 \end{example}

\smallskip \noindent \textbf{Enumeration}
Equipped with Theorem~\ref{thm:boundedDegree} it is straightforward to obtain an algorithm, with running time 
similar to the FPT bound, for enumeration of all maximal $(d+1)$-plexes in $c$-closed graph. 
A simple way is to run a polynomial delay algorithm for listing 
all maximal subgraphs with degree at most $d$ on the complement graph~\cite{berlowitz2015efficient}. 
The FPT bound then implies that the enumeration algorithm indeed runs in FPT time. 
A better running time can be obtained if the enumeration algorithm is incorporated directly into the proof of the FPT bound. 
We sketch it below.

\begin{restatable}{corollary}{corPlexes}\label{cor:fasterBoundedDegree}[Restatement of Theorem \ref{thm:fasterBoundedDegree}]
	For $c$-closed graphs and a fixed $d \ge 0$, 
	there is an algorithm running in time $O(n^{2d} \cdot \kappa_d^{c} \cdot p(c))$ for enumerating $(d+1)$-plexes, 
	where $\kappa_d < 2$ is the root of $x^{d+4} - 2x^{d+3} + 1=0$; and for a polynomial $p$.
 	For $2$-plexes, a stronger bound $O(n^{2} \cdot 10^{c/5} \cdot p(c))$ applies.
\end{restatable}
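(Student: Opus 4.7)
The plan is to convert the case analysis behind Theorem~\ref{thm:boundedDegree} directly into an enumeration routine whose running time essentially matches the combinatorial bound. Write $G$ for the complement of the $c$-closed input; then a $(d+1)$-plex of the input corresponds to a vertex set $S$ with $\Delta(G[S]) \le d$. Following the proof, I would split the output into the two cases used there: $G[S]$ is edgeless (so $S$ is a maximal independent set of $G$, equivalently a maximal clique of the original $c$-closed graph), and $G[S]$ contains at least one edge of $G$. The edgeless case is handled by invoking Theorem~\ref{thm:cliquesIntro} once; its running time fits within the claimed bound.

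For the edge-containing case, I would iterate over every triple $(u,v,N_S)$ with $uv \in E(G)$ and $N_S \subseteq N_G(u,v)$ of size at most $2d-2$. The degree accounting in $G[S]$ used in the proof of Theorem~\ref{thm:boundedDegree} gives $|S \cap N_G(u,v)| \le 2d-2$, so every edge-containing maximal $S$ is captured by some such triple, and there are only $O(n^{2d})$ triples to consider. For each triple, let $R := V(G) \setminus N_G[u,v]$ (so $|R| \le c-1$ by $c$-closure) and form the small induced subgraph $G_t := G[\{u,v\} \cup N_S \cup R]$ on at most $c-1+2d$ vertices. I would then run the polynomial-delay algorithm of Berlowitz et al.~\cite{berlowitz2015efficient} on $G_t$, restricted to outputs containing $\{u,v\} \cup N_S$; this produces at most $\m_d(c-1+2d;\,2d) \le \kappa_d^{c-1}$ candidates per triple, with per-candidate delay polynomial in $c$ since $G_t$ has $O(c)$ vertices for fixed $d$. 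This gives per-triple cost $O(\kappa_d^c\cdot p(c))$.

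To finish, I would filter each candidate $S$ via two polynomial-time checks: (a) $S$ is maximal in all of $G$, not merely in $G_t$; and (b) the currently iterated edge $uv$ is the lexicographically smallest edge of $G[S]$, with $N_S = S \cap N_G(u,v)$. Rule (b) designates a single canonical triple for each maximal $(d+1)$-plex, ensuring it is reported exactly once across the $O(n^{2d})$ iterations, and multiplying the per-triple cost by the number of triples yields the claimed $O(n^{2d}\cdot \kappa_d^{c} \cdot p(c))$. For the sharper $d = 1$ bound I would follow the refined argument at the end of the proof of Theorem~\ref{thm:boundedDegree}: the degree accounting now forces $N_S = \emptyset$, so I enumerate maximal subgraphs of $G[R]$ of degree at most $1$ directly, and substitute the tight $\m_1(c-1) \le 10^{(c-1)/5}$ of Theorem~\ref{thm:generalizedInducedMatching} to obtain $O(n^2\cdot 10^{c/5}\cdot p(c))$. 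The main obstacle I expect is the de-duplication across the $O(n^{2d})$ triples, since one maximal $(d+1)$-plex is naturally generated once per edge of $G[S]$; the lex-smallest-edge canonical representative described above handles this cleanly at polynomial cost.
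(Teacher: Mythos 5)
Your proposal is correct and follows essentially the same route as the paper's proof: split off the edgeless case via Theorem~\ref{thm:cliquesIntro}, then for each fixed edge $uv$ and guessed neighborhood $N_S$ confine the remainder of the plex to the at most $c-1$ common non-neighbors and run the polynomial-delay algorithm of Berlowitz et al.\ on that $O(c)$-vertex subgraph, with the tight $\m_1(c-1)\le 10^{(c-1)/5}$ bound giving the sharper $d=1$ case. Your explicit maximality filter and lex-smallest-edge de-duplication rule are details the paper's sketch leaves implicit, and they are handled correctly.
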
 

\begin{proof}[Proof of Corollary~\ref{cor:fasterBoundedDegree}]
	We enumerate all maximal subgraphs with degree at most $d$ in the complement graph. 
	If a maximal subgraph with degree at most $d$ is edgeless,
	then it is also a maximal independent set and we use the algorithm by Fox et al.~\cite{fox2020finding} stated in Theorem~\ref{thm:cliquesIntro}.	 
	
	Hence, we only need to enumerate the maximal subgraphs with degree at most $d$ and that contain at least one edge. 
	Similarly, as in the proof of Theorem~\ref{thm:boundedDegree} once we fix an edge $uv$, and the neighbors of $u$ and $v$
	the rest of maximal induced subgraph is contained in a subset of at most $c-1$ vertices. 
	By applying the polynomial delay algorithm~\cite{berlowitz2015efficient} to these vertices, 
	we can obtain all maximal subgraphs of degree at most $d$ that contain the fixed vertices in time 
	$O(\m_d(c - 1 + 2d; 2d) \cdot p(c)) \le  O(\kappa_d^{c} \cdot p(c))$ for a polynomial $p$.
\end{proof}

\section{Bounded co-degeneracy}
\label{section:degeneracy}

As with $(d+1)$-plexes, we first give the result with the backtracking approach.

Any $d$-degenerate graph (with possible isolated vertices) can either be an independent set or it can be separated into 3 components, characterized by an edge in the graph.
This decomposition is unrelated to the $c$-closed property, but we exploit this structure for faster enumeration in a $c$-closed co-graph.

\begin{lemma}
    Consider a $d$-degenerate graph $H$ with the degeneracy ordering of $(u_1, \ldots, u_n)$.
    If $H$ is not an independent set, there exists an edge $(u_s, u_t)$ such that for
    \begin{align*}
        X = \{u_1, \ldots, u_{s - 1}\}, && Y = \{u_{s + 1}, \ldots, u_{t - 1}\}, && Z = \{u_{t + 1}, \ldots, u_n\},
    \end{align*}
    $X$ is an independent set, $Y$ is a subset of $V \setminus N[u_s, u_t]$, and $Z$ is a subset $V \setminus N[u_s, u_t]$ with at most $2d-2$ additional vertices.
\end{lemma}
\begin{proof}
    Choose minimum $t$ such that $u_t$ is a terminal vertex of an edge in $H$.
    Then choose maximum $s$ such that $(u_s, u_t)$ is an edge in $H$ (this must exist since $H$ is not an independent set).
    By the minimality of $t$, $X$ is an independent set.

    By the minimality of $t$, $u_s$ is not adjacent to any vertex in $Y$.
    By the maximality of $s$, $u_t$ is not adjacent to any vertex in $Y$.
    Hence, $Y \subseteq V \setminus N[u_s, u_t]$.
    
    Furthermore, since $u_t$ and $u_s$ are connected, each can be adjacent to at most $d-1$ vertices in $Z$ to ensure the $d$-degeneracy condition.
    Thus, the rest of the vertices in $Z$ are non-adjacent from both $u_t$ and $u_s$.
\end{proof}
\noindent Notice that since $|V \setminus N[u_s, u_t]| < c$ by the $c$-closed condition, we have $|Y| < c$ and $|Z| < 2d - 2 + c$. 
Furthermore, note that if $H$ is maximal, then so is the independent set $X$.

\begin{restatable}{theorem}{thmDegeneracy}[Restatement of Theorem \ref{thm:max-degen}]
 For $c$-closed graphs and a fixed $d \ge 0$, 
	there is an algorithm running in time $O(cm^2n^{2d}4^c + cmn^22^c)$ that outputs a set containing
all maximal induced  subgraphs with co-degeneracy $d$ in the $c$-closed graph, where $m$ is the number of edges in the complement graph of the $c$-closed graph.
\end{restatable}

\begin{proof}
We describe an algorithm that generates supersets of all maximal induced $d$-degenerate subgraphs in a $c$-closed co-graph $G$.
(We can check in linear time whether each such subgraph is truly $d$-degenerate.)

Start with any edge $\{u, v\}$ and pick an orientation (say) $(u,v)$.
Then, we construct all possible choices of $Y$ and $Z$, which takes $O(n^{2d-2}2^c)$ time.
Next, we choose $Y$ and $Z$ such that $G[Y \cup Z \cup \{u,v\}]$ is a $d$-degenerate subgraph whose degeneracy ordering is $(u, Y, v, Z)$.
Then, we can build a set $S$ of vertices $s$ where $s$ is the first vertex in the degeneracy ordering of $G[\{s, u, v\} \cup Y \cup Z]$.
Lastly, we enumerate all maximal independent sets $X$ in $G[S]$ which takes $O(cmn^22^c)$ time by Corollary~\ref{cor:backtrackingEnum}.
Then, any maximal $d$-degenerate subgraph $H$ of $G$ is $X \cup Y \cup Z \cup \{u, v\}$ for some chosen $X$, $Y$, $Z$, and $\{u, v\}$ according to the above algorithm.
The total run-time for this algorithm is $O(cm^2n^{2d}4^c + cmn^22^c)$.
\end{proof}

\subsection{
Enumerating subgraphs of bounded co-degeneracy with the three-step approach}
We give another FPT algorithm for enumerating all maximal subgraphs with degeneracy at most $d$ in the complement of a $c$-closed graph using the three-step approach. For this (as well as for bounded-treewidth) we use the notion of a generalized star and of an $(\ell, k)$-partition. We define these below. 
The bound obtained using this approach is worse than the algorithm described above but we include it for the sake of completeness and since the same notions are used in the case of bounded treewidth.
The proof uses an alternate characterization of the structure of a bounded-degeneracy graph in the co-graph of a $c$-closed graph. 
For details and missing proofs we refer to Appendix~\ref{section:degeneracyAppendix}.

\noindent \textbf{Generalized stars} We say that that a graph $H$ is a $k$-star if there is a partition $\{A, B\}$  of $V(H)$
such that $|A|\le k$ and $B$ is an independent set.
Equivalently, graph is a $k$-star if and only if it has a vertex cover of size at most $k$.
We say that $A$ is the \emph{head} of the $k$-star $H$, and $B$ is the set of \emph{tails}. 
A $k$-star is \emph{proper} if every tail is adjacent to at most $k-1$ vertices (in the head).
In particular, any $(k-1)$-star is a proper $k$-star.
We note that an edgeless graph is a proper $1$-star and a (vertex disjoint) union of an edgeless graph and a star is a proper $2$-star.

\begin{lemma}\label{lemma:numberOfKstars}
	Let $G$ be the complement of a $c$-closed graph.
	The number of subsets $S\subseteq V(G)$ that induce a proper $k$-star with a 
	maximal set of tails is at most $2 \cdot n^{k+2} \cdot \m_0(c-1)$.
\end{lemma}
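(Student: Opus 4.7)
\medskip

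\noindent The plan is to enumerate over possible heads $A$ of the proper $k$-star and, for each fixed head, reduce the count of maximal tail sets to counting maximal independent sets in an induced subgraph, at which point Theorem~\ref{thm:cliquesIntro} can be invoked as a black box. Every $S$ that induces a proper $k$-star with a maximal set of tails arises from at least one valid partition $(A,B)$ with $A\subseteq V(G)$, $|A|\le k$, so summing the number of maximal $B$'s over all candidate $A$'s upper-bounds the number of such $S$ (possibly with harmless overcounting).

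For a fixed candidate head $A$, I would define the set of candidate tails
\[
V_A := \{v\in V(G)\setminus A \,:\, |N_G(v)\cap A|\le k-1\}.
\]
A set $B\subseteq V(G)\setminus A$ serves as a tail set of a proper $k$-star with head $A$ iff $B\subseteq V_A$ and $G[B]$ is edgeless; moreover, $B$ is maximal in this sense iff $B$ is a maximal independent set of $G[V_A]$. So the task reduces to bounding the number of maximal independent sets of $G[V_A]$.

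The substantive observation, and really the whole point of the proof, is that the property of being the complement of a $c$-closed graph is hereditary under induced subgraphs: common-neighbor counts in $\overline{G}$ can only decrease upon restriction, so $\overline{G[V_A]}$ is again $c$-closed on at most $n$ vertices. Applying Theorem~\ref{thm:cliquesIntro} to $G[V_A]$ therefore bounds its number of maximal independent sets by $|V_A|^2\cdot\m_0(c-1)\le n^2\cdot\m_0(c-1)$.

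To finish, I would bound the number of candidate heads by $\sum_{j=0}^{k}\binom{n}{j}\le 2n^k$ (a crude geometric-series estimate, trivial for small $n$) and multiply to obtain the claimed $2\cdot n^{k+2}\cdot\m_0(c-1)$. I do not anticipate a genuine obstacle: the argument is essentially a one-step reduction to Theorem~\ref{thm:cliquesIntro} via hereditary $c$-closure, and the ``proper'' qualifier on the $k$-star enters only in shaping the set $V_A$ of candidate tails rather than changing the underlying mechanism.
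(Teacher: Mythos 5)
Your proposal is correct and follows essentially the same route as the paper: fix a candidate head $A$, observe that a maximal tail set is exactly a maximal independent set in the induced subgraph on the admissible tail vertices, and invoke the Fox et al.\ bound via heredity of the $c$-closed property under induced subgraphs. The only cosmetic difference is that your single set $V_A$ unifies the paper's two cases ($|A|=k$, where vertices adjacent to all of $A$ must be excluded, versus $|A|<k$, where no exclusion is needed).
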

Note that we only require that the set of tails is maximal: 
there is no proper $k$-star with the same head and a strictly larger (inclusion-wise) set of tails.

\begin{proof}[Proof of Lemma~\ref{lemma:numberOfKstars}]
	Let $A\subseteq V(G)$ be a set of at most $k$ vertices. 
	For a proper $k$-star with head $A$ and the set of tails $B$ 
	it holds that $B$ is an independent set in $G\setminus A$.
	Suppose that the $B$ is the maximal set of tails for the $k$-star $G[A\cup B]$.
	
	Let $X$ be the set of vertices $v \in V(G)\setminus A$ that are adjacent to every vertex in $A$.
	If $|A| = k$, then since $G[A\cup B]$ is proper and by maximality of the tail, 
	it follows that $B$ is a maximal independent set in $G\setminus (A\cup X)$.
	If $|A| < k$ then by the maximality of tail, $B$ is a maximal independent set in $G\setminus A$.

	By Theorem~\ref{thm:cliquesIntro} there are at most $n^2 \m_0(c-1)$ maximal independent sets in $G\setminus A$ 
	and similarly at most $n^2 \m_0(c-1)$ maximal independent sets in $G\setminus (A\cup X)$.
	The lemma follows. 
\end{proof}

\noindent \textbf{Good $(\ell, k)$-partitions} Next, we introduce a definition that captures the property of graphs we can count by fixing several edges.
Informally, we say that a graph $H$ admits a good $(\ell, k)$-partition if there are $k$ edges and a set $A_0$ on at most $\ell$ vertices 
such that  the rest of the graph can be partitioned into non-neighborhoods of the edges. 
We show that the subgraphs admitting a good $(\ell, k)$-partition are easy to count.

\begin{definition}
	We say that a graph $H$ admits a \emph{good $(\ell, k)$-partition} if there exist $k$ edges $e_1, \dots, e_k$
	and a $(k+1)$-partition $\{A_0, A_1, \dots, A_k\}$ of the set $V(H)\setminus \left(\cup_{i=1}^{k} e_i\right)$ 
	such that $N_H(e_i) \cap A_i = \emptyset$ for every $i\in [k]$ and $|A_0|\le \ell$.
\end{definition}

\begin{lemma}\label{lemma:kEdges}
	Let $G$ be the complement of a $c$-closed graph. 
	The number of subsets $S \subseteq V$ for which graph
	$G[S]$ admits a good $(\ell, k)$-partition,  is bounded by $n^{\ell+2k}\cdot 2^{k(c-1)}$.
\end{lemma}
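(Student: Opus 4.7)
The plan is to directly enumerate the objects appearing in the definition of a good $(\ell,k)$-partition and bound each piece separately; each set $S$ under consideration will be written (with possibly some overcount) in terms of these pieces, so a product of the individual bounds suffices.

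First, I would enumerate over the $k$ edges $e_1,\dots,e_k$ of the partition. Since each edge has two endpoints, there are at most $n^{2k}$ ways to make this choice (treating the edges as ordered pairs; this only overcounts, which is fine for the upper bound). Next, I would enumerate the set $A_0$: since $|A_0|\le \ell$, there are at most $n^{\ell}$ such choices. So far we have an $n^{\ell+2k}$ factor; it remains to bound, for fixed edges and fixed $A_0$, the number of extensions to a full $S$.

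The heart of the argument is that, once the edges are fixed, each $A_i$ lives in a set of bounded size thanks to the complement of the $c$-closed hypothesis. Indeed, by definition $A_i\subseteq V(G[S])\setminus N_{G[S]}[e_i]\subseteq V(G)\setminus N_G[u_i,v_i]$, where $e_i=u_iv_i$. Because $G$ is the complement of a $c$-closed graph and $u_iv_i$ is an edge of $G$, the proposition restated in the complementary terminology gives $|V(G)\setminus N_G[u_i,v_i]|\le c-1$. Therefore $A_i$ is an arbitrary subset of a $(c-1)$-element set, leaving at most $2^{c-1}$ choices for each $i$, and so at most $2^{k(c-1)}$ choices for the tuple $(A_1,\dots,A_k)$.

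Finally, I would observe that $S=\bigl(\bigcup_{i=1}^k e_i\bigr)\cup A_0\cup\bigl(\bigcup_{i=1}^k A_i\bigr)$, so every admissible $S$ is determined (perhaps not uniquely) by the choices made above. Multiplying the bounds yields $n^{2k}\cdot n^{\ell}\cdot 2^{k(c-1)}=n^{\ell+2k}\cdot 2^{k(c-1)}$, as claimed. There is no serious obstacle here: the only substantive step is invoking the $c$-closure of the complement to pin down the ambient set of each $A_i$; the rest is a clean product bound over independent enumerations, and any overcounting arising from multiple good $(\ell,k)$-partitions of the same $S$ only strengthens the direction of the inequality we want.
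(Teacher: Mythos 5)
Your proof is correct and follows essentially the same route as the paper: fix the $k$ edges and the set $A_0$ (accounting for the $n^{\ell+2k}$ factor), then use the complementary $c$-closure property to confine each $A_i$ to the at most $c-1$ common non-neighbors of the endpoints of $e_i$, giving $2^{k(c-1)}$ choices. The only cosmetic difference is that the paper restricts to the set $U$ of vertices outside $A_0$ and the fixed edges before intersecting with the non-neighborhoods, which changes nothing in the bound.
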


\begin{proof}[Proof of Lemma~\ref{lemma:kEdges}]
	Let $H$ be induced subgraph of $G$ that let $e_1, \dots, e_k$  and $A_0, \dots, A_k$ 
	be the edges and sets defining a good $(\ell, k)$-partition of $H$.
	To prove the lemma, it suffices to show that the number of induced subgraphs that admit a good $(\ell, k)$-partition 
	with the same edges $e_1, \dots, e_k$ and the same set $A_0$ is bounded by $2^{k(c-1)}$.

	Denote with $U$ the vertices of $G$ that are neither incident to the edges $e_1, \dots, e_k$ nor in the set $A_0$, 
	i.e., $U = V(G)\setminus (A_0 \cup_{i=1}^k e_i)$. 
	By definition of a good $(\ell, k)$-partition, for any induced subgraph with a good $(\ell, k)$-partition 
	$e_1, \dots, e_k$ and $A_0, A'_1, \dots A'_k$ it holds $A'_i\subseteq U \setminus N_G(e_i)$ for each $i \in [k]$.
	Since $G$ is complement of a $c$-closed graph, it follows that $|U\setminus N(e_i)| \le c-1$ for each $i\in[k]$. 
	Hence, there are at most $2^{k(c-1)}$ induced subgraphs $G[S]$ that admit a good $(\ell, k)$-partition with $A_0$ and the edges $e_1, \dots, e_k$.
	The lemma follows.
\end{proof}
.

We obtain an FPT algorithm for bounded-degeneracy graphs in the following way.

\smallskip 
\noindent \textbf{Combinatorial bound}
Recall that the maximum number of maximal $d$-degenerate subgraph with in an arbitrary $\numOfV$-vertex graph is denoted by 
$\dd_d(\numOfV)$.
Pilipczuk and Pilipczuk~\cite{pilipczuk2012finding} show that for every $d$ there is a constant $\gamma_d<2$ such that $\dd_d(\numOfV) \le \gamma_d^{\numOfV}$.

\smallskip 
\noindent \textbf{FPT bound}
It can be shown that a $d$-degenerate graph is either a $4d$-star or admits a good $(4d, 2d)$-partition.
Then, by Lemmas~\ref{lemma:numberOfKstars} and~\ref{lemma:kEdges} we obtain an FPT upper bound.
\begin{restatable}{theorem}{FPTboundeDegeneracy}\label{thm:averageDegree}
	Let $G$ be the complement of a $c$-closed graph.
	The number of maximal $d$-degenerate subgraphs in $G$ is bounded by $O(n^{8d} \dd_d(2dc))$.
\end{restatable}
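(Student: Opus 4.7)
My plan is to combine the structural dichotomy of Lemma~\ref{lemma:uniformDegreePartition} with the two counting lemmas from the previous section. Fix a maximal induced subgraph $G[S]$ of degeneracy at most $d$. By Lemma~\ref{lemma:uniformDegreePartition}, either $G[S]$ is a $4d$-star or $G[S]$ admits a good $(4d, 2d)$-partition. I will bound the number of maximal $S$ of each type separately and then sum.

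For the star case, observe that any $4d$-star is in particular a proper $(4d+1)$-star: the head has size at most $4d \le (4d+1)-1$, so each tail vertex is adjacent to at most $4d$ head vertices. Moreover, since the class of $d$-degenerate graphs contains every independent set and adding an independent vertex to the tail cannot increase the degeneracy beyond the previous bound, maximality of $S$ as a $d$-degenerate subgraph forces the tail to be a maximal tail for this proper $(4d+1)$-star. Thus Lemma~\ref{lemma:numberOfKstars} with $k = 4d+1$ applies and yields at most $2 n^{4d+3} \m_0(c-1) = O(n^{4d+3} \cdot 3^{c/3})$ such maximal subgraphs.

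For the good-partition case, the set $S$ is a maximal $d$-degenerate subgraph that admits a good $(4d, 2d)$-partition, so Lemma~\ref{lemma:kEdges+} applies directly with $\ell = 4d$ and $k = 2d$. This gives at most $O(n^{4d + 4d} \cdot \dd_d(2d c)) = O(n^{8d}\cdot \dd_d(2dc))$ maximal subgraphs of this type. Summing the two bounds, the good-partition term dominates (since $\dd_d(2dc)$ already contains the $3^{c/3}$ factor up to constants and $8d \ge 4d+3$ for $d \ge 1$), producing the claimed bound $O(n^{8d} \cdot \dd_d(2dc))$.

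The main subtlety I anticipate is the routine but important check that the maximality assumption on $S$ transfers correctly into the hypotheses of the two counting lemmas: in the star case one must argue that ``maximal $d$-degenerate'' implies ``maximal tail for a proper $(4d+1)$-star,'' and in the partition case one must argue that maximality survives the restriction to the subgraph induced by $A_0$, the $2d$ fixed edges, and their common non-neighborhoods. Both follow because removing vertices outside $S$ cannot affect the maximality of $S$, exactly as in the proof of Theorem~\ref{thm:boundedDegree}.
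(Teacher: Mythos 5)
Your decomposition is exactly the paper's: invoke Lemma~\ref{lemma:uniformDegreePartition} to split into the $4d$-star case and the good-$(4d,2d)$-partition case, bound the latter by Lemma~\ref{lemma:kEdges+} with $(\ell,k)=(4d,2d)$, bound the former by Lemma~\ref{lemma:numberOfKstars}, and observe that the partition term $O(n^{8d}\dd_d(2dc))$ dominates. The partition half of your argument is correct and matches the paper.

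The justification you give for the star half, however, does not hold. You assert that ``adding an independent vertex to the tail cannot increase the degeneracy beyond the previous bound,'' and from this deduce that maximality of $S$ among $d$-degenerate subgraphs forces the tail to be maximal. But a legitimate tail extension of a proper $(4d+1)$-star may have up to $4d$ neighbours in the head, and attaching such a vertex to a $d$-degenerate graph can strictly increase the degeneracy: for $d=1$, if $G[S]$ is the path $b_1\,a\,b_2$ with head $\{b_1,b_2\}$ and a vertex $v\notin S$ is adjacent to exactly $b_1$ and $b_2$, then $\{a,v\}$ is a strictly larger proper-$5$-star tail while $G[S\cup\{v\}]$ is a $4$-cycle and hence not $1$-degenerate. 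So ``$G[S\cup\{v\}]$ is a proper $(4d+1)$-star'' does not contradict ``$S$ is a maximal $d$-degenerate subgraph,'' and your inference that the tail is maximal does not follow \emph{for that choice of head}; one would have to argue that \emph{some} head of $G[S]$ yields a maximal tail. This is precisely the point where the degenerate case differs from the treewidth case: there the reduction is sound because every proper $(t+1)$-star has treewidth at most $t$, whereas a proper $(4d+1)$-star is in general only $4d$-degenerate, not $d$-degenerate. To be fair, the paper's own one-line proof asserts the same reduction without addressing this, so you have reproduced its argument faithfully; but the specific reason you supply is false as stated, and this step needs a more careful treatment than either account provides.
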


\noindent \textbf{Enumeration}
Maximal $d$-degenerate subgraphs can be listed in time $O(mn^{d+2})$ per  maximal subgraph~\cite{conte2019proximity}.
We obtain the following corollary.
\begin{restatable}{corollary}{enumerationDegeneracy}
	For each fixed integer $d$, there is a constant $\gamma_d < 2$ and an
FPT algorithm running
 in time $O(n^{9d+4} \cdot \gamma_d^{2dc})$ for enumerating all maximal subgraphs with co-degeneracy at most $d$ in a $c$-closed graph $G$.
\end{restatable}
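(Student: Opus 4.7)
The plan is to combine the combinatorial FPT bound from Theorem~\ref{thm:averageDegree} with the black-box polynomial-delay enumeration algorithm of Conte et al.~\cite{conte2019proximity}. Working in the complementary terminology established in the paper, the task reduces to enumerating all maximal induced subgraphs of bounded degeneracy in the complement of a $c$-closed graph; call this graph $G'$ (on $n$ vertices and at most $m = \binom{n}{2}$ edges). Since the Conte et al. algorithm enumerates all maximal $d$-degenerate induced subgraphs in a generic $n$-vertex, $m$-edge graph with delay $O(mn^{d+2})$ per output, running it as a black box on $G'$ yields an enumeration of exactly the sets we want.

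Having set this up, the running time is simply (delay per output) $\times$ (number of maximal $d$-degenerate subgraphs of $G'$). For the latter, Theorem~\ref{thm:averageDegree} gives an upper bound of $O(n^{8d}\dd_d(2dc)) \le O(n^{8d}\gamma_d^{2dc})$, where $\gamma_d<2$ is the Pilipczuk-Pilipczuk constant~\cite{pilipczuk2012finding}. Multiplying by the delay $O(mn^{d+2}) = O(n^{d+4})$ and noting that $d$ is a fixed constant, we obtain
\[
O\!\left(n^{8d} \cdot \gamma_d^{2dc} \cdot n^{d+4}\right) \;=\; O\!\left(n^{9d+4} \cdot \gamma_d^{2dc}\right),
\]
which matches the claimed bound. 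Because $\gamma_d < 2$ is an absolute constant depending only on $d$, and the $n$-exponent depends only on~$d$, this qualifies as an FPT algorithm in the parameter $c$.

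There is essentially no difficulty in this argument, since both ingredients are already in place: the hard combinatorial work was done in proving Theorem~\ref{thm:averageDegree} (via Lemmas~\ref{lemma:numberOfKstars}, \ref{lemma:kEdges+}, and~\ref{lemma:uniformDegreePartition}), and the enumeration engine is imported from~\cite{conte2019proximity}. The only points to verify are that the Conte et al.\ bound applies uniformly (it does, since it holds for every input graph on $n$ vertices and~$m$ edges), and that the $O(mn^{d+2}) \le O(n^{d+4})$ delay can be folded cleanly into the product. No separate casework on proper $(4d{+}1)$-stars versus good $(4d,2d)$-partitions is needed at the algorithmic level, since Theorem~\ref{thm:averageDegree} already absorbs that structural dichotomy into the global count of outputs.
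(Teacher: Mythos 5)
Your proposal is correct and follows exactly the paper's route: the paper likewise obtains the corollary by multiplying the FPT bound $O(n^{8d}\gamma_d^{2dc})$ of Theorem~\ref{thm:averageDegree} by the $O(mn^{d+2}) = O(n^{d+4})$ per-output delay of the Conte et al.\ enumeration algorithm applied black-box to the complement graph. The arithmetic $n^{8d}\cdot n^{d+4} = n^{9d+4}$ matches the stated bound, so nothing is missing.
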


\section{Bounded co-treewidth}
\label{section:treewidth}
We give FPT algorithms for enumerating all maximal subgraphs of bounded treewidth in the complement of a $c$-closed graph using (only) the three-step approach.
For the combinatorial bound, 
we use the trivial upper bound $2^{\numOfV}$ for the number of maximal subgraphs of bounded treewidth in an $\numOfV$-vertex graph. 
For the enumeration, we are unaware of any polynomial delay algorithms for enumerating maximal subgraphs of bounded treewidth. 
Nevertheless, the proof of the FPT bound is easily turned into an FPT enumeration algorithm.
Therefore, we are only concerned with proving the FPT bound. 
In Appendix~\ref{section:localTreeWidth}, we extend the upper bound (and consequently the algorithm) to the subgraphs of bounded local treewidth. 

\smallskip \noindent \textbf{FPT bound}
To count star-like maximal subgraphs with treewidth at most $t$ in the complement of a $c$-closed graph, we use Lemma~\ref{lemma:numberOfKstars}.
The counting reduces to counting maximal independent sets in smaller graphs.

To count the non-star-like graphs with treewidth at most $t$, 
we use Lemma~\ref{lemma:kEdges}. The lemma shows how to count all subgraphs that contain several edges and show that any other vertex is non-adjacent to at least one of the fixed edges. 

The upper bound is proved by combining the two mentioned cases. 
More precisely, we show that any subgraph of bounded treewidth is counted by either Lemma~\ref{lemma:numberOfKstars} or Lemma~\ref{lemma:kEdges}.

\smallskip

We present the main theorem of this section. 

\begin{theorem}\label{thm:boundedTW}
	Let $G$ be the complement of a $c$-closed graph and let $t\in \N$. 
	The number of maximal subsets $S\subseteq V(G)$ for which $\tw(G[S]) \le t$ is at most $3n^{t+4} 2^{2(c-1)}$.	
\end{theorem}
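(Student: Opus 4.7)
My plan is to combine a structural dichotomy with the two preceding lemmas. The dichotomy will assert that any graph $H$ with $\tw(H) \le t$ is either a $(t+1)$-star (equivalently, a proper $(t+2)$-star with head of size at most $t+1$) or admits a good $(t,2)$-partition. Granted this, Lemma~\ref{lemma:numberOfKstars} applied with $k=t+2$ bounds the first case by $2n^{t+4}\m_0(c-1)$, and Lemma~\ref{lemma:kEdges} applied with $(\ell,k)=(t,2)$ bounds the second by $n^{t+4}\cdot 2^{2(c-1)}$. Using $\m_0(c-1) \le 3^{(c-1)/3} \le 2^{2(c-1)}$, the two contributions sum to at most $3n^{t+4}\cdot 2^{2(c-1)}$, as desired.

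To prove the dichotomy, I would take a non-redundant tree decomposition $(T,\W)$ of $H$ of width at most $t$ (no bag is a subset of an adjacent bag); non-redundancy forces $|W_t \cap W_{t'}| \le t$ for every tree-edge $tt'$, since the intersection is a strict subset of each of the two bags. For such an edge, let $S_{tt'} := W_t \cap W_{t'}$ and let $A_1, A_2$ denote the vertex sets exclusive to each side of the split; then $S_{tt'}$ separates $A_1$ from $A_2$ in $H$. If some tree-edge has both $H[A_1]$ and $H[A_2]$ containing an edge, pick edges $e_1 \subseteq A_1$ and $e_2 \subseteq A_2$; by the separator property $N_H(e_1) \cap A_2 = N_H(e_2) \cap A_1 = \emptyset$, so $(A_0, X_1, X_2) := (S_{tt'}, A_2 \setminus e_2, A_1 \setminus e_1)$ is a good $(t,2)$-partition. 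Otherwise every tree-edge has at least one edgeless side; orient each tree-edge toward its edgeless side and pick a sink $r$ in the resulting orientation on $T$. Rooting $T$ at $r$, every exclusive set $A^{(t')}$ for a child $t'$ of $r$ is edgeless in $H$, and the separator property forbids edges between distinct $A^{(t'_i)}, A^{(t'_j)}$; hence every edge of $H$ has at least one endpoint in $W_r$, so $W_r$ is a vertex cover of $H$ of size at most $t+1$ and $H$ is a $(t+1)$-star.

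\textbf{Main obstacle.} The principal technical subtlety lies in the counting step: matching the maximality of $S$ as a $\tw \le t$ induced subgraph with the maximality of the set of tails required by Lemma~\ref{lemma:numberOfKstars}. When a vertex cover $A$ of $G[S]$ has size at most $t$, extending the tails $B = S \setminus A$ by a non-adjacent vertex $u$ preserves $A$ as a vertex cover and keeps the treewidth bound, so by maximality of $S$ no such $u$ exists and $B$ is a maximal independent set in $G \setminus A$ in the sense the lemma requires. The delicate subcase is when the minimum vertex cover of $G[S]$ has size exactly $t+1$: then extending $B$ could push the treewidth to $t+1$, which is permitted by the proper-star structure but violates our constraint. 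My plan for this is to sharpen the dichotomy and show that $\tw(G[S]) \le t$ with $\tau(G[S]) = t+1$ in fact also forces a good $(t,2)$-partition, so such subgraphs are absorbed by the Lemma~\ref{lemma:kEdges} count; the idea is that a minimum vertex cover of size exactly $t+1$ forces each head vertex to have a tail neighbor, and a judicious pair of tail-edges chosen with respect to the tree-decomposition structure at the sink bag will have common external neighborhood of size at most~$t$. A secondary technical point — the case in which the sink $r$ happens to be a leaf of $T$, so the argument in the preceding paragraph does not directly apply — can be resolved either by rechoosing the sink to have multiple children or by a short induction on the size of the tree decomposition.
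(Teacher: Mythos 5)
There is a genuine gap, and it sits exactly where you suspected. Your dichotomy asserts only that $G[S]$ is a $(t+1)$-star or admits a good $(t,2)$-partition, whereas the paper's Claim~\ref{claim:separatorOrStar} proves the star is a \emph{proper} $(t+1)$-star (every tail adjacent to at most $t$ head vertices; this follows from the tree decomposition because a tail adjacent to all $t+1$ vertices of the sink bag $W_s$ would force some bag to contain $W_s\cup\{v\}$, of size $t+2$). Properness is not a cosmetic refinement: it is what makes the maximality transfer work. Since every proper $(t+1)$-star has treewidth at most $t$, a set $S$ that is maximal for $\tw\le t$ and induces a proper $(t+1)$-star must have a maximal tail set, so Lemma~\ref{lemma:numberOfKstars} applies with $k=t+1$; moreover, for $|A|=t+1$ the lemma discards the vertices adjacent to \emph{all} of $A$, which is precisely where the dangerous extensions live. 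Your route instead views a $(t+1)$-star as a proper $(t+2)$-star and invokes the lemma with $k=t+2$, but then a vertex $u\notin S$ adjacent to all $t+1$ head vertices and to no tail extends the tail set as a proper $(t+2)$-star while pushing the treewidth to $t+1$; such an $S$ is maximal for treewidth yet has a non-maximal tail set for every admissible head, so it is simply not counted.

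Your proposed repair --- that $\tw(G[S])\le t$ together with $\tau(G[S])=t+1$ forces a good $(t,2)$-partition --- is false. Take $t=2$ and let $H$ consist of a triangle $\{a_1,a_2,a_3\}$ together with, for each pair $\{a_i,a_j\}$, five pairwise non-adjacent "tail" vertices adjacent to exactly $a_i$ and $a_j$. This is a proper $3$-star, so $\tw(H)=2$; every $a_i$ has a tail neighbour, so $\tau(H)=3$. Yet $H$ has no good $(2,2)$-partition: every edge of $H$ meets the triangle, say in $H_1$ and $H_2$ for the two chosen edges, and a tail of type $\{a_i,a_j\}$ avoids the edge $e_r$ only if $H_r=\{a_k\}$ for the third index $k$; two sets $H_1,H_2$ can realise at most two of the three required singletons, so some type contributes at least three tails adjacent to both edges, which cannot fit into $A_0$ with $|A_0|\le 2$. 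So this case is not absorbed by Lemma~\ref{lemma:kEdges}, and your argument misses these subgraphs entirely. The remainder of your write-up (the separator/orientation argument producing the vertex cover $W_r$, the arithmetic combining the two lemmas, and the treatment of the $\tau\le t$ case) matches the paper and is fine; the worry about the sink being a leaf is also harmless, since the exclusive set of its unique neighbour already contains all of $S\setminus W_s$. What is missing is the properness argument, which is the one additional step the paper's proof supplies.
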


Before we prove the theorem, we mention that the class of all graphs with treewidth at most $t$ contains all ``proper'' $(t+1)$-stars but not all $(t+1)$-stars. Simply, $K_{t+2}$ is a $(t+1)$-star but has treewidth $t+1$. The proof relies on the following claim.

\begin{claim}\label{claim:separatorOrStar}
	Let $S\subseteq V(G)$ such that $\tw(G[S]) \le t$. 
	Then, $G[S]$ is either a proper $(t+1)$-star or admits a good $(t, 2)$-partition. 
\end{claim}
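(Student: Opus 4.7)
The plan is to work with a reduced tree decomposition $(T,\W)$ of $G[S]$ of width at most~$t$: starting from any width-$t$ tree decomposition, I would iteratively contract every tree edge $\{t_1,t_2\}$ for which $W_{t_1}\subseteq W_{t_2}$ (merging preserves the width). In the resulting decomposition no bag contains an adjacent bag, so $|W_{t_1}\cap W_{t_2}|\le t$ for every tree edge---this tightens the usual separator bound by one, which is what will let us land in a good $(t,2)$-partition rather than merely a $(t{+}1,2)$-partition. If $T$ has a single node, then $|S|\le t+1$ and $G[S]$ is trivially a proper $(t+1)$-star with head $S$ and no tails.

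Assume therefore that $T$ has at least one edge. For a tree edge $\{t_1,t_2\}$, write $X:=W_{t_1}\cap W_{t_2}$ and let $V_1,V_2$ be the two sides of the corresponding separator (vertices of bags on one subtree only, minus $X$). Call an edge of $G[S]$ \emph{internal} to side~$i$ if both endpoints lie in $V_i$. The first case is that some tree edge carries internal edges $e_1\subseteq V_1$ and $e_2\subseteq V_2$ on both sides. Then I would set $A_0:=X$, $A_1:=V_2\setminus e_2$ and $A_2:=V_1\setminus e_1$: the separator property ensures that no vertex of $V_1$ has a neighbor in $V_2$ (and vice versa), so $N_{G[S]}(e_1)\cap A_1 = N_{G[S]}(e_2)\cap A_2=\emptyset$, while $|A_0|\le t$ by reducedness. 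This yields the required good $(t,2)$-partition.

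Otherwise every tree edge has internal edges on at most one side. If some tree edge has no internal edges at all, then $X$ covers every edge of $G[S]$ and has size at most $t$, so $G[S]$ is a $t$-star and hence a proper $(t+1)$-star (tails vacuously have at most $t$ head-neighbors). Otherwise each tree edge has internal edges on exactly one side, and I would orient each tree edge towards that side. The finite oriented tree $T$ must then contain a sink $t_0$, so for every neighbor $t'$ of $t_0$ the $t'$-side has no internal edges. I would then show that $W_{t_0}$ is a vertex cover of $G[S]$: any edge $uv$ is contained in some bag $W_s$; if $s=t_0$, both endpoints are in $W_{t_0}$; otherwise $s$ is in a subtree attached to $t_0$ via a neighbor $t'$, and not both $u,v$ can avoid $W_{t_0}$ (else $uv$ would be internal to the $t'$-side). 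For propriety, any tail $v$ lies in a unique subtree of $T\setminus t_0$ attached via some $t'$, and any head-neighbor of $v$ must appear in a bag containing $v$ and also in $W_{t_0}$, hence it lies in $W_{t_0}\cap W_{t'}$, a set of size at most $t$. So $G[S]$ is a proper $(t+1)$-star.

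The main obstacle is the one-vertex gap between the generic $t{+}1$ separator bound coming from treewidth and the $|A_0|\le t$ demanded by a $(t,2)$-partition; reducing the decomposition is the subtle ingredient that closes this gap, and is what ultimately gives the $n^{t+4}$ factor (rather than $n^{t+5}$) in Theorem~\ref{thm:boundedTW}. Everything else is bookkeeping around the tree decomposition and the standard sink-in-an-oriented-tree argument.
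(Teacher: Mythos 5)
Your proof is correct and follows essentially the same route as the paper's: reduce the tree decomposition so that adjacent bags are incomparable (hence every adhesion set $W_{t_1}\cap W_{t_2}$ has size at most $t$), then split into the cases of a tree edge with internal edges on both sides (good $(t,2)$-partition), on neither side ($t$-star), or on exactly one side (orient each tree edge towards the side carrying edges, take a sink $t_0$, and use $W_{t_0}$ as the head of a proper $(t+1)$-star). The only noticeable deviation is your properness step, which directly locates every head-neighbor of a tail inside $W_{t_0}\cap W_{t'}$ rather than deriving a contradiction from a bag strictly containing $W_{t_0}$ as the paper does, but this is a minor variant of the same argument.
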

\begin{claimproof}[Proof of Claim~\ref{claim:separatorOrStar}]
	Let $(T, \W)$ be a tree decomposition of $G[S]$ of width at most $t$;  
	$W_a$ is the bag corresponding to vertex $a \in V(T)$ and $\W$ is the set of bags, i.e., $\W= \{W_a : a\in V(T)\}$.	
	Without loss of generality, we may assume that for any edge $ab \in E(T)$ the bags $W_a$ and $W_b$ are crossing,
	i.e., it holds $W_a \setminus W_b \neq \emptyset \neq W_b \setminus W_a$. 
	On the contrary, if $W_a \subseteq W_b$ we can simply remove the vertex $a$ and the bag $W_a$ and 
	reconnect the tree in the natural way to obtain a tree decomposition with the same width and a smaller tree.

	Let $ab\in E(T)$ and let $T_a, T_b$ be the trees in $T\setminus ab$.
	Tree $T_a$ (resp. $T_b$) is the tree in $T\setminus ab$ containing the vertex $a$ (resp. $b$).
	It is easy to check that there is no edge between $U_a := \cup_{t\in V(T_a)} W_t \setminus (W_a \cap W_b)$ 
	and $U_b := \cup_{t\in V(T_b)} W_t \setminus (W_a \cap W_b)$. 
	In other words, $W_a \cap W_b$ is a separator of $G[S]$ whenever $U_a \neq \emptyset \neq U_b$.
	Since the adjacent bags in $T$ are crossing we do have $U_a \neq \emptyset \neq U_b$.
	Moreover, since $|W_a|, |W_b|\le t+1$ and $W_a\setminus W_b \neq \emptyset$ it follows that $|W_a\cap W_b| \le t$.
	Thus, $W_a\cap W_b$ is a separator of size at most $t$ in $G[S]$ for every $ab \in E(T)$.
	If $U_a$ and $U_b$ both contain an edge, say $e_1$ and $e_2$ respectively, then $G[S]$ admits a good $(t, 2)$-partition.
	Namely, we can set $A_0 = W_a \cap W_b$, $A_1 = U_b$, and  $A_2 =  U_a$.
	Therefore, we assume that for each edge $ab \in E(T)$ at least one of $U_a$ or $U_b$ is an independent set.
	We show, that this implies that $G[S]$ is a proper $(t+1)$-star.
		
	\smallskip
	If for some $ab \in E(T)$ both $U_a$ and $U_b$ are independent sets, then so is $U_a \cup U_b$. 
	As $U_a \cup U_b = S \setminus (W_a \cap W_b)$ and $|W_a\cap W_b|\le t$,  it follows that $G[S]$ is a $t$-star.
	Hence, for the rest of the proof we assume that for each edge $ab \in E(T)$ either $U_a$ or $U_b$ is not an independent set. 
	Combining with the previous paragraph, we have that for each $ab \in E(T)$ exactly one of 
	$U_a$, $U_b$ is an independent set and the other one is not. 
	
	Such a property gives a natural orientation of the edges in $T$. 
	In particular, if $U_a$ is an independent set we orient the edge $ab$ as $(a,b)$ and say that edge $ab$ is oriented \emph{towards} $b$. Otherwise we orient $ab$ as $(b,a)$ as say that $ab$ is oriented \emph{towards} $a$.
	Since $T$ is a tree, there is a vertex $s \in V(T)$ such that all incident edges are oriented towards $s$. 
	(Start with an arbitrary vertex $x\in V(T)$ and move to any vertex $y \in N_T(x)$ such that $xy$ is oriented towards $y$. We keep iterating until we encounter a vertex $s$ such that all incident edges are oriented towards $s$. The process terminates as $T$ is a tree.)
	We show that $S \setminus W_s$ is an independent set. 

	Suppose on the contrary that there is an edge $uv \in G[S]\setminus W_s$. 
	By the definition of tree decomposition $(T, \W)$, 
	the vertices $u$ and $v$ are both contained in some bag $W_p$ for $p \in V(T)$.
	Moreover, it holds that $p \neq s$. 
	Let $q$ be the neighbor of $s$ on the undirected $s$-$p$ path in $T$ (possibly $q = p$). 
	Then, $U_{q}$ is not an independent set: we have $uv \in U_{q}$ since $W_p\setminus W_s \subseteq U_{q}$.
	It follows that the edge $sq$ is oriented from $s$ to $q$. A contradiction with the choice of $s$. 
	As $|W_s| \le t+1$ we conclude that $S$ is a $(t+1)$-star.

	\smallskip
	 It remains to show that the $(t+1)$-star is proper, i.e.,
	 that every vertex $v\in S\setminus W_s$ is adjacent to at most $t$ vertices in $W_s$.
	 If $|W_s| \le t$, then there is nothing to prove, so assume $|W_s| = t+1$.
	 For the sake of contradiction, let $v \in S\setminus W_s$ be a vertex adjacent to all $t+1$ vertices of $W_s$. 
	 Let $W_r$ be the bag containing $v$ that is closest to the bag $W_s$ in the tree $T$. 
	 Let $T_v$ be the tree in $T\setminus s$ that contains $r$.
	 Since $v\not \in W_s$, for any bag $W_x$ that contains $v$ it holds $x\in T_v$.
	 Moreover, the unique $s-x$ path in $T$ contains the vertex $r$.
	 By the properties of tree decomposition, and since $v$ is adjacent to every vertex in $W_s$ it follows that $W_s \subset W_r$.
	 Thus, $|W_r| \ge |W_s \cup \{v\}| = t+2$. 
	 A contradiction with the width of $(T, \W)$. 
\end{claimproof}

\begin{proof}[Proof of Theorem~\ref{thm:boundedTW}]
	Let $S$ be a maximal subset of vertices of $G$ such that $\tw(G[S]) \le t$.
	By Claim~\ref{claim:separatorOrStar}, either $G[S]$  admits a good $(t, 2)$-partition or
	$S$ induces a proper $(t+1)$-star. 
	The number of sets $S$ that admit a good $(t,2)$-partition is at most $n^{t+4}2^{2c-2}$ by Lemma~\ref{lemma:kEdges}.

	Let us consider the case when $G[S]$ is a proper $(t+1)$-star.
	Since $S$ is a maximal set with property that $G[S] \in \CC$ it follows that 
	$S$ is also a set that induces a proper $(t+1)$-star with maximal tail. 
	It is not hard to see that the class of graph with bounded treewidth contains all proper $(t+1)$-stars. 
	The number of sets $S$ that induce a proper $(t+1)$-star with maximal tail is at most $2n^{t+3} 2^{c-1}$ by Lemma~\ref{lemma:numberOfKstars}.
	The theorem follows.
\end{proof}


\begin{example}\label{exampleTW}
	Recall that $K_{a,b}$ is the complement of a $1$-closed graph, and that $\tw(K_{a, b}) = \min \{a,b\}$ for any $a,b \in \N$.
	Trivially, $K_{\ell, t+1}$ contains at least $\Omega(\ell^t) = \Omega\left( (|V(K_{\ell, t+1})| - t-1)^t\right)$ maximal induced subgraphs with treewidth at most $t$. Hence, the dependence on $n^t$ in Theorem~\ref{thm:boundedTW} is necessary.
\end{example}

\smallskip \noindent \textbf{Enumeration}
Let us explain how to turn the above proof in an enumeration algorithm. 
In the proof of Theorem~\ref{thm:boundedTW} we showed that
any maximal induced subgraph of treewidth at most $t$ is either a proper $(t+1)$-star or admits a good $(t, 2)$-partition.

Enumeration of all proper $(t+1)$-stars reduces to the enumeration of all maximal independnet sets in the complement of smaller $c$-closed graphs by the same reduction as in the proof of Lemma~\ref{lemma:numberOfKstars}.
Thus, listing all proper $(t+1)$-stars takes $O(n^{t+3} \m_0(c-1))$ time.

To enumerate all subgraphs admitting a good $(t, 2)$-partition we use the defintion of 
the good $(t, 2)$-partition and the $c$-closure condition.
For two edges $e, f$ there are at most $2c$ vertices that are non-adjacent to 
either $e$ or $f$ by the complementary $c$-closure property.
After fixing a set $A$ of size at most $t$ and particular two edges $e,f$,
by brute-force we can find all subgraphs with treewidth at most $t$ that admit a good $(t, 2)$-partition 
with the set $A$ and the edges $e$ and $f$.
Since there are at most $2c$ vertices over which we have to apply the bruce-force this takes $O(2^{2c})$ time.
In total, going over all sets of size at most $t$ and every two edges $e,f$ takes $O(n^{t+4} 2^{2c})$ time. 

\begin{restatable}{corollary}{thmTreewidth}[Restatement of Theorem \ref{thm:max-treewidth}]
	For $c$-closed graphs and a fixed $t \ge 0$, there is an algorihtm running in time $O(n^{t+4}2^{2c})$ that outputs a set
	containing all maximal induced subgraphs with co-treewidth $\le t$.
\end{restatable}

\smallskip \noindent \textbf{Co-forests} 
Recall that the class of forests is equivalently defined as the class graphs with treewidth at most $1$, or as the class of graph with degeneracy at most $1$. 
In Appendix~\ref{section:forest}, we give stronger bound than the one given in Section~\ref{section:treewidth} and Section~\ref{section:degeneracy} for enumerating maximum co-forests in a $c$-closed graph.

\newpage
\appendix

\section{Moon-Moser Theorem}
\label{sec:proofOfMoonMoser}
\begin{theorem}\label{thm:moonMozer}
	$\m_0(\numOfV) \le 3^{\numOfV/3} \le 1.443^{\numOfV}$.	
\end{theorem}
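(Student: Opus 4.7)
The plan is a short induction on $\numOfV$, the number of vertices. The base cases $\numOfV \le 3$ can be checked directly (every $N$-vertex graph with $N\le 3$ has at most $3$ maximal independent sets, and one verifies $3 \le 3^{3/3}$).

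For the inductive step, let $v$ be a vertex of minimum degree $d$ in $G$. The key observation is that every maximal independent set $S$ must intersect $N_G[v]$: indeed, if $S$ contained no vertex of $N_G[v]$, then $v$ would be non-adjacent to every element of $S$, so $S\cup\{v\}$ would still be independent, contradicting maximality. Consequently, assigning to each maximal independent set $S$ some representative $u \in S\cap N_G[v]$ and noting that $S\setminus\{u\}$ is a maximal independent set of $G\setminus N_G[u]$, we obtain the recursive bound
\[
\m_0(G) \;\le\; \sum_{u \in N_G[v]} \m_0(G\setminus N_G[u]).
\]

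The main work is then a case split on $d$, in each case applying the induction hypothesis and checking the arithmetic. When $d=0$ every maximal independent set contains $v$, so $\m_0(G)\le \m_0(G-v)\le 3^{(\numOfV-1)/3}\le 3^{\numOfV/3}$. When $d=1$, with $N_G(v)=\{u\}$, the sum has two terms and each deleted closed neighborhood has size at least $2$, giving $\m_0(G)\le 2\cdot 3^{(\numOfV-2)/3}$, which is at most $3^{\numOfV/3}$ since $2\le 3^{2/3}$. When $d=2$, the sum has three terms and each deleted closed neighborhood has size at least $3$ (since every vertex has degree $\ge 2$), so $\m_0(G)\le 3\cdot 3^{(\numOfV-3)/3}=3^{\numOfV/3}$; this is the extremal case, matching $K_3$-disjoint-unions. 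When $d\ge 3$, each closed neighborhood $N_G[u]$ for $u\in N_G[v]$ has size at least $d+1$, so the bound becomes $\m_0(G)\le (d+1)\cdot 3^{(\numOfV-d-1)/3}$, and one only needs to verify the elementary inequality $d+1 \le 3^{(d+1)/3}$ for all $d\ge 3$ (true at $d=3$ with room to spare, and the right-hand side grows exponentially).

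The only real obstacle is the degree-$2$ case, which is where the bound is tight and therefore no slack is available; but the inequality balances exactly because $3 = 3^{3/3}$, so the induction goes through. The dependency on $d$ in the degree-$\ge 3$ case is handled by the trivial calculus inequality mentioned above. This completes the proof.
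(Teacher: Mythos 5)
Your proposal is correct and follows essentially the same route as the paper: induction on $\numOfV$ via a minimum-degree vertex $v$, the observation that every maximal independent set meets $N[v]$, the recursion $\m_0(G)\le\sum_{u\in N[v]}\m_0(G\setminus N[u])$, and the inequality $(d+1)\le 3^{(d+1)/3}$. The only cosmetic difference is that you split into cases on $d$ where the paper verifies $(\ell+1)3^{-(\ell+1)/3}\le 1$ uniformly for all $\ell$.
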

\begin{proof}
	We prove that $\m_0(\numOfV) \le 3^{\numOfV/3}$ by induction on $\numOfV$.
	Let $G$ be a graph on $\numOfV$ vertices and $v$ a vertex of minimum degree $\ell$ in $G$.
	Any maximal independent set $I$ intersects $N[v]$ in some vertex $w$. 
	As $I$ is a maximal independent set in $G$ then $I\setminus w$ is a maximal independent set in  $G\setminus N[w]$.
	Thus, we get the following recursive bound
	$$\m_0(G) \le \sum_{w\in N[v]} \m_0(G\setminus N[w]) \le \sum_{w\in N[v]} \m_0(\numOfV-|N[w]|) \le (\ell+1) \m_0(\numOfV-(\ell + 1)) \,,$$
	where in the last inequality we use $\m_0(\numOfV-|N[w]|) \le \m_0(\numOfV-(\ell+1))$ for all $w\in N[v]$ since $\ell$ is the minimum degree.
	By induction, we have $(\ell+1) \m_0(\numOfV-(\ell + 1)) \le (\ell + 1)\cdot 3^{\frac{\numOfV-(\ell+1)}{3}}$.
	The theorem follows since 
	$3^{\frac{\numOfV}{3}} (\ell+1) 3^{\frac{-(\ell+1)}{3}} \le 3^{\frac{\numOfV}{3}}  $ for all $\ell \in \mathbb{N}$.
\end{proof}

The proofs by Miller and M\" uller~\cite{millerMuller}
  and Moon and Moser~\cite{moonmoser} give a more refined bound by
  distinguishing the case analysis based on the divisibility of $n$ by
  $3$.

\section{Counting maximal subgraphs with degree at most 1}
\label{section:combinatorics}

In this section we prove Theorem~\ref{thm:generalizedInducedMatching} (or equivalently Theorem~\ref{thm:generalizedInducedMatching2}).
As this section does not use $c$-closure, we use $n$ instead of $\numOfV$ for the number of vertices in an arbitrary graph $G$ ($G$ is not necessarily $c$-closed).

\smallskip
We say that a set $S \subseteq V(G)$ is a \emph{generalized induced matching} 
if $\Delta(G[S]) \le 1$.
Moreover $S$ is a maximal generalized induced matching in $G$ if there is no set $S'\subseteq V(G)$
 such that $S\subset S'$ and $S'$ is a generalized induced matching.

We are interested in the number of maximal generalized induced matchings in a graph $G$, i.e., $\m_1(G)$. 
For a generalized induced matching $S$, we say that $v\in S$ is \emph{unmatched} if $v$ has no neighbors in $S$, 
and \emph{matched} if $v$ has a neighbor in $S$ -- such a neighbor is unique. 
A useful way to think about the \emph{maximal} generalized induced matchings is following:
\begin{observation}\label{obs:seesTwo}
	Let $S$ be a maximal generalized induced matching in $G$.
	Then, each vertex $w \in V(G)\setminus S$ is adjacent to either a matched vertex in $S$ 
	or two unmatched vertices of $S$.

	A converse holds as well. 
	Suppose that $S$ is a generalized induced matching.
	If every vertex $w\not \in S$ is adjacent 
	to a matched vertex in $S$ or at least two unmatched vertices, then $S$ is maximal. 
\end{observation}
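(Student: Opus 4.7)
The plan is to prove both directions at once by establishing the following equivalence: for any $w\in V(G)\setminus S$, the set $S\cup\{w\}$ fails to be a generalized induced matching if and only if $w$ is adjacent to a matched vertex of $S$, or else adjacent to at least two unmatched vertices of $S$. Since by definition $S$ is maximal precisely when $S\cup\{w\}$ is not a generalized induced matching for every $w\notin S$, the forward direction and the converse in the observation are just the two logical halves of this single equivalence.

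To establish the equivalence, I would simply count degrees in $G[S\cup\{w\}]$. Let $k_m$ and $k_u$ be the number of neighbors of $w$ in $S$ that are matched and unmatched (within $S$), respectively. In $G[S]$, every matched vertex has degree exactly $1$ and every unmatched vertex has degree exactly $0$, so after adding $w$ the degrees change as follows: each matched neighbor of $w$ moves from degree $1$ to degree $2$; each unmatched neighbor of $w$ moves from degree $0$ to degree $1$; all non-neighbors of $w$ in $S$ keep their degree; and $w$ itself has degree $k_m+k_u$. Thus $\Delta(G[S\cup\{w\}])\ge 2$ iff $k_m\ge 1$ or $k_m+k_u\ge 2$, which (after noting that $k_m\ge 1$ already implies the conclusion) simplifies to $k_m\ge 1$ or $k_u\ge 2$. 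This is exactly the stated condition on $w$.

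With the equivalence in hand, both directions fall out immediately. If $S$ is a maximal generalized induced matching, then for every $w\notin S$ the set $S\cup\{w\}$ is not a generalized induced matching, so by the equivalence $w$ is adjacent to a matched vertex or to two unmatched vertices of $S$. Conversely, if every $w\notin S$ satisfies the stated adjacency condition, then $S\cup\{w\}$ fails to be a generalized induced matching for every such $w$, so no single-vertex extension of $S$ lies in the class, and $S$ is maximal.

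There is no real obstacle here: the statement is a definitional unpacking, and the only thing to be careful about is accounting for both ways the degree can rise above $1$ (namely $w$ itself acquiring degree $\ge 2$, or some matched vertex of $S$ being pushed from degree $1$ to degree $2$). The degree analysis in the previous paragraph captures both, so the proof is essentially a one-paragraph case check.
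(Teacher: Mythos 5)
Your proof is correct, and since the paper states this as an unproven Observation (it is treated as immediate from the definitions), your degree-count argument is exactly the definitional unpacking the authors had in mind. The only implicit step worth being aware of is that maximality reduces to single-vertex extensions because the property $\Delta(G[S])\le 1$ is hereditary, which your argument uses correctly.
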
 

Before we prove the main theorem, we prove three simple lemmas and an easy proposition.

\begin{lemma}\label{lemma:disconnected}
	Let $U$ be a connected component of a graph $G$. 
	Then, $\m_1(G) = \m_1(G[U]) \cdot \m_1(G\setminus U)$.
\end{lemma}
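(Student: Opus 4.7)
The plan is to exhibit a bijection between maximal generalized induced matchings $S$ in $G$ and pairs $(S_1,S_2)$, where $S_1$ is a maximal generalized induced matching in $G[U]$ and $S_2$ is one in $G\setminus U$. Once that bijection is in hand, the multiplicative identity $\m_1(G) = \m_1(G[U])\cdot \m_1(G\setminus U)$ is immediate by counting.

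The forward map is the obvious one: given a maximal generalized induced matching $S$ in $G$, set $S_1 = S\cap U$ and $S_2 = S\cap (V(G)\setminus U)$. Since $U$ is a connected component of $G$, no edge of $G$ has one endpoint in $U$ and one outside, so the edges of $G[S]$ split as a disjoint union of those of $G[S_1]$ and those of $G[S_2]$. In particular, $\Delta(G[S_i])\le \Delta(G[S])\le 1$ for $i=1,2$, so each $S_i$ is a generalized induced matching in its ambient graph. Maximality of $S_1$ in $G[U]$ follows from the converse part of Observation~\ref{obs:seesTwo}: every $w\in U\setminus S_1$ is a vertex of $V(G)\setminus S$, and since $w\in U$ its neighbors in $G$ all lie in $U$ and in fact in $S_1$; applying the observation (in $G$) to $w$ then shows $w$ witnesses non-extendability in $G[U]$ as well. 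The argument for $S_2$ is identical.

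For the inverse map, given maximal generalized induced matchings $S_1$ in $G[U]$ and $S_2$ in $G\setminus U$, set $S = S_1\cup S_2$. The lack of edges between $U$ and its complement ensures $\Delta(G[S]) = \max\{\Delta(G[S_1]),\Delta(G[S_2])\}\le 1$, so $S$ is a generalized induced matching in $G$. To check maximality, take any $w\in V(G)\setminus S$: either $w\in U\setminus S_1$, in which case the maximality of $S_1$ in $G[U]$ together with Observation~\ref{obs:seesTwo} supplies a neighbor witness inside $S_1\subseteq S$; or $w\in (V(G)\setminus U)\setminus S_2$, and symmetrically the maximality of $S_2$ supplies a witness inside $S_2\subseteq S$. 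Either way, $S\cup\{w\}$ is not a generalized induced matching, so $S$ is maximal in $G$.

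The two maps are clearly inverse to one another (they just partition and reassemble along the component boundary), hence define a bijection, and counting yields $\m_1(G)=\m_1(G[U])\cdot \m_1(G\setminus U)$. There is no real obstacle here; the only thing worth being careful about is invoking both directions of Observation~\ref{obs:seesTwo} correctly, and using that fact that $U$ being a full connected component means every neighbor (in $G$) of a vertex in $U$ is again in $U$, so local witnesses inside one part remain valid in the other.
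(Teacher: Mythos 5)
Your proof is correct and is essentially the paper's argument, just written out in full: the paper's proof is the one-line observation that a maximal generalized induced matching in $G$ is exactly the disjoint union of one in $G[U]$ and one in $G\setminus U$, since $U$ being a connected component means there are no edges across the partition. (Your detour through Observation~\ref{obs:seesTwo} is not needed --- non-extendability transfers directly because any vertex $w\notin S$ has all its $G$-neighbors on its own side of the partition --- but it is harmless.)
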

\begin{proof}
	Any maximal generalized induced matching $S$ in $G$ 
	is the disjoint union of a maximal generalized induced matching $S\cap U$ in $G[U]$,
	and a maximal generalized induced matching $S\setminus U$ in $G\setminus U$. 
\end{proof}

\begin{lemma}\label{lemma:twin}
	Let $u, v$ be twin vertices in $G$, i.e., $uv\in E$ and $N(v) = N(u)$.
	Then, $\m_1(G) \le \m_1(G\setminus uv)$.
\end{lemma}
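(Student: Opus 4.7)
The plan is to show that the identity map $S \mapsto S$ on vertex subsets yields an injection from the collection of maximal generalized induced matchings of $G$ into those of $G' := G \setminus uv$; once this is established, the inequality $\m_1(G) \le \m_1(G \setminus uv)$ follows immediately. Since deleting an edge can only reduce degrees, $\Delta(G'[S]) \le \Delta(G[S]) \le 1$, so any maximal generalized induced matching of $G$ is at least a generalized induced matching of $G'$; the real content is verifying that maximality is preserved. By the converse direction of Observation~\ref{obs:seesTwo}, I would only need to check that every $w \notin S$ is adjacent in $G'$ to either a matched vertex of $G'[S]$ or two unmatched vertices of $G'[S]$.

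I would split the verification into three cases according to $S \cap \{u,v\}$. The easy cases are $S \cap \{u,v\} = \emptyset$ (where $G'[S] = G[S]$ and the adjacencies of every $w \notin S$ into $S$ are unaffected by removing $uv$, since both endpoints lie outside $S$) and $\{u,v\} \subseteq S$ (where $uv \in E$ combined with $\Delta(G[S]) \le 1$ force $u$ and $v$ to be matched to each other in $G[S]$; after deleting $uv$ they become unmatched in $G'[S]$, and any certificate of a $w \notin S$ that previously used the matched vertex $u$ or $v$ transfers, via the twin relation $N(u)\setminus\{v\} = N(v)\setminus\{u\}$, to $w$ now witnessing both unmatched vertices $u$ and $v$ in $G'[S]$; certificates through other matched vertices or through two unmatched vertices are plainly unaffected).

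The main obstacle is the case $|S \cap \{u,v\}| = 1$, say $u \in S$ and $v \notin S$. Here I would first establish the structural claim that $u$ must already be matched in $G[S]$: if $u$ were unmatched, then maximality of $S$ in $G$ would force $v$ to see either a matched vertex $y \in S$ or two unmatched vertices $a, b \in S$; in the former sub-case the twin relation gives $y \in N_G(u)$ (since $y \ne u, v$), so $y$ would have at least two neighbors in $G[S]$ (its match partner and $u$), contradicting $\Delta(G[S]) \le 1$; in the latter sub-case at least one of $a, b$ differs from $u$, and the twin relation makes that vertex a neighbor of $u$ in $G[S]$, contradicting that $u$ is unmatched. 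Consequently $u$ is matched in $G[S]$ to some $x \in S \setminus \{v\}$; the twin relation yields $x \in N_G(v)$, and since $vx \ne uv$ this edge survives in $G'$, so $v$ continues to witness the matched vertex $x$ in $G'[S]$. Every other $w \notin S$ has identical adjacencies to $S$ in $G$ and in $G'$, so its certificate transfers verbatim. This closes all cases and shows the identity map is a well-defined injection, yielding $\m_1(G) \le \m_1(G \setminus uv)$.
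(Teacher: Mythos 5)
Your proof is correct and follows essentially the same route as the paper's: map each maximal generalized induced matching $S$ of $G$ to itself in $G\setminus uv$, show that whenever $u\in S$ it must be matched (the paper gets this slightly more directly by noting $S\cup\{v\}$ would otherwise be a generalized induced matching), and use the twin relation to transfer the maximality certificates of Observation~\ref{obs:seesTwo}. Your case analysis on $S\cap\{u,v\}$ is just a finer-grained version of the paper's two cases, so no substantive difference.
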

The lemma states that disconnecting twin vertices in a graph cannot decrease the number of maximal generalized induced matchings. 
\begin{proof}
	Let $S$ be a maximal generalized induced matching in $G$. 
	It suffices to prove that $S$ induces a maximal generalized matching in $G \setminus uv$. 
	If $S$ does not contain $u$ nor $v$, then $S$ is a maximal generalized induced matching in $G\setminus uv$. 
	Without loss of generality, $u \in S$. 
	Note that $u$ is matched: if not, then $S \cup v$ is a generalized induced matching since $u$ and $v$ are twins.

	\textit{Case 1:} The neighbor of $u$ in $S$ is $w$, $w\neq v$.
	Then, $S$ is a generalized induced matching in $G\setminus uv$ with the
	same number of edges. 
	$S$ is still maximal since $u$ and $v$ are twins.

	\textit{Case 2:} The neighbor of $u$ in $S$ is $v$. 
	Then, $S$ is an induced matching in $G\setminus uv$
	with one less edge than the generalized induced matching $S$ in $G$. 
	By Observation~\ref{obs:seesTwo} and since $u$ and $v$ are twins, $S$ is maximal.
\end{proof}

\begin{lemma}\label{lemma:domination}
		Let $S$ be a maximal generalized induced matching in $G$ and let $v\in S$.
		Then, either $|N(u) \setminus  N[v]| > 0$ for all $u \in N(v)$ or $v$ is matched in $S$. 
\end{lemma}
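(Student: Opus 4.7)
The plan is to argue by contradiction: assume $v \in S$ is unmatched and that some neighbor $u \in N(v)$ satisfies $N(u) \subseteq N[v]$, and derive an inconsistency with the maximality of $S$ via Observation~\ref{obs:seesTwo}. The crux is that the hypothesis $N(u) \subseteq N[v]$ forces every witness vertex in $S$ that ``justifies'' $u$'s exclusion from $S$ to lie in $N[v]$, which then forces $v$ to be matched—contradicting the assumption.

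First I would observe that $u \notin S$. Indeed, $uv$ is an edge, so if $u$ were in $S$ then $v$ would have $u$ as a neighbor inside $S$, making $v$ matched. Hence $u \in V(G) \setminus S$. Since $S$ is a maximal generalized induced matching, the converse direction of Observation~\ref{obs:seesTwo} (applied in reverse, or rather the forward direction directly) tells us that $u$ is adjacent either to a matched vertex of $S$ or to at least two unmatched vertices of $S$.

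In the first case, let $w \in S$ be a matched vertex adjacent to $u$. Then $w \in N(u) \subseteq N[v]$, so either $w = v$ or $w \in N(v)$. The option $w = v$ is impossible because $v$ is unmatched while $w$ is matched. Thus $w \in N(v)$, which means $vw$ is an edge with $v, w \in S$, so $v$ is matched—contradiction. In the second case, let $w_1, w_2 \in S$ be two unmatched vertices adjacent to $u$. Both lie in $N(u) \subseteq N[v]$. If some $w_i = v$, then the other vertex $w_j \in S$ with $j \neq i$ is distinct from $v$ and lies in $N[v]$, and since $w_j \neq v$ we have $w_j \in N(v)$; then $vw_j$ is an edge inside $S$, making $v$ matched—contradiction. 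Otherwise both $w_1, w_2 \in N(v)$; but then $vw_1$ is an edge of $G[S]$ and $w_1$ acquires $v \in S$ as a neighbor, contradicting that $w_1$ is unmatched (and also forcing $v$ to have degree $\geq 1$ in $G[S]$, i.e., matched).

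The main obstacle, if any, is simply keeping the case analysis clean—distinguishing $w = v$ from $w \neq v$ in the matched case, and handling the subcase $w_i = v$ carefully in the unmatched case. Everything else is a direct application of the definitions of ``matched'' and ``unmatched'' combined with Observation~\ref{obs:seesTwo}; no additional machinery is needed.
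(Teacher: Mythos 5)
Your proof is correct and follows essentially the same route as the paper: both derive a contradiction with the maximality of $S$ from the observation that, since $v$ is unmatched, no neighbor of $v$ lies in $S$, so the hypothesis $N(u)\subseteq N[v]$ forces $N(u)\cap S\subseteq\{v\}$. The paper phrases this by directly exhibiting $S\cup\{u\}$ as a larger generalized induced matching, whereas you unpack the equivalent condition via Observation~\ref{obs:seesTwo}; the two are interchangeable.
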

In a graph $G$, we say that a vertex $v$ \emph{dominates}  a vertex $u$ if $N(u) \subseteq N(v)$.
The lemma states that if $v$ dominates a vertex in its neighborhood, then $v$ is always matched in a maximal generalized induced matching.
\begin{proof}
	For the sake of contradiction suppose that $v$ is unmatched and 
	that for some $w \in N(v)$ it holds $N(w) \subseteq N[v]$.
	As $v$ is unmatched it holds that $u \not \in S$, for all $u \in N(v)$.
	Since $N(w) \subseteq N[v]$ it follows $S\cup w$  is also a generalized induced matching.
	A contradiction with maximality of $S$.  
\end{proof}

\begin{proposition}\label{prop:baseCase}
	Let $G$ be a graph and suppose that $n = |V(G)| \le 5$. 
	Then, $\m_1(G) \le 10^{n/5}$.
\end{proposition}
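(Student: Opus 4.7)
The plan is a finite case analysis over graphs with $n \le 5$. For $n \le 2$ the inequality is immediate: $\Delta(G) \le n-1 \le 1$ forces $V(G)$ itself to be a generalized induced matching, and it is the unique maximal one (singletons always extend), so $\m_1(G) = 1 \le 10^{n/5}$. For $n = 3$ I would split on $|E(G)|$: if $|E(G)| \le 1$ then $V(G)$ itself is a gim (the degree sequence lies in $\{0,1\}$) and is the unique maximal gim; if $|E(G)| \ge 2$ then the whole vertex set has maximum degree at least $2$, so every maximal gim is a pair, and there are at most $\binom{3}{2}=3$ pairs. In all cases $\m_1(G) \le 3 \le 10^{3/5}$.

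For $n \in \{4, 5\}$ I would first reduce to connected graphs using Lemma~\ref{lemma:disconnected}: if $G$ has components $U_1, \ldots, U_k$ with $k \ge 2$ then
\[
  \m_1(G) \,=\, \prod_{i=1}^k \m_1(G[U_i]) \,\le\, \prod_{i=1}^k 10^{|U_i|/5} \,=\, 10^{n/5},
\]
where the middle inequality uses the already verified cases on each $|U_i| < n$. For connected $G$ on $4$ vertices there are only six isomorphism classes ($P_4$, $C_4$, $K_{1,3}$, the paw, the diamond $K_4{-}e$, and $K_4$), and a direct enumeration gives $\m_1(G) \le 6 \le 10^{4/5}$, with equality attained on $C_4$, $K_4{-}e$, and $K_4$ (all $\binom{4}{2}$ pairs are maximal).

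The main obstacle is the $n=5$ case, since brute enumeration of all $21$ connected isomorphism classes is unwieldy. To prune the list I would combine Lemma~\ref{lemma:twin} (disconnecting a twin edge does not decrease $\m_1$, so we may pass to a twin-free reduct on the same vertex set) with Lemma~\ref{lemma:domination} (a vertex dominating one of its neighbors must be matched in every maximal gim containing it) and the matched/unmatched dichotomy of Observation~\ref{obs:seesTwo}. Together these partition the maximal gims of each remaining graph into a few families: edgeless maximal gims (which are maximal independent sets of $G$); maximal gims containing a prescribed matched edge $uv$ (controlled by maximal independent sets in the common non-neighborhood of $\{u,v\}$); and a short residue coming from vertices that dominate a neighbor and hence are forced to be matched. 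Each family is bounded by a maximal-independent-set count in a smaller graph, and a direct case check then confirms $\m_1(G) \le 10$, with the bound attained by $K_5$ (as well as $K_5$ minus any matching of size at most two).
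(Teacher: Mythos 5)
Your argument is correct in outline but takes a completely different, and much heavier, route than the paper. The paper's proof is a two-line application of Sperner's theorem: the collection of maximal generalized induced matchings is an antichain in the subset lattice ${\cal P}(V(G))$ (no maximal set contains another), so its size is at most $\binom{n}{\lfloor n/2\rfloor}$, which is $6<10^{4/5}$ for $n=4$ and $10=10^{5/5}$ for $n=5$; the cases $n\le 3$ are dismissed as trivial. This bound is structure-free --- it needs nothing about which subsets are gims, only their pairwise incomparability --- and it immediately explains why your extremal examples ($C_4$, $K_4-e$, $K_4$, $K_5$, $K_5$ minus a small matching) sit exactly at the middle layer of the lattice. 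Your exhaustive verification for $n\le 4$ is accurate (I checked the six connected $4$-vertex classes and the reduction via Lemma~\ref{lemma:disconnected}), so up to that point you have a valid, if laborious, elementary proof.

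The weak point is $n=5$: you correctly identify that brute enumeration of the $21$ connected isomorphism classes is the obstacle, and you propose a pruning scheme via Lemmas~\ref{lemma:twin} and~\ref{lemma:domination} and Observation~\ref{obs:seesTwo}, but you never actually execute it --- ``a direct case check then confirms $\m_1(G)\le 10$'' is an assertion, not an argument, and it is precisely where essentially all of the work lives. Since the statement concerns finitely many graphs the plan would succeed if carried out, so this is a gap of execution rather than of idea; but as written the proof of the $n=5$ case is missing. I would strongly recommend replacing the whole case analysis with the antichain observation, which handles $n=4$ and $n=5$ uniformly and eliminates the need for any enumeration.
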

For missing definitions in the following proof we refer to~\cite{davey2002introduction}.
\begin{proof}
	The proposition is trivial to check for $n\le 3$.
	Given a ground set $A$ denote with ${\cal P} (A)$ the family of all subsets of $A$.
	${\cal P} (A)$ admits a natural partial ordering by the inclusion. 

	We observe that the set of all maximal generalized induced matchings is an antichain in ${\cal P}(V(G))$ (or any other type of maximal sets).
	If $n = 4$ (resp. $n=5$), then the maximum size of an antichain in ${\cal P}(V(G))$ is $\binom{4}{2} = 6 < 10^{4/5}$ (resp. $\binom{5}{2} = 10 = 10^{5/5}$). 
\end{proof}

\begin{theorem}\label{thm:generalizedInducedMatching2}
  $\m_1(n) \le 10^{n/5} \le 1.585^{n}$.
\end{theorem}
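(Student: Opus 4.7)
We proceed by strong induction on $n$, with the base case $n \le 5$ supplied by Proposition~\ref{prop:baseCase}. For the inductive step ($n \ge 6$), Lemma~\ref{lemma:disconnected} handles disconnected graphs multiplicatively (since $10^{n_1/5} \cdot 10^{n_2/5} = 10^{n/5}$), so we may assume $G$ is connected. Lemma~\ref{lemma:twin} lets us delete any edge between twin vertices without decreasing $\m_1$, which eventually disconnects $G$ or reduces to a simpler instance we handle by induction; Lemma~\ref{lemma:domination} likewise eliminates configurations where a vertex dominates some neighbor, ensuring that in a ``generic'' structure any vertex can be left unmatched in some maximal matching.

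The main recursion is obtained by fixing a vertex $v$ of minimum degree $\delta$ and classifying each maximal generalized induced matching $S$ by the role of $v$. (A) If $v \in S$ is unmatched, then $S \cap N[v] = \{v\}$ and $S \setminus \{v\}$ is a maximal generalized induced matching in $G \setminus N[v]$, giving $\m_1(n - \delta - 1)$ contributions. (B) If $v \in S$ is matched to some $u \in N(v)$, then $S \setminus \{v, u\}$ is maximal in $G \setminus (N[v] \cup N[u])$, summing over $u \in N(v)$. (C1) If $v \notin S$ and some $u \in N(v) \cap S$ is matched to $u' \in N(u) \setminus \{v\}$, then $S \setminus \{u, u'\}$ is maximal in $G \setminus (N[u] \cup N[u'])$, summing over the pairs $(u, u')$. (C2) If $v \notin S$ and every neighbor of $v$ lying in $S$ is unmatched, Observation~\ref{obs:seesTwo} forces at least two unmatched neighbors $w_1, w_2 \in N(v) \cap S$. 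Adding the case contributions produces an inequality of the form $\m_1(n) \le \sum_i a_i\, \m_1(n - b_i)$; induction closes the argument provided $\sum_i a_i \cdot 10^{-b_i/5} \le 1$ in each sub-configuration, with equality dictated by the extremal graph $K_5$ (whose $5$ vertices carry exactly $10$ maximal generalized induced matchings).

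The principal obstacle is case (C2) together with the low-minimum-degree regime $\delta \in \{1, 2, 3\}$, for which the naive recursion is too weak. In (C2) the unmatched vertices $w_1, w_2$ do not confine $S$ to the complement of a small fixed set; we exploit the fact that $w_1 w_2 \notin E(G)$ (otherwise they would be matched) together with Lemma~\ref{lemma:domination} and the twin-free reduction to identify at least five vertices determined by the triple $(v, w_1, w_2)$ that can be removed, recovering an acceptable recurrence. In the low-degree regime, the naive sums over $N(v)$ in cases (B) and (C1) lose too much; a finer case split on the structure of $N[v] \cup N(N(v))$, together with the special structure of small neighborhoods, yields the extra savings necessary to achieve $10^{n/5}$. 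The combined bookkeeping across these sub-cases is precisely the ``significantly more extensive case analysis'' alluded to in the paper.
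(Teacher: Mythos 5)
Your framework matches the paper's in outline (induction on $n$, base case from Proposition~\ref{prop:baseCase}, reduction to connected graphs via Lemma~\ref{lemma:disconnected}, and a branching recursion closed by checking $\sum_i a_i\,10^{-b_i/5}\le 1$), but the substance of this theorem \emph{is} the case analysis, and your proposal does not carry it out. You defer the entire low-degree regime and case (C2) to an unspecified ``finer case split,'' which is exactly where the difficulty lives: the paper's proof (Cases A--F, organized by \emph{maximum} rather than minimum degree, with five subcases for cubic graphs alone and bespoke refinements of individual branches of the recursion in several of them) is several pages of verified recurrences. A statement of the form ``the combined bookkeeping yields the extra savings'' is a research plan, not a proof.

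Beyond the omission, at least one concrete claim is false. In case (C2) you assert that the twin-free and domination reductions guarantee ``at least five vertices determined by $(v,w_1,w_2)$ that can be removed.'' Take $v$ of degree $2$ with non-adjacent neighbors $w_1,w_2$, each of degree $2$, sharing a second common neighbor $x$ (a $C_4$ attached to the rest of the graph at $x$). Then $N[w_1,w_2]=\{v,w_1,w_2,x\}$ has only four vertices; Lemma~\ref{lemma:twin} does not apply since $w_1w_2\notin E(G)$, and no vertex dominates a neighbor here, so Lemma~\ref{lemma:domination} gives nothing. Whether your recurrence still closes in such configurations is precisely the verification you skipped. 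You also misstate the roles of the two auxiliary lemmas: Lemma~\ref{lemma:domination} does not ``eliminate configurations'' --- it shows that a vertex dominating a neighbor is forced to be matched, which lets you \emph{drop} the unmatched branch from the recursion; and Lemma~\ref{lemma:twin} only deletes an edge (not a vertex), so ``reduces to a simpler instance we handle by induction'' needs a secondary induction on the number of edges to be well-founded, and in the paper it is invoked locally inside specific subcases rather than as a global preprocessing step.
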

\begin{proof}
	We prove the result by induction on the number of vertices.
	Let $G$ be a graph on $n$ vertices. 
	Proposition~\ref{prop:baseCase} is the base case of the induction and allows us to assume that $|V(G)| \ge 6$.
	By Lemma~\ref{lemma:disconnected} we assume that $G$ is connected.
	Observation~\ref{obs:seesTwo} is used throughout the proof implicitly.
	We will consider several different cases based on the degree of vertices in $G$. 

	\textbf{Case A: there exists a vertex $v\in  V(G)$ with $\deg(v) =1$}.
	Denote with $w$ the unique neighbor of $v$ in $G$. 
	Since $S$ is maximal, it contains at least one of $v, w$. 
	Moreover, if $v\not \in S$, then $w$ is matched in $S$ by Lemma~\ref{lemma:domination}. 
	Thus, either $w \in S$ and $w$ is matched in $S$ or $w\not\in S$ and $v \in S$.

	If $w\in S$ and $w$ is matched then there is $u\in N(w)$ such that $u \in S$.
	In this case, $S\setminus \{w,u\}$ is a maximal generalized induced matching in $G\setminus N[w,u]$.
	If $v\in S$ and $w\not \in S$,
	then $S\setminus v$ is a maximal generalized induced matching in $G\setminus \{v, w\} = G\setminus N[v]$.
	Combining the two, we obtain the following recursive upper bound on $\m_1(G)$:
	\begin{equation*}
		\m_1(G) \le \m_1(G\setminus \{v,w\}) + \sum_{u\in N(w)} \m_1(G\setminus N[w, u]) \,.
	\end{equation*}
	As $|N[w,u]| \ge |N[w]| \ge \deg(w)+1$ we have $\m_1(G\setminus N[w,u]) \le \m_1(n-\deg(w)-1)$ and 
	$$
		\m_1(G) \le \m_1(n-2) + \deg(w) \m_1(n-\deg(w)-1) \,.
	$$
	By induction we have 
	\begin{equation*}
		\m_1(G) \le 10^{n/5} \left( 10^{-2/5} + \deg(w) \cdot 10^{-(\deg(w)+1)/5}\right)\,.
	\end{equation*}
	Since $10^{-2/5} + x \cdot 10^{-(x+1)/5} < 1$, for all $x \ge 1$, this case is proved. 
	Note that we proved a stronger statement:
	if $\deg(v) =1$ for $v\in V(G)$, then the number of maximal generalized induced matchings is 
	$10^{n/5} \left( 10^{-2/5} + \deg(w) \cdot 10^{-(\deg(w)+1)/5}\right) < 10^{n/5} \cdot \left( 10^{-2/5} + 2\cdot 10^{-3/5}\right)$ 
	where $w$ is the neighbor of $v$ in $G$.
	We will use the stronger statement in one of the remaining cases. 

	\medskip
	\smallskip \noindent \textbf{Recursive bound}
	We give a generic recursive bound for $\m_1(G)$ that will be useful for several cases. 
	Let $v$  be an arbitrary vertex.
	For a maximal generalized induced matching $S$ we have the following possibilities. 
	\begin{itemize}
		\item $S$ does not contain $v$. 
			Then, $S$ is also a maximal generalized induced matching in $G\setminus v$. 
			Hence, the number of maximal generalized induced matchings $S$ in $G$ that do not contain $v$ is at most $\m_1(G\setminus v)$.
		\item $S$ contains $v$ and $v$ is unmatched in $S$. 
			Then, $S\setminus v$ is a maximal generalized induced matching in $G\setminus N[v]$.
			The number of such sets $S$ in $G$ is at most $\m_1(G\setminus N[v])$.
		\item $S$ contains $v$ and $v$ is matched to $w$ in $S$. 
			Then, $S\setminus \{v, w\}$ is a maximal generalized induced matching in $G\setminus N[v,w]$.
			The number of such sets $S$ in $G$ is at most $\sum_{w\in N(v)} \m_1(G\setminus N[v,w])$.
	\end{itemize}
	We obtain the following bound on $\m_1(G)$:
	$$
	\begin{aligned}
		\m_1(G) &\le \m_1(G\setminus v) + \m_1(G\setminus N[v]) + \sum_{w \in N(v)}\m_1(G\setminus N[v,w])
	\end{aligned}
	$$
	By Lemma~\ref{lemma:domination}, if there is a vertex $w\in N(v)$ such that $N(w) \subseteq N[v]$,
	then we cannot have $v\in S$ and $v$ unmatched. 
	Therefore, in this case the stronger bound applies:
	$$
	\begin{aligned}
		\m_1(G) &\le \m_1(G\setminus v) + \sum_{w \in N(v)}\m_1(G\setminus N[v,w])
	\end{aligned}
	$$

	\textbf{Case B: $\Delta(G) \ge 6$.}
	Let $v$ be a vertex of degree at least $6$.
	Since $|N[v,w]| \ge |N[v]| = \deg(v) + 1$, 
	it follows that $\m_1(G\setminus N[v,w])\le \m_1(G\setminus N[v]) \le \m_1(n-\deg(v)-1)$.
	Using the previous in the (weaker) recursive bound gives
	$$
	\begin{aligned}
		\m_1(G) &\le \m_1(n-1) + (\deg(v)+1) \cdot \m_1(n-\deg(v) -1)\,. 
	\end{aligned}
	$$
	By induction and since $10^{-1/5} + (x + 1) \cdot 10^{-(x+1)/5}\le 1$ for $x\ge 6$ we have
	$$
	\m_1(G) \le 10^{n/5} \left( 10^{-1/5} + (\deg(v) + 1) \cdot 10^{-(\deg(v)+1)/5}\right) < 10^{n/5}\,.
	$$

	\textbf{Case C: $\Delta(G) = 5$.} 
	Let $v$ be a vertex of degree $5$. 
	Since $G$ is connected, either $|V(G)| = 6$ or there is a vertex $w \in N(v)$ such that $|N(w) \setminus N(v)| \ge 1$. 
	
	If $|V(G)| = 6$, then a maximal generalized induced matching is either an edge $vw$ for some $w\in N(v)$ or 
	a maximal generalized induced matching in $G\setminus v$. 
	Hence, by Proposition~\ref{prop:baseCase} we have $\m_1(G) \le 5 + 10^{5/5} < 10^{6/5}$.
	
	For the rest of this case we assume that $|V(G)| > 6$.
	Consequently, there is a vertex $w \in N(v)$ such that $|N(w) \setminus N(v)| \ge 1$.
	For the four vertices $u \in N(v)\setminus w$ we use the same bound as before $|N[u,v]| \ge |N[v]| = \deg(v) + 1 = 6$.
	Since $|N(w)\setminus N(v)| \ge 1$, we have a stronger bound $|N[v,w]|\ge |N[v]|+1 = \deg(v) + 2$. 
	Thus, $\m_1(G\setminus N[v,w]) \le \m_1(n-\deg(v) -2 )$.
	By the (weaker) recursive bound and induction we have
	$$
	\begin{aligned}
		\m_1(G) &\le \m_1(n-1) + \m_1 (n-6) + 4 \cdot  \m_1(n-6) + \m_1(n-7) \\
		&\le 10^{n/5} \left( 10^{-1/5} + 5 \cdot 10^{-6/5} + 10^{-7/5} \right) < 10^{n/5} \,.
	\end{aligned}
	$$

	\textbf{Case D: $\Delta(G) = 4$.}
	Let $v$ be a vertex of degree $4$. 
	We consider two subcases. 
	In the first case we assume that each $w\in N(v)$ has a neighbor outside $N[v]$.
	Otherwise, for some $w\in N(v)$ it holds $N[w] \subseteq N[v]$ -- the second case. 

	\textit{Case D.1: For all $w \in N(v)$ it holds $|N(w) \setminus  N[v]|\ge 1$.}
	Therefore, $|N[v,w]|\ge |N[v]| + 1 = \deg(v) + 2 = 6$. 
	By the (weaker) recursive bound and induction we have
	$$
	\begin{aligned}
		\m_1(G) &\le \m_1(n-1) + \m_1(n-5) + 4\cdot \m_1(n-6)\\
		& \le 10^{n/5} \left(10^{-1/5} + 10^{-5/5} + 4\cdot 10^{-6/5} \right) < 10^{n/5}\,.
	\end{aligned}
	$$

	\textit{Case D.2: For some $w\in N(v)$ we have $N(w)\subseteq N[v]$.}
	Since $|V(G)| \ge 6$ and since $G$ is connected, 
	for some $u \in N(v)$ we have $|N(u)\setminus N[v]|\ge 1$ and thus $|N[v,u]| \ge |N[v]| + 1 = \deg(v) + 2 = 6$. 
	Combining it with the (stronger) recursive bound, and by induction gives
	$$
	\begin{aligned}
		\m_1(G) &\le \m_1(n-1) + 3\cdot \m_1(n-5) + \m_1(n-6) \\
		&\le 10^{n/5} \left(10^{-1/5} + 3\cdot 10^{-5/5} + 10^{-6/5} \right) < 10^{n/5} \,.
	\end{aligned}
	$$

	\textbf{Case E: $\Delta(G) = 3$.} 
	Let $v$ be a vertex of degree $3$ and denote with $w_1, w_2, w_3$ its neighbors.
	Since $G$ is connected and $|V(G)|\ge 6$ at least one $w_i$ has a neighbor outside of $N[v]$.
	Moreover, by case \textbf{A} there are no vertices of degree $1$ in $G$.

	We consider five subcases. 
	In the first three the cases, at least one of $w_1, w_2, w_3$ has degree $2$ in $G$.
	In the last two, $\deg(w_i) = 3$ for every $i \in[3]$.

	\textit{Case E.1: $\deg(w_3) = 2$, $N(w_3) \subseteq N[v]$ and for $i\in[2]$ it holds $|N(w_i) \setminus N[v]| \ge 1$ .}
	For $i\in [2]$ we have $|N[v, w_i]| \ge |N[v]| + 1 \ge \deg(v) + 2  = 5$. 
	Moreover, $|N[v, w_3]| = \deg(v) + 1 = 4$.
	By the (stronger) recursive bound, and induction we have 
	$$
	\begin{aligned}
		\m_1(G) &\le \m_1(n-1) + 2\cdot \m_1(n-5) + \m_1(n-4) \\
		&\le  10^{n/5} \left(10^{-1/5} +  2\cdot 10^{-5/5}  + 10^{-4/5}\right) < 10^{n/5}  \,.
	\end{aligned}
	$$

	\textit{Case E.2: $\deg(w_3) = 2$, for $i\in \{2, 3\}$  it holds $N(w_i) \subseteq N[v]$, and $|N(w_1) \setminus N[v]|\ge 1$.}
	Since $\Delta(G) =3$ vertex $w_1$ can be adjacent to at most one of $w_2, w_3$. 
	If $w_3$ is adjacent to $w_1$, then $\deg(w_2) = 1$.
	Hence, $w_3$ in non-adjacent to $w_1$ and $w_3$ is adjacent to $w_2$.
	By Lemma~\ref{lemma:twin} we assume that $w_2$ and $w_3$ are not twins. 
	Hence, $w_2$ is adjacent to $w_1$. 

	We use a refined version of the strong recursive bound.
	In particular, we refine the term corresponding to the case where $v\not \in S$.
	If $v\not \in S$, then by maximality at least one of the following cases holds:
	\begin{itemize}
		\item $w_1, w_3 \in S$ and $w_1$ is unmatched,
		\item $w_1, w_3 \in S$ and $w_1$ is matched to its neighbor $t$ with $t\in S\setminus N[v]$, 
		\item $w_1, w_2 \in S$,
		\item $w_2, w_3 \in S$. 
	\end{itemize}
	We obtain the bound
	$$
	\begin{aligned}
		\m_1(G) &\le \m_1(G\setminus N[w_1, w_3]) + \sum_{t\in N(w_1)\setminus N[v]} \m_1(G\setminus N[w_1, t, w_3]) + \m_1(G\setminus N[w_1, w_2])
		\\ &+ \m_1(G\setminus N[w_2, w_3]) + \sum_{i\in [3]}\m_1(G\setminus N[v,w_i])\,.
	\end{aligned}
	$$
	Note that $|N(w_1)\setminus N[v]| = 1$ as $v,
w_2 \in N(w_1) \cap N[v]$. 
	Since $|N[w_1, t, w_3]| \ge |N[w_1, w_3]| = |N[w_1, w_2]| = |N[w_1, v]| = 5$ we have
	$$
	\begin{aligned}
		\m_1(G) &\le \m_1(n-5) + \m_1(n-5) + \m_1(n-5)
		\\ &+ \m_1(n-4) + 2\cdot \m_1(n-4) + \m_1(n-5)\,.
	\end{aligned}
	$$
	By induction we have
	$
	\displaystyle	\m_1(G) \le 10^{n/5} \left(4\cdot 10^{-5/5} + 3\cdot 10^{-4/5}\right)\,.
	$
	The case is proved since $4\cdot 10^{-1} + 3\cdot 10^{-4/5} < 1$.

	\textit{Case E.3: $\deg(w_3) = 2$ and for all $i\in [3]$ it holds $|N(w_i) \setminus N[v]| \ge 1$.}
	Let $u_3$ be the neighbor of $w_3$ different than $v$. 
	Note that $u_3 \not \in N[v]$ by the assumption.
	By case \textbf{A} it holds $\deg(u_3) \in \{2,3\}$.
	We use a refined version of the weaker recursive bound. 
	More precisely, we refine the term $\m_1(G\setminus N[v])$ corresponding to $v\in S$ and $v$ unmatched. 
	If $v\in S$ and $v$ unmatched, then by maximality it follows that either $u_3 \in S$ and $u_3$ unmatched 
	or for  some $t\in N(u_3) \setminus N[v]$ we have $u_3t\in S$.
	The recursion becomes
	$$
	\begin{aligned}
		\m_1(G) &\le \m_1(G\setminus v) \\
		&+ \m_1(G\setminus N[v, u_3]) + \sum_{t\in N(u_3)\setminus N[v]} \m_1(G\setminus N[v, u_3, t]) \\
		&+ \sum_{i\in [3]}\m_1(G\setminus N[v,w_i])\,.
	\end{aligned}
	$$
	Let $x= |N(u_3)\setminus N[v]|$, i.e, $x$ is the number of vertices that are adjacent to $u_3$ but not $v$.
	Since $\deg(u_3) \in \{2,3 \}$ it follows that $x\in \{0,1,2\}$.
	By definition of $x$ and since $\{v, w_1, w_2, w_3, u_3\}\subseteq N[v,u_3]$ it holds that $|N[v,u_3]| \ge x+5$.
	Thus, $| N[v,u_3,t]| \ge x+5$ as well. The induction gives:
	$$
		\m_1(G\setminus N[v, u_3]) + \sum_{t\in N(u_3)\setminus N[v]} \m_1(G\setminus N[v, u_3, t]) 
		\le (x+1) \cdot \m_1(n-x-5) \le (x+1)\cdot 10^{(n-x-5)/5}		\,.
	$$
	By induction and since $|N[v,w_i]| \ge 5$ for all $i\in [3]$ we also have
	$$
		\sum_{i \in [3]}\m_1(G\setminus N[v,w_i]) \le 3\cdot 10^{(n-5)/5}\,.
	$$
	Note that the degree of $w_3$ in graph $G\setminus v$ is $1$. 
	Hence, we can apply the following bound given in case \textbf{A}:
	$$
	\m_1(G \setminus v) \le 10^{(n-1)/5} \left( 10^{-2/5} + 2\cdot 10^{-3/5}\right)
	$$
	Combining the above three we have:
	$$
		\m_1(G) \le 10^{(n-1)/5} \left( 10^{-2/5} + 2 \cdot 10^{-3/5}\right) + (x+1) \cdot 10^{(n-x-5)/5} + 3\cdot 10^{(n-5)/5}\,.
	$$
	For all three possible values $\{0,1,2\}$ for $x$ the last is bounded by $10^{n/5}$.
	Therefore, case \textit{E.3} is proved.

	\medskip
	Consider the previous three subcases. 
	The vertex $v$ is an arbitrary vertex of a connected graph $G$.
	In other words, one of the three subcases can be applied  as soon as there is a vertex in $G$ of degree $3$, 
	with a neighbor of degree $2$.
	Therefore, by case \textbf{A} and since $G$ is connected, we may assume for the rest of the proof that $G$ is a cubic graph, 
	i.e., the degree of every vertex in $G$ is $3$.

	Let $v$ be a vertex in the cubic graph $G$. 
	By Lemma~\ref{lemma:twin} for every $w_i \in N[v]$ it holds $|N(w_i)\setminus N[v]| \ge 1$. 
	We will consider the following two possibilities: $G[N(v)]$ is an independent set or $G[N(v)]$ contains exactly one edge.

	\textit{Case E.4: $G[N(v)]$ is an independent set.}
	Equivalently, for all $i\in [3]$ it holds $|N(w_i) \setminus N[v]| = 2$.
	Since $|N[v, w_i]| = 6$ for all $i\in [3]$, we have by the (weak) recursive bound:  
	$$
 	\m_1(G) \le \m_1(n-1) + \m_1(n-4) + 3\cdot \m_1(n-6)\,.
	$$
	Then, by induction
	$
 	\displaystyle	\m_1(G) \le 10^{n/5} \left( 10^{-1/5} + 10^{-4/5} + 3\cdot 10^{-6/5} \right) < 10^{n/5}\,.
	$

	\textit{Case E.5: $G[N(v)]$ contains exactly one edge.}
	Without loss of generality assume that $w_2w_3 \in E(G)$.
	Let $u_2$ be the neighbor of $w_2$ outside of $N[v]$, and analogously define $u_3$. 
	By Lemma~\ref{lemma:twin} we assume that $u_2 \neq u_3$. 
	Denote with $a,b$ the neighbors of $w_1$. (It is possible that $\{a, b\} = \{u_2, u_3\}$.)

	We again use a refined version of the weak recursive bound. 
	We refine the term $\m_1(G\setminus v)$ corresponding to the case when $v\not \in S$. 
	By maximality, at least one of the following holds:
	\begin{itemize}
		\item for one of $i\in \{2,3\}$ we have $w_1, w_i \in S$ and both $w_1$ and  $w_i$ are unmatched. 
		Then $S\setminus \{w_1, w_i\}$ is a maximal generalized induced matching in $G\setminus N[w_1, w_i]$.
		\item $w_2, w_3 \in S$. Then $S\setminus \{w_2, w_3\}$ is a maximal generalized induced matching in $G\setminus N[w_2, w_3]$.
		\item For some $i \in [3]$ and some $t\in N(w_i)\setminus N[v]$ it holds $w_i, t\in S$. 
		Then $S\setminus \{w_i, t\}$ is a maximal generalized induced matching in $G\setminus N[w_i, t]$.
	\end{itemize}

		From the above
		$$
		\begin{aligned}
				\m_1(G) \le &+ \sum_{i \in \{2, 3\}} \m_1(G\setminus N[w_1, w_i]) \\
				&+ \m_1(G\setminus N[w_2, w_3])\\
				&+ \sum_{t\in \{a,b\}} \m_1(G\setminus N[w_1, t]) + \m_1(G\setminus N[w_2, u_2]) + \m_1(G\setminus N[w_3, u_3])\\
				&+ \m_1(G\setminus N[v]) + \sum_{i \in [3]} \m_1(G\setminus N[v, w_i]) \,.
		\end{aligned}
		$$
		Since $G$ is cubic and by the adjacencies in $G$ the following equalities and inequalities hold: 
		\begin{itemize}
			\item $|N[w_1, w_2]|\ge 6$ and $|N[w_1, w_3]|\ge 6$; 
			\item $|N[w_2, w_3]| = 5$;
			\item $|N[w_1, a]| \ge 5$ and $|N[w_1, b]|\ge 5$; $N[w_2, u_2] \ge 6$; $|N[w_3, u_3]|\ge 6$;
			\item $|N[v]| = 4$; $|N[v, w_1]| = 6$, $|N[v, w_2]| = 5$ and $|N[v, w_3]| = 5$. 
		\end{itemize}
		Hence
		$$
			\m_1(G) \le \m_1(n-4) + 5\cdot \m_1(n-5) + 5\cdot \m_1(n-6)\,.
		$$
		By induction we get
		$
		\displaystyle	\m_1(G) \le 10^{n/5} \left( 10^{-4/5} + 5\cdot 10^{-5/5} + 5\cdot 10^{-6/5}\right) < 10^{n/5}\,.
		$

	\textbf{Case F: $\Delta(G) = 2$.}
	Since $G$ is connected and by case \textbf{A} it follows that $G$ is a cycle.
	Let $v_2\in V(G)$ and denote with $v_1$ and $v_3$ its neighbors.
	We use a refined recursive bound 
	where we refine the term $\m_1(G\setminus v_2)$ corresponding to the maximal generalized induced matchings that do not contain $v_2$.
	If $v_2 \not \in S$ then at least one of the following holds
	\begin{itemize}
	  	\item $v_1, v_3 \in S$ and both $v_1$ and $v_3$ are unmatched. 
	  	\item $v_1 \in S$ and $v_1$ is matched to its neighbor $v_0$, where $v_0 \neq v_2$. 
	  	\item $v_3 \in S$ and $v_3$ is matched to its neighbor $v_4$, where $v_4 \neq v_2$. 
	 \end{itemize}  
	The bound arises
	$$
	\begin{aligned}
		\m_1(G) &\le \m_1(G\setminus N[v_1, v_3]) + \m_1(G\setminus N[v_1, v_0]) + \m_1(G\setminus N[v_3, v_4]) \\
		&+ \m_1(G\setminus N[v_2]) + \m_1(G\setminus N[v_1, v_2]) + \m_1(G\setminus N[v_2, v_3])
	\end{aligned}
	$$ 
	Since $|V(G)|\ge 6$ it follows that $v_0 \neq v_4$. 
	Similarly as before, by induction we obtain
	$$
		\m_1(G) \le 10^{n/5} \left( 10^{-5/5} + 2\cdot 10^{-4/5} + 10^{-3/5} + 2\cdot 10^{-4/5} \right) < 10^{n/5}\,.
	$$
	This completes the proof.	
\end{proof}

\section{Bounded co-degeneracy}
\label{section:degeneracyAppendix}

In this section, we give FPT algorithms for enumerating maximal subgraph with bounded co-degeneracy in a $c$-closed graph. 
As before, we work in the complement of  a $c$-closed graph and look for the maximal subgraphs of bounded degeneracy.
The proof of the FPT bound uses the same lemmas as in the case of bounded treewidth. 
Namely, it is easy to show that  $d$-degenerate graph is either a $4d$-star or admits a good $(4d, 2d)$-partition.
The FPT bound then follows by Lemmas~\ref{lemma:numberOfKstars} and~\ref{lemma:kEdges}.

In comparison with bounded treewidth, we are able to make exponential savings in running time with respect to $c$
since there is a non-trivial combinatorial bound, and there is a polynomial delay algorithm for listing maximal $d$-degenerate subgraphs.

\smallskip \noindent \textbf{Combinatorial bound}
Recall that the maximum the number of maximal $d$-degenerate subgraph with in an arbitrary $\numOfV$-vertex graph is denoted by 
$\dd_d(\numOfV)$.
Pilipczuk and Pilipczuk~\cite{pilipczuk2012finding} show that for every $d$ there is a constant $\gamma_d<2$ such that $\dd_d(\numOfV) \le \gamma_d^{\numOfV}$.
Forests are exactly $1$-degenerate graphs, so we have $\dd_1(\numOfV) = \f(\numOfV) \le 1.8638^{\numOfV}$.

\smallskip \noindent \textbf{FPT bound}
To give the algorithm, we use the same two lemmas
 as in the case of subgraphs of bounded treewidth in the complement of a $c$-closed graph. 
In the case of bounded degeneracy, the dichotomy theorem is easier to prove but it comes at the expense of worse upper bounds.
To make the saving in the base of the exponent we give a stronger version of Lemma~\ref{lemma:kEdges}.

\begin{lemma}\label{lemma:kEdges+}
	Let $G$ be the complement of a $c$-closed graph and let $\ell$ and $k$ be fixed integers. 
	The number of maximal subsets $S \subseteq V$ for which graph $G[S]$ is $d$-degenerate and admits a good $(\ell, k)$-partition,  
	is at most $O(n^{\ell+2k}\cdot \dd_d(kc))$.
\end{lemma}
\begin{proof}
	Let $H$ be a maximal $d$-degenerate subgraph of $G$ that let 
	$e_1, \dots, e_k$  and $A_0, \dots, A_k$ be the edges and sets defining a good $(\ell, k)$-partition of $H$.
	To prove the lemma, it suffices to show that the number of induced subgraphs that admit a good $(\ell, k)$-partition 
	with the same edges $e_1, \dots, e_k$ and the same set $A_0$ is bounded by $\dd_d(\ell + k(c+1))$.
	Namely, $O(n^{\ell+2k}\cdot \dd_d(\ell + kc+k )) 
	\le O(n^{\ell+2k}\cdot 2^{\ell + k} \cdot \dd_d(kc)) 
	= O(n^{\ell+2k}\cdot\dd_d(kc))$.

	Denote with $U$ the vertices of $G$ that are neither incident to the edges $e_1, \dots, e_k$ nor in the set $A_0$, 
	i.e., $U = V(G)\setminus (A_0 \cup_{i=1}^k e_i)$. 
	By definition of a good $(\ell, k)$-partition, for any induced subgraph with a good $(\ell, k)$-partition 
	$e_1, \dots, e_k$ and $A_0, A'_1, \dots A'_k$ it holds $A'_i\subseteq U \setminus N_G(e_i)$ for each $i \in [k]$.
	Since $G$ is complement of a $c$-closed graph, it follows that $|U\setminus N(e_i)| \le c-1$ for each $i\in[k]$. 
	Hence, $H$ is also a maximal subgraph in graph induced by $A_0 \cup \left(\bigcup_{i = 1}^k (e_i \cup (U\setminus N[e_i])) \right)$.
	As $\left|A_0 \cup \left(\bigcup_{i = 1}^k (e_i \cup (U\setminus N[e_i]))\right)\right| \le \ell + k(c+1)$ 
	we conclude that there are at most $\dd_d(\ell + k(c+1))$ subgraph $H$ with desired properties.
	The lemma follows.
\end{proof}

To prove that every $d$-degenerate graph is either a $4d$-star or admits a good $(4d, 2d)$-partition we need an easy proposition.

\begin{proposition}\label{prop:uniformDegree-edges}
	If $H$ is a graph of degeneracy at most $d$ then every induced subgraph $H'$ it holds $|E(H')| \le d|V(H')|$.
\end{proposition}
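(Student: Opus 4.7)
The plan is to prove this by a simple induction on $|V(H')|$, exploiting the defining property of degeneracy directly.

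First I would note that any induced subgraph of a $d$-degenerate graph is itself $d$-degenerate: the definition of degeneracy is a statement about \emph{all} induced subgraphs of $H$, and induced subgraphs of $H'$ are also induced subgraphs of $H$. Consequently, $H'$ contains a vertex $v$ of degree at most $d$ in $H'$.

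Next, I would induct on $n' := |V(H')|$. The base case $n' = 0$ (or $n' = 1$) is immediate since there are no edges. For the inductive step, pick a vertex $v \in V(H')$ with $\deg_{H'}(v) \le d$, which exists by the previous paragraph. Removing $v$ deletes exactly $\deg_{H'}(v) \le d$ edges, so
\[
|E(H')| \;=\; |E(H' \setminus v)| + \deg_{H'}(v) \;\le\; d(n'-1) + d \;=\; d n',
\]
where the first inequality uses the induction hypothesis on $H' \setminus v$ (which is also an induced subgraph of $H$, hence $d$-degenerate).

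There is no real obstacle here; the only thing to be careful about is to invoke the hereditary nature of degeneracy so that one may keep removing low-degree vertices. The proposition will then be plugged into the subsequent dichotomy that every $d$-degenerate graph is either a $4d$-star or admits a good $(4d,2d)$-partition, where the linear edge bound is used to ensure enough ``separating'' structure exists.
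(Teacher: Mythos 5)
Your proof is correct; the paper states this proposition without proof (as a standard fact), and your induction on $|V(H')|$, repeatedly deleting a vertex of degree at most $d$ and noting that degeneracy is hereditary, is exactly the canonical argument one would supply. No issues.
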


\begin{lemma}\label{lemma:uniformDegreePartition}
	Let $H$ be a graph of degeneracy at most $d$. 
	Then $H$ is either a $4d$-star or $H$ admits a good $(4d, 2d)$-partition. 
\end{lemma}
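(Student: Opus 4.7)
The plan is to show that if $H$ is not a $4d$-star, then $H$ must contain $2d$ pairwise vertex-disjoint edges, and that using degeneracy we can bound the number of ``bad'' vertices that cannot be placed into any part $A_i$.

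\textbf{Step 1: Locate $2d$ disjoint edges.} I would first recall the standard fact that the endpoints of any maximal matching form a vertex cover. Hence, if every matching in $H$ has size at most $2d$, then $H$ has a vertex cover of size at most $4d$, so $H$ is a $4d$-star and we are done. Otherwise, $H$ contains a matching of size at least $2d+1$; fix any $2d$ of its edges $e_1,\dots,e_{2d}$. These are pairwise vertex-disjoint and we have $|\bigcup_{i=1}^{2d} e_i| = 4d$.

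\textbf{Step 2: Bound the bad set using degeneracy.} Call $v \in V(H)\setminus \bigcup_i e_i$ \emph{bad} if $v$ has at least one neighbor in each $e_i$, and let $X$ denote the set of bad vertices. I want to show $|X|\le 4d$, which is where degeneracy (via Proposition~\ref{prop:uniformDegree-edges}) enters. Consider the induced subgraph $H':=H[X\cup \bigcup_i e_i]$, which has $|V(H')|=4d+|X|$ vertices. Counting its edges: the $2d$ edges $e_1,\dots,e_{2d}$ are present, and every $v\in X$ contributes at least $2d$ edges to $\bigcup_i e_i$ (one per $e_i$, by definition of $X$). These edges are all distinct, so
\[
|E(H')| \ge 2d + 2d\,|X|.
\]
On the other hand, since $H$ is $d$-degenerate so is $H'$, giving $|E(H')|\le d\,|V(H')| = d(4d+|X|)$ by Proposition~\ref{prop:uniformDegree-edges}. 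Combining the two bounds yields $2d + 2d|X|\le 4d^2 + d|X|$, i.e., $|X|\le 4d-2\le 4d$.

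\textbf{Step 3: Build the partition.} Set $A_0 := X$ (so $|A_0|\le 4d$). Every vertex $v \in V(H)\setminus(\bigcup_i e_i \cup X)$ is, by definition, non-adjacent to every vertex of some edge $e_i$; assign $v$ to such an $A_i$ (breaking ties arbitrarily). By construction, $N_H(e_i)\cap A_i=\emptyset$ for each $i$, and $\{A_0,A_1,\dots,A_{2d}\}$ partitions $V(H)\setminus\bigcup_i e_i$, so $e_1,\dots,e_{2d}$ together with $A_0,\dots,A_{2d}$ form a good $(4d,2d)$-partition of $H$.

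The main obstacle is Step 2: the inequality is tight enough that one really must use degeneracy rather than, say, a naive counting argument. The clean trick is that each vertex of $X$ forces $2d$ edges into a set of only $4d$ fixed vertices, and degeneracy bounds the edge density of this bipartite-like subgraph linearly in $|X|$, which forces $|X|=O(d)$.
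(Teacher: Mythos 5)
Your proof is correct and follows essentially the same route as the paper's: a maximum-matching dichotomy (small matching gives a vertex cover of size $4d$, hence a $4d$-star), then an edge-counting argument via the degeneracy bound $|E(H')|\le d|V(H')|$ to show that the set of vertices adjacent to all $2d$ fixed matching edges has size at most $4d$, which becomes $A_0$. The only (immaterial) difference is that you also count the $2d$ matching edges themselves, yielding the marginally sharper bound $|X|\le 4d-2$.
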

\begin{proof}
	Let $M$ be the maximum size matching in $H$.
	If $|M| \le 2d$ then the vertices incident with the edges in $M$ form a vertex cover of size at most $4d$.
	In this case, $H$ is trivially a $4d$-star.
	So assume that $|M|\ge 2d$ and consider  $2d$ arbitrary edges from $M$, say $e_1, \dots, e_{2d}$.

	In order to prove the lemma it suffices to show that $|\cap_{i=1}^{2d} N(e_i) | \le 4d$:
	namely, we set $A_0 = \bigcap_{i=1}^{2d} N(e_i)$; then each $v \in V(H)\setminus (A_0 \bigcup_{i=1}^{2d}e_i)$ 
	is non-adjacent to at least one edge $e_i$ and we can assign $v$ to $A_i$.

	Denote with $\ell = |\cap_{i=1}^{2d} N(e_i)|$. Our goal is to show that $\ell \le 4d$.
	Let $Z =\cup_{i=1}^{2d} e_i$ be the set of vertices incident to the edges $e_1, \dots, e_{2d}$. 
	It holds $|Z| = 4d$.
	As each vertex in $\cap_{i=1}^{2d} N(e_i)$ is adjacent to every edge $e_i$, 
	the number of edges in $G[A_0 \cup Z]$ is at least $\ell \cdot 2d$.
	Since $H$ has degeneracy at most $d$, by Proposition~\ref{prop:uniformDegree-edges} it holds 
	$$
		2\ell d \le |E(G[A_0 \cup Z])| \le d|A_0 \cup Z| = 
		d(\ell+4d) = d\ell + 4d^2 \,.
	$$
	Hence, $d\ell \le 4d^2$ and $\ell \le 4d$.
\end{proof}

We are ready to prove the theorem.
\FPTboundeDegeneracy*
\begin{proof}
By Lemma~\ref{lemma:uniformDegreePartition} any maximal induced subgraph with degeneracy at most $d$ is either a $4d$-star (and hence a proper $4d+1$-star)
or admits a good $(4d, 2d)$-partition.
By Lemma~\ref{lemma:numberOfKstars} the number of proper $4d+1$-starts with maximal set of tails is $2n^{4d+1}\m_0(c-1)$.
By Lemma~\ref{lemma:kEdges+} there are at most $O(n^{8d} \dd_d(2dc))$ subgraphs that admit a good $(4d, 2d)$-partition in $G$. 
\end{proof}
Since the minimum degeneracy of $K_{a,b}$ is $\min \{a,b\}$, Example~\ref{exampleTW} shows that the dependency $n^t$ is necessary.

\smallskip \noindent \textbf{Enumeration}
Maximal $d$-degenerate subgraphs can be listed in time $O(mn^{d+2})$ per  maximal subgraph~\cite{conte2019proximity}.
We obtain the following corollary.
\enumerationDegeneracy*

\section{Bounded local co-treewidth}
\label{section:localTreeWidth}
We use the lemmas and ideas present above for the subgraphs of bounded treewidth 
to show that the similar results hold for the subgraphs of bounded \emph{local} treewidth.
First, we give a corollary of Lemma~\ref{lemma:kEdges} and then we recall the definition of locally bounded treewidth.

\begin{corollary}\label{cor:diam6}
	Let $G$ be a complement of a $c$-closed graph. 
	Then, the number of subsets $S$, for which either 
		\begin{itemize}
			\item $G[S]$ contains at least two non-trivial connected components, or
			\item the diameter of some connected component in $G[S]$ is at least $6$
		\end{itemize}
	 is bounded by $n^4\cdot 2^{2(c-1)}$.
\end{corollary}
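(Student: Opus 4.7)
My plan is to deduce the corollary directly from Lemma~\ref{lemma:kEdges} applied with $\ell = 0$ and $k = 2$, which yields the bound $n^{0+4} \cdot 2^{2(c-1)} = n^4 \cdot 2^{2(c-1)}$ exactly. It therefore suffices to show that any subset $S$ falling into either of the two listed cases induces a subgraph $G[S]$ admitting a good $(0, 2)$-partition. Unwinding the definition (with $A_0 = \emptyset$), this amounts to exhibiting two edges $e_1, e_2$ in $G[S]$ such that no vertex of $V(G[S]) \setminus (e_1 \cup e_2)$ is simultaneously adjacent to both of them; the remaining vertices can then be partitioned by placing every vertex not in $N_{G[S]}(e_1)$ into $A_1$ and every vertex in $N_{G[S]}(e_1)$ (which, by assumption, must avoid $N_{G[S]}(e_2)$) into $A_2$.

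For the first bullet, where $G[S]$ has two distinct non-trivial components $C_1, C_2$, I would pick any edge $e_i$ inside $C_i$. Any neighbor of $e_i$ in $G[S]$ must lie in $C_i$, and since $C_1 \cap C_2 = \emptyset$, no vertex can be adjacent to both edges; the required partition therefore exists, and $G[S]$ admits a good $(0,2)$-partition.

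For the second bullet, let $v_0, v_1, \ldots, v_k$ be a shortest path (inside $G[S]$) between two vertices realizing a diameter of at least $6$ in some connected component, so $k \geq 6$, and set $e_1 = v_0 v_1$ and $e_2 = v_{k-1} v_k$, which are vertex-disjoint because $k \geq 6$. A hypothetical common neighbor $w$ of $e_1$ and $e_2$ would be adjacent to one of $\{v_0, v_1\}$ and to one of $\{v_{k-1}, v_k\}$, yielding a walk from $v_0$ to $v_k$ of length at most $4$ through $w$ and hence $d_{G[S]}(v_0, v_k) \leq 4$. This contradicts the shortest-path assumption, so no such $w$ exists and $G[S]$ again admits a good $(0,2)$-partition.

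I do not anticipate any real obstacle: Lemma~\ref{lemma:kEdges} carries essentially all the weight, and each case reduces to a short structural observation (disjoint components in the first case, a distance argument along the shortest path in the second). The only minor care required is to verify that the two chosen edges are vertex-disjoint, which is automatic in both cases. Combining the two cases and applying the lemma gives the claimed bound $n^4 \cdot 2^{2(c-1)}$.
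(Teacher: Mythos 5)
Your proof is correct and follows essentially the same route as the paper: both cases are reduced to exhibiting a good $(0,2)$-partition and then invoking Lemma~\ref{lemma:kEdges}, with the second case handled by taking the two extreme edges of a shortest path realizing the large diameter. Your version just spells out the distance computation (a common neighbor would force $d(v_0,v_k)\le 4$) slightly more explicitly than the paper does.
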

\begin{proof}[Proof of Corollary~\ref{cor:diam6}]
We show that in both cases $G[S]$ admits a good $(0,2)$-partition. The corollary then follows by Lemma~\ref{lemma:kEdges}.
If $G[S]$ contains two non-trivial connected components then $G[S]$ clearly admits a good $(0, 2)$-partition. 

Suppose that $G[S]$ contains two vertices $v,u$ in the same component that are at distance at least $6$. 
Since $u,v$ are in the same connected component there are different vertices $u', v'\in S\setminus \{u,v\}$ such that $uu', vv' \in E(G)$ (say the neighbors of $u, v$ on the shortest $u - v$ path).
As $u$ and $v$ are at distance at least $6$ it follows that $N(uu') \cap N(vv') \cap S = \emptyset$. 
In other words, any vertex is either non-adjacent to $uu'$ or $vv'$. 
Thus, $G[S]$ admits a good $(0, 2)$-partition with edges $e_1 = uu'$ and $e_2 = vv'$.
\end{proof}

Informally, the corollary states that if we are are counting (finding) sparse subgraphs in the complement of a $c$-closed graph,
we only need to worry about the subgraphs with a small diameter.

\smallskip \noindent \textbf{Local treewidth}
The local treewidth of a graph $G=(V,E)$ is the function $\ltw^G: \N \to \N$ that
associates with every $r\in \N$ the maximal treewidth of an $r$-neighborhood
in $G$, see~\cite{grohe2003local,nevsetvril2008structural}.
More formally, the $r$-neighborhood $N_r(v)$ of a vertex $v\in V$
is the set of all vertices $u \in V$ at distance at most $r$ from $v$.
Then
$$
\ltw^G(r) := \max \{ \tw(G[N_r(v)]) : v\in V\}\,.
$$
We say that a class of graph $\CC$ has \emph{bounded} local treewidth, 
if  there is a function $f: \N \to \N$ such that for all $G\in \CC$ and $r\in \N$ it holds $\ltw^G(r) \le f(r)$.
Suppose that $\CC$ is a class of graphs with locally bounded treewidth for a function $f$ with $f(1) = f(2) = f(3) = f(4) = f(5) = t$. 
(Equality is needed to ensure that the class $\CC$ contains all proper $t+1$-stars. 
By assuming other conditions, we can relax this assumption.)
We obtain the following theorem.

\begin{theorem}\label{thm:localTW}
	Let $\CC$ be a class of graphs of bounded local treewidth as defined above.  
	Let $G$ be a complement of a $c$-closed graph. 
	Then there are at most $O(n^{t+4}4^{c-1})$ maximal induced subgraphs of $G$ that are in $\CC$. 
\end{theorem}
\begin{proof}
	Let $S$ be a subset of $V(G)$ such that $G[S] \in \CC$.
	By Corollary~\ref{cor:diam6}, there are at most $n^4\cdot 2^{2c-2}$ subsets $S$ for which $G[S]$ has diameter more than $5$.
	On the other hand, if $\diam(G[S])\le 5$, then $\tw(G[S]) \le t$ since $\CC$ has locally bounded treewidth.
	In this case, we can prove the theorem exactly the same as Theorem~\ref{thm:boundedTW} by using Claim~\ref{claim:separatorOrStar}.
\end{proof}

\section{Co-forests}
\label{section:forest}

We consider the dense subgraphs $G[S]$ with at most $|S|-1$ non-edges. 
As we have seen in Example~\ref{example1} if we do not require any structural assumption on the non-edges,
then we cannot hope to enumerate such dense subgraphs in FPT time with respect to $c$.
On the contrary, if we require that the non-edges form a forest then we can get a positive result.
Intuitively, the difference is that in the latter case the non-edges are uniformly distributed within the dense subgraph 
while in the former case the non-edges can be concentrated in a small but not-so-dense part of the subgraph.

\smallskip
For brevity, we are working with maximal forests in the complement of $c$-closed graphs.
We use existing results for the combinatorial bound and enumeration.
An FPT bound follows by separately counting stars and the forests that are not stars. 
We show that counting stars reduces to counting independent sets.
If a forest is not a star then it either contains a path on four vertices, or two non-trivial components.
We denote a path on $4$ vertices by $P_4$.
To count the forests containing a $P_4$ or two non-trivial components we use the following observation. 
Any such forest contains two edges $e, f$ with the property that any other vertex 
is non-adjacent to either the endpoints of $e$ or the endpoints of $f$.
We use the complementary $c$-closure to observe that any forest admitting two such two edges, 
is contained in a set of at most $2c-2$ vertices -- the non-neighbors of $e$ and $f$.

\smallskip \noindent \textbf{Combinatorial bound}
As counting maximal stars reduces to counting maximal independent set, in this case we use $\m_0(\numOfV)$ and the Moon-Moser theorem.
For counting maximal forests different than stars, we use $\f(\numOfV)$ -- the maximum number of maximal induced forests in a graph on $\numOfV$ vertices.
Currently, the best bound is $\f(\numOfV) \le 1.8638^{\numOfV}$~\cite{fomin2008minimum}.
It is known that $\m_0(\numOfV) \le 3^{\numOfV/3} < 105^{\numOfV/10} \le \f(\numOfV)$.

\smallskip \noindent \textbf{FPT bound}
We start by counting the maximal stars in the complement of a $c$-closed graph. 
A non-standard definition of a star is used: a star is a graph that can be obtained as a (vertex disjoint) union of a tree with diameter at most $2$ 
and an independent set.

\begin{lemma}[Stars]
\label{lemma:maximumNumOfStars}
	Let $G$ be a complement of a $c$-closed graph. 
	The number of maximal induced stars in $G$ is bounded by $n^3\cdot \m_0(c-1)$.
\end{lemma}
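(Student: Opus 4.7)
The plan is to reduce the enumeration of maximal induced stars in $G$ to the enumeration of maximal independent sets in slightly smaller complements of $c$-closed graphs, and then invoke Theorem~\ref{thm:cliquesIntro}.

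First I would observe that every nonempty maximal star $S$ admits a distinguished vertex $z \in S$ that can play the role of a \emph{center}: the unique vertex of maximum degree in the star-component when that component is $K_{1,k}$ with $k \ge 2$; an arbitrary endpoint of the edge when the star-component is a single edge; and an arbitrary vertex of $S$ when $S$ is edgeless (so the ``tree of diameter at most $2$'' degenerates to a single vertex and the rest is independent). In all cases there is at least one valid choice of $z$.

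Next I would establish the key claim: for any maximal star $S$ in $G$ and any choice of center $z \in S$, the set $J := S \setminus \{z\}$ is a maximal independent set in the induced subgraph $G \setminus \{z\}$. Independence of $J$ follows directly from the star structure, since leaves are pairwise non-adjacent and the isolated part is non-adjacent to every other vertex of $J$. For maximality, I would argue by contradiction: if some $w \in V(G) \setminus S$ had no neighbor in $J$, then $S \cup \{w\}$ would still be a star---$w$ simply joins the isolated part if $w$ is non-adjacent to $z$, or becomes a new leaf attached to $z$ if $w$ is adjacent to $z$---contradicting the maximality of $S$. This step crucially uses that $S$ is maximal as a \emph{star} rather than merely as an independent set, which is what rules out the extension in the edgeless case and is the main (mild) subtlety of the proof.

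Finally, for each $z \in V(G)$, the induced subgraph $G \setminus \{z\}$ is itself the complement of a $c$-closed graph (the $c$-closure property is preserved under vertex deletion in the $c$-closed complement), so Theorem~\ref{thm:cliquesIntro} gives at most $(n-1)^2 \m_0(c-1)$ maximal independent sets in $G \setminus \{z\}$. The assignment $S \mapsto (z, S \setminus \{z\})$ is an injection from the collection of maximal stars (each equipped with a fixed choice of center) to the set of pairs $(z, J)$ with $J$ a maximal independent set of $G \setminus \{z\}$, because $S = \{z\} \cup J$ is recovered from the pair. Summing over the $n$ choices of $z$ therefore bounds the number of maximal stars by $n(n-1)^2 \m_0(c-1) \le n^3 \m_0(c-1)$, as desired.
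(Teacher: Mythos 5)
Your proposal is correct and follows essentially the same route as the paper: fix a (choice of) center $z$, observe that $S\setminus\{z\}$ must be a maximal independent set in $G\setminus\{z\}$ (using that any vertex with no neighbor in $S\setminus\{z\}$ could be added to the star regardless of its adjacency to $z$), and then apply the bound on maximal independent sets from Theorem~\ref{thm:cliquesIntro} over all $n$ choices of $z$. Your treatment of the degenerate center cases and the explicit injection argument are just slightly more detailed versions of what the paper does.
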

\begin{proof}
	For a vertex $v\in V(G)$, we show that the number of maximal stars for which $v$ is a center is bounded by $n^2 \cdot \m_0(c-1)$.
	A \emph{center} of a star is any vertex with maximum degree (center is unique whenever there is a vertex with degree at least $2$). 

	Let $S$ be a set inducing a star such that $v$ is a center of $G[S]$.
	By our definition of a star, we have that $S\setminus v$ is an independent set.
	Moreover, $S\setminus v$ is a maximal independent set in graph $G\setminus v$: 
	suppose not and let $u \in V(G)\setminus v$ be a vertex such that 
	$S \cup u \setminus v$ is an independent set in $G\setminus v$, then $S\cup u$ induces a star in $G$ regardless of the adjacency of $u$ and $v$.
	By Theorem~\ref{thm:cliquesIntro}, the number of maximal independent sets in $G\setminus v$ is at most $(n-1)^2 \m_0(c-1)$. 
	The lemma follows.
\end{proof}

We show that the dependence on $n^3$ cannot be improved unless the bound in Theorem~\ref{thm:cliquesIntro} is improved. 
\begin{example}
	Adding an isolated vertex to a $c$-closed graph produces a larger $c$-closed graph. 
	Equivalently, adding a universal vertex (adjacent to all other vertices) to the complement of a $c$-closed graph 
	produces a larger co-$c$-closed graph. 
	
	Let $G$ be the complement of a $c$-closed graph on $\frac{2}{3}n$ vertices. 
	Denote with $G^+$ the graph obtained by adding $\frac{n}{3}$ universal vertices to $G$. 
	The number of maximal induced stars in $G^+$ is at least $\frac{n}{3}$ times larger 
	than the number of maximal independent sets in $G$ as any maximal independent set in $G$ gives rise to $\frac{n}{3}$
	maximal stars in $G^+$.
	Thus, if we start with a graph $G$ having $M$ maximal independent sets 
	we can build graph $G^+$ with $\frac{n}{3}  \cdot M$ maximal induced starts.
\end{example}

We proceed the give an upper bound on the number of forests that contain a $P_4$, 
and the number of forests that contain two non-trivial components.

\begin{lemma}\label{lemma:forestsP4}
	Let $G$ be the complement of a $c$-closed graph. Then the number of maximal induced forests in $G$
	\begin{enumerate}
		\item\label{twoComponents}  with at least two non-trivial components, is at most $n^2 \cdot (c-1)^2 \f(2c-4)$;
		\item\label{P4} containing a $P_4$, is at most $n^3 \cdot (c-1) \f(2c-3)$.
	\end{enumerate}
\end{lemma}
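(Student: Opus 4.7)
The plan is to exploit the complementary $c$-closure property---for any edge $uv$ in $G$, the set $V(G)\setminus N_G[u,v]$ has at most $c-1$ vertices---in order to localize each maximal forest to a small union of non-neighborhoods, and then invoke the combinatorial bound $\f$. The common idea, as informally announced in the text, is to identify inside the forest $G[S]$ a pair of edges whose two (complementary) non-neighborhoods together contain $S$.

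For part~\ref{twoComponents}, I fix an ordered edge $e_1=u_1v_1$ from one non-trivial component and an ordered edge $e_2=u_2v_2$ from a second non-trivial component of $G[S]$. Since the endpoints of $e_2$ live in a different component of $G[S]$ than $u_1,v_1$, they are non-adjacent in $G$ to both $u_1$ and $v_1$, so $\{u_2,v_2\}\subseteq V(G)\setminus N_G[e_1]$, which by complementary $c$-closure has at most $c-1$ vertices. This gives at most $n^2\cdot (c-1)^2$ ordered pairs $(e_1,e_2)$. The driving observation is that every vertex $w\in S$ belongs to at most one connected component of $G[S]$, and so $w$ is non-adjacent to both endpoints of at least one of $e_1$, $e_2$ (namely any edge that lies in a different component from $w$); the endpoints themselves fall into the other edge's non-neighborhood. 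Hence $S\subseteq W:=(V(G)\setminus N_G[e_1])\cup(V(G)\setminus N_G[e_2])$, a set of size at most $2(c-1)$. Because $S$ is a maximal induced forest in $G$ and $W\supseteq S$, it is also maximal in $G[W]$; accounting for the four fixed vertices $u_1,v_1,u_2,v_2$ (already lying in $W$) inside the ambient set bounds the number of completions by $\f(2c-4)$.

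For part~\ref{P4}, I fix an induced $P_4$ on vertices $a,b,c,d$ with edges $ab,bc,cd$ and non-edges $ac,ad,bd$. The ordered triple $(a,b,c)$ with $ab,bc\in E$ and $ac\notin E$ has at most $n^3$ choices; the fourth vertex $d$ must be non-adjacent to both $a$ and $b$ (from the induced-$P_4$ condition), so $d\in V(G)\setminus N_G[\{a,b\}]$, which has at most $c-1$ elements. The key structural claim is that any $w\in S\setminus\{a,b,c,d\}$ is adjacent to at most one vertex of the $P_4$: any two such adjacencies would close a cycle together with (a sub-path of) $a$--$b$--$c$--$d$, contradicting that $G[S]$ is a forest. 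Consequently $w$ is non-adjacent to both endpoints of at least one of $ab$ or $cd$, so $w\in(V(G)\setminus N_G[ab])\cup(V(G)\setminus N_G[cd])$, again a set of size at most $2(c-1)$. The same maximality-transfer to $G[W\cup\{b,c\}]$, combined with the fact that $a,d$ already occupy slots in this ambient set, yields at most $\f(2c-3)$ completions.

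The main obstacle is extracting the precise constants $\f(2c-4)$ and $\f(2c-3)$ rather than the looser $\f(2c-2)$ one obtains by plugging $|W|\le 2(c-1)$ directly into $\f$. This requires tracking which of the fixed vertices $u_1,v_1,u_2,v_2$ (respectively $a,b,c,d$) already occupy slots in $V\setminus N_G[e_1]$ or $V\setminus N_G[e_2]$, so that they are not double-counted as ``free'' vertices available to the maximal forest inside the ambient set. Once this bookkeeping is carried out, the bound follows immediately from the definition of $\f$ applied to the induced subgraph on the relevant ambient set, together with the observation that maximality of $S$ in $G$ implies maximality in the restriction.
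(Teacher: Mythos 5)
Your overall strategy is exactly the paper's: fix two edges (respectively, an induced $P_4$), observe that every vertex of $S$ is non-adjacent to both endpoints of at least one of the two designated edges, localize $S$ to the union of the two complementary non-neighborhoods (each of size at most $c-1$ by the $c$-closure of $\overline{G}$), transfer maximality of $S$ to the induced subgraph on that union, and invoke $\f$. The counting of configurations ($n^2\cdot(c-1)^2$ ordered edge pairs, $n^3\cdot(c-1)$ choices of the $P_4$) and the structural claims (a vertex adjacent to two vertices of the $P_4$ closes a cycle; a vertex sees no endpoint of an edge in another component) are all correct and match the paper.

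However, there is a genuine gap precisely at the step you flag as ``the main obstacle,'' and the bookkeeping you describe does not close it. Knowing that the four (resp.\ four) fixed vertices ``already occupy slots'' in the ambient set $W$, $|W|\le 2c-2$ (resp.\ $\le 2c$), only yields the bound $\f(2c-2)$ (resp.\ $\f(2c)$), not $\f(2c-4)$ (resp.\ $\f(2c-3)$). You cannot simply delete the fixed vertices and count maximal forests in the remaining $\le 2c-6$ vertices: $S\setminus\{u_1,v_1,u_2,v_2\}$ need not be a \emph{maximal} forest in $G[W]\setminus\{u_1,v_1,u_2,v_2\}$, since a vertex might be addable there while creating a cycle through the fixed vertices in $G[W]$; and there is no general inequality bounding the number of maximal forests containing a prescribed $p$-set in an $N$-vertex graph by $\f(N-p)$. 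The paper's mechanism is different: first discard from $W$ every vertex adjacent to both endpoints of a forced edge (such a vertex would create a triangle, so it lies in no counted forest), then \emph{contract} the forced edges --- $e_1$ and $e_2$ each to a single vertex in part~\ref{twoComponents} (shrinking $4$ vertices to $2$, hence $2c-2\mapsto 2c-4$), and the whole path $ab,bc,cd$ to a single vertex in part~\ref{P4} ($4$ vertices to $1$, hence $2c\mapsto 2c-3$). One then checks that $S\mapsto (S\setminus\{\text{fixed}\})\cup\{\text{contracted vertices}\}$ maps the counted maximal forests injectively to maximal forests of the contracted graph, and applies $\f$ there. Without this (or an equivalent device) your argument proves the lemma only with the weaker constants $\f(2c-2)$ and $\f(2c)$, which still gives the qualitative FPT bound downstream but not the statement as written.
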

\begin{proof}
\eqref{twoComponents}
	Let $e=ab$ be an edge in $G$.
	We show that the number of maximal forests in $G$ with at least two non-trivial components 
	one of which contains $e$, is at most $(c-1)^2\cdot \f(2c-4)$.
	Any such forest contains 
	an edge $f=cd$ that is in a different connected component than $e$. 
	In particular, vertices $c$ and  $d$ are non-adjacent to $a,b$. 
	Therefore, the edge $f$ is contained in the set $V(G)\setminus N[a,b]$. See Figure~\ref{figure:foretsP4}.
	Since $G$ is the complement of a $c$-closed graph it holds $|V(G)\setminus N[a,b]| < c$.
	Thus, there are at most $(c-1)^2$ possibilities for an edge $f$.
	Fix such an edge $f$.
	To prove~\ref{twoComponents}, 
	it suffices to show that the number of maximal induced forests that contain edges $e$ and $f$ in different components 
	is at most $\f(2c-4)$.

	\begin{figure}[h]
	\centering
	\includegraphics[width = 0.8 \textwidth]{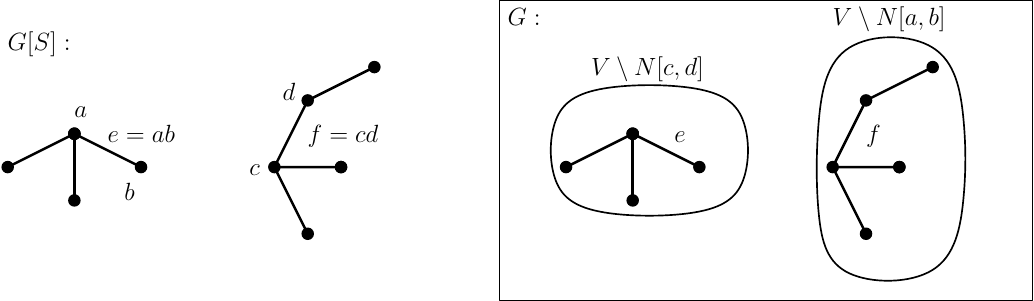}
	\caption{Proof of Lemma~\ref{lemma:forestsP4}. Left: $G[S]$ represents an induced forest with two non-trivial components and two designated edges $e$ and $f$.
	Right: depection of $G[S]$ within $G$. 
	As $G$ is the complement of a $c$-closed graph we have $|V(G)\setminus N[a,b]| \le c-1$ and $|V(G) \setminus N[c,d]| \le c-1$.}
	\label{figure:foretsP4}
	\end{figure}

	Let $S$ be a set inducing a maximal forest with edges $e$ and $f$ in different components. 
	Denote with $X$ the connected component containing $e$.
	Since $X$ is not in the same component as $f$ it follows that $X\subseteq V(G)\setminus N[c,d]$.
	Similarly, we have that $S\setminus X \subseteq V(G) \setminus N[a,b]$.
	Hence, $S$ is also a maximal induced forest in the graph induced by $(V(G)\setminus N[c,d]) \cup (V(G) \setminus N[a,b])$.
	As $G$ is the complement of a $c$-closed graph it follows that $|(V(G)\setminus N[c,d]) \cup (V(G) \setminus N[a,b])| \le c-1 + c-1$.
	At this point we could conclude that the number of such maximal sets $S$ is bounded by $\f(2c-2)$, 
	but we can do a bit better since we are only counting maximal induced forests that contain $e$ and $f$.

	Consider graph $H = G[ (V(G)\setminus N[c,d]) \cup (V(G) \setminus N[a,b]) ]$. 
	Any vertex that is adjacent to both $a,b$ or both $c,d$ cannot be in an induced forest containing $e$ and $f$, 
	so we assume that there are no such vertices in $H$.
	Contract the edges $e$ and $f$ in $H$ to obtain $H'$ and denote with $v_e$ (resp. $v_f$) the vertex obtained by contracting $e$ (resp. $f$).
	Then, for any set $S$ that induces a maximal forest containing $e$ and $f$ in $H$ we have that 
	$S\cup v_e \cup v_f \setminus \{a,b,c,d\}$ induces a maximal forest containing $v_e$ and $v_f$ in $H'$.
	Since $|V(H')| \le 2c-4$, the number of maximal induced forest that contain $e$ and $f$ is at most $\f(2c-4)$.  	  

	\medskip
	\eqref{P4}
	We proceed in a similar fashion to prove the second part of the lemma.
	Let $a,b,c$ be three vertices that induce a $P_3$ in $G$.
	We count the number of maximal induced forests containing $a,b,c$ and in which $c$ is not a leaf.
	Since $c$ is not a leaf, any such forest contains a vertex $d$ that is adjacent to $c$ but not to $a,b$.
	It is not hard to see that there are at most $c-1$ possible choices for $d$ since $d\in V(G)\setminus N[a,b]$,
	and $|V(G)\setminus N[a,b]| < c$.
	Let $S$ be a set inducing a maximal forest and containing $a,b,c,d$ in $G$.
	To prove the lemma we show that any such set $S$ also induces a maximal forest in a graph on $2c-3$ vertices.

	Consider an arbitrary vertex $v \in S$. 
	Since $G[S]$ is a tree containing a $P_4$ induced by $\{a,b,c,d\}$ it follows that $v$ is either non-adjacent to both $a$ and $b$, or non-adjacent to $c$ and $d$.
	Therefore, $S$ is also a maximal induced forest in the graph $H = G[ (V(G)\setminus N[a,b]) \cup (V(G)\setminus N[c,d]) \cup \{b,c\}]$.
	As $G$ is the complement of a $c$-closed graph there are at most $c-1$ vertices non-adjacent to both $a,b$ (including $d$)
	and at most $c-1$ vertices non-adjacent to $c,d$ (including $a$), i.e., $|V(H)| \le 2c$.
	Similarly as before, we are only interested in sets $S$ containing $a,b,c,d$.
	Let $H'$ be the graph obtained by contracting the edges $ab, bc, cd$ in $H$ into a vertex $u$.
	It is not hard to see that $S\cup u \setminus \{a,b,c,d\}$ induces a maximal induced forest in a graph $H'$, and the same holds for any set inducing a maximal forest that contains $\{a,b,c,d\}$.
	Since $|V(H')| \le 2c-3$, it follows that the number of maximal induced forests that contain $a,b,c$ is at most $(c-1)\cdot \f(2c-3)$.
\end{proof}
The main idea in the both parts of the above proof is finding two edges $e$ and $f$ 
that partition the rest of the graph into their respective non-neighborhoods.
This idea is generalized in Lemma~\ref{lemma:kEdges} and will be used in later proofs.

\begin{theorem}[Forests]
\label{thm:forests}
	Let $G$ be the complement of a $c$-closed graph. 
	The number of maximal induced forests in $G$ is at most 
	$$ n^3\m_0(c-1) + 2n^3 \cdot (c-1) \f(2c-3) \le 3 n^3 \cdot (c-1) \cdot 1.8638^{2c-3}\,.$$
\end{theorem}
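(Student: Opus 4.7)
My plan is to partition the maximal induced forests in $G$ by their global shape and then plug in the three lemmas already at hand. Recall that in Lemma~\ref{lemma:maximumNumOfStars} a \emph{star} means a vertex-disjoint union of a tree of diameter at most~$2$ and an independent set. I claim that every maximal induced forest $G[S]$ falls into exactly one of three classes: (i) $G[S]$ is a star; (ii) $G[S]$ has at least two non-trivial connected components; or (iii) $G[S]$ has exactly one non-trivial component, and that component is a tree of diameter at least~$3$. The trichotomy is clean since any tree of diameter at least~$3$ contains an induced $P_4$, while a forest with at most one non-trivial component of diameter at most~$2$ is a star by definition.

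I would then apply the existing lemmas one per class. Class~(i) is handled directly by Lemma~\ref{lemma:maximumNumOfStars}, yielding at most $n^3\m_0(c-1)$ such forests. Class~(ii) is handled by the first part of Lemma~\ref{lemma:forestsP4}, yielding at most $n^2(c-1)^2\f(2c-4)$ forests. For class~(iii), any induced $P_4$ inside $G[S]$ serves as a witness, so the second part of Lemma~\ref{lemma:forestsP4} gives at most $n^3(c-1)\f(2c-3)$ forests. Summing these three contributions already produces an FPT-type bound with parameter~$c$.

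The remaining step is cosmetic simplification to match the form in the theorem statement. Using the monotonicity $\f(2c-4)\le \f(2c-3)$ and the trivial $c-1\le n$, the bound from class~(ii) is dominated by the bound from class~(iii), so together they contribute at most $2n^3(c-1)\f(2c-3)$. Adding class~(i) gives $n^3\m_0(c-1)+2n^3(c-1)\f(2c-3)$, which is the first estimate in the theorem. For the closed-form estimate $\le 3n^3(c-1)\cdot 1.8638^{2c-3}$ I would invoke $\f(2c-3)\le 1.8638^{2c-3}$ from~\cite{fomin2008minimum} together with the elementary bound $\m_0(c-1)\le \f(2c-3)$ so that all three terms collapse under a common factor.

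I do not anticipate any real obstacle: the combinatorial content already lives in the supporting lemmas, whose proofs exploit complementary $c$-closure via a two-edge partitioning trick that confines a non-star forest to the union of the non-neighborhoods of two non-adjacent pairs (size $2c-3$ or $2c-4$ after contracting the distinguished edges). The only delicate point is the completeness of the (i)-(ii)-(iii) trichotomy, and that reduces to the one-line structural observation that a tree which is not a star of diameter $\le 2$ must have diameter $\ge 3$ and hence an induced $P_4$.
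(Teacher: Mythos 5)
Your proposal is correct and follows essentially the same route as the paper: the paper's proof also splits maximal forests into stars, forests with two non-trivial components, and forests containing a $P_4$, then applies Lemma~\ref{lemma:maximumNumOfStars} and the two parts of Lemma~\ref{lemma:forestsP4} and absorbs the $n^2(c-1)^2\f(2c-4)$ term into $n^3(c-1)\f(2c-3)$. Your write-up merely makes the trichotomy and the final arithmetic (including $3^{(c-1)/3}\le 1.8638^{2c-3}$, which requires $c>1$) slightly more explicit.
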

\begin{proof}
	A forest either contains at least two non-trivial components, a $P_4$, or is a star.
	By Lemma~\ref{lemma:maximumNumOfStars} there are at most $n^3 \m_0(c-1)$ maximal stars in $G$. 
	By Lemma~\ref{lemma:forestsP4} the number of maximal induced forests that 
	contain at least two non-trivial components or a $P_4$ is at most $2n^3 \cdot (c-1) \f(2c-3)$.
	The theorem follows since $3^{\frac{c-1}{3}} \le 1.443^{c-1} \le 1.8638^{2c-3}$ for any integer $c$ bigger than $1$.
\end{proof}

\smallskip \noindent \textbf{Enumeration}
By Theorem~\ref{thm:forests},
the polynomial delay algorithm for enumerating maximal induced forest~\cite{conte2019new,conte2019proximity} 
on the complement of $c$-closed graph runs in FPT time. 
As the enumeration algorithm takes $O(n^5)$ per maximal forest we obtain an FPT algorithm.
Similarly, as in Corollary~\ref{cor:fasterBoundedDegree}, 
we can obtain a better running time by applying the algorithm directly in the proof of the upper bound. 
We state the improved running time in the following corollary.

\begin{corollary}
	For $c$-closed graphs, there is an FPT algorithm running in time $O(n^3 \cdot 1.8638^{2c-3} \cdot c^6)$ for {\sc Enumerate co-forests.}
\end{corollary}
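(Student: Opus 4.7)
The plan is to turn the three-case structure underlying Theorem~\ref{thm:forests} into a three-branch enumeration algorithm, in the same spirit as Corollary~\ref{cor:fasterBoundedDegree}. Every maximal induced forest of $G$ is either a star, has at least two non-trivial connected components, or induces a $P_4$; it therefore suffices to enumerate each of these three families separately within the stated budget and then discard duplicates using a hash set at polynomial overhead.

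For the star branch I would iterate over the candidate center $v\in V(G)$ and, for each $v$, invoke the maximal-independent-set enumeration algorithm of Theorem~\ref{thm:cliquesIntro} on $G\setminus v$; by the proof of Lemma~\ref{lemma:maximumNumOfStars}, every maximal induced star of $G$ is produced as $\{v\}\cup I$ for some maximal independent set $I$ of $G\setminus v$. The total cost of this branch is $O(n\cdot(p(n,c)+3^{(c-1)/3}n^2))$, which is absorbed by the overall bound.

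For the other two branches I would follow the signatures already identified in the proof of Lemma~\ref{lemma:forestsP4}. In the two-component case, iterate over pairs $(e,f)$ of edges with $f\subseteq V(G)\setminus N[a,b]$, giving $O(n^2\cdot c^2)$ signatures by complementary $c$-closure; in the $P_4$ case, iterate over triples $(a,b,c)$ inducing a $P_3$ together with an extension $d\in V(G)\setminus N[a,b]$, giving $O(n^3\cdot c)$ signatures. For each signature I would build the $O(c)$-vertex contracted graph $H'$ exactly as in that proof, run the polynomial-delay enumeration of~\cite{conte2019new,conte2019proximity} on $H'$ to list all maximal induced forests of $H'$ containing the contracted vertices (at cost $O(c^5)$ per output), lift each output back through the contractions, and verify maximality in $G$ in polynomial time. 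Completeness is immediate from Lemma~\ref{lemma:forestsP4}, which shows that any maximal forest of $G$ of the corresponding type restricts to a maximal forest of $H'$.

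The bottleneck is the $P_4$ branch, whose cost is $O(n^3\cdot c\cdot \f(2c-3)\cdot c^5)=O(n^3\cdot 1.8638^{2c-3}\cdot c^6)$, matching the claim. The main obstacle is a mild bookkeeping one: the polynomial-delay algorithm on $H'$ may return forests that are maximal in $H'$ but not in $G$, so each candidate must be tested for global maximality in $G$ and duplicates across branches dropped; both tasks add only polynomial overhead per candidate and therefore do not disturb the leading-order running time.
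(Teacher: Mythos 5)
Your proposal is correct and follows exactly the route the paper intends: the paper only sketches this corollary by saying one should ``apply the enumeration algorithm directly in the proof of the upper bound,'' and your three-branch procedure (stars via Theorem~\ref{thm:cliquesIntro} on $G\setminus v$, and the two-component and $P_4$ cases via the $O(c)$-vertex contracted graphs from Lemma~\ref{lemma:forestsP4} with polynomial-delay enumeration, plus maximality checks and deduplication) is precisely that construction, with the $P_4$ branch correctly identified as the bottleneck giving $O(n^3\cdot 1.8638^{2c-3}\cdot c^6)$.
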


\bibliography{references}

\end{document}